\newtheorem{theorem}{Theorem}[section]
\newtheorem{proposition}[theorem]{Proposition}
\newtheorem{lemma}[theorem]{Lemma}
\newtheorem{corollary}[theorem]{Corollary}
\newtheorem{definition}[theorem]{Definition}
\newtheorem{ass}[theorem]{Assumption}
\numberwithin{equation}{section} 
\numberwithin{figure}{section}  
\newcommand\N{\mathbb{N}}
\newcommand\R{\mathbb{R}}
\newcommand\A{\mathbb{A}}
\newcommand\C{\mathbb{C}}
\newcommand\Z{\mathbb{Z}}
\newcommand\cA{\mathcal{A}}
\newcommand\cS{\mathcal{S}}
\newcommand\cM{\mathcal{M}}
\newcommand\cR{\mathcal{R}}
\newcommand\dps{\displaystyle}
\newcommand\eps{\varepsilon}
\newcommand{\id}{{\normalfont\hbox{1\kern-0.15em \vrule width .8pt depth-.5pt}}}
\let\oldmarginpar\marginpar
\renewcommand\marginpar[1]{\-\oldmarginpar[\raggedleft\footsize #1]%
{\raggedright\footsize #1}}
\begin{document}

\title{Embedded corrector problems for homogenization in linear elasticity} 
\author{Virginie Ehrlacher$^{1,3}$, Fr\'ed\'eric Legoll$^{2,3}$, Benjamin Stamm$^4$, Shuyang Xiang$^{1,4}$
\\
{\footnotesize $^1$ CERMICS, \'Ecole des Ponts, Marne-La-Vall\'ee, France}
\\
{\footnotesize $^2$ Navier, \'Ecole des Ponts, Univ Gustave Eiffel, CNRS, Marne-La-Vall\'ee, France}
\\
{\footnotesize $^3$ MATHERIALS project-team, Inria, Paris, France}
\\
{\footnotesize $^4$ MATHCCES, Department of Mathematics, RWTH Aachen University, Schinkelstrasse 2, D-52062 Aachen, Germany}
}

\maketitle

\begin{abstract}
%
In this article, we extend the study of embedded corrector problems, that we have previously introduced in the context of the homogenization of scalar diffusive equations, to the context of homogenized elastic properties of materials. This extension is not trivial and requires mathematical arguments specific to the elasticity case. Starting from a linear elasticity model with highly-oscillatory coefficients, we introduce several effective approximations of the homogenized tensor. These approximations are based on the solution to an embedded corrector problem, where a finite-size domain made of the linear elastic heterogeneous material is embedded in a linear elastic homogeneous infinite medium, the constant elasticity tensor of which has to be appropriately determined. The approximations we provide are proven to converge to the homogenized elasticity tensor when the size of the embedded domain tends to infinity. Some particular attention is devoted to the case of isotropic materials. 
\end{abstract}


\section{Introduction}

Let $d \in \N^\star$ and $D \subset \R^d$ be a bounded, regular domain. For a given function $f \in L^2(D)^d$, we consider the following highly-oscillatory linear elasticity problem: find $u^\eps \in H^1_0(D)^d$ such that
$$
- {\rm div} \big( \A^\eps e(u^\eps) \big) = f \qquad \mbox{in $D$}, 
$$
where the strain tensor $e(u_\eps)$ is 
$$
e(u^\eps) = \frac{1}{2} \Big( \nabla u^\eps + \big( \nabla u^\eps \big)^T \Big), 
$$
and where the elasticity tensor-valued field $\A^\eps = \left( \A^\eps_{ijkl} \right)_{1 \leq i,j,k,l \leq d} \in L^\infty(D,\R^{d \times d \times d \times d})$ varies at the small characteristic length-scale $\eps$ (the elasticity tensor $\A^\eps$ additionally satisfies standard symmetry and coercivity assumptions that are made precise in Section~\ref{sec:pre} below). 

It follows from classical results in homogenization theory (see e.g.~\cite{CD,JKO} and~\cite[Chapter~1]{Allaire_2002_shape_opt}) that, under appropriate uniform boundedness and ellipticity assumptions on the family $\left(\A^\eps\right)_{\eps>0}$, there exists a subsequence $\left(\A^{\eps'}\right)_{\eps'>0}$ which converges in the sense of homogenization to some field $\A^\star \in L^\infty(D,\R^{d \times d \times d \times d})$. More precisely, for any $f \in L^2(D)^d$, we have that $(u^{\eps'})_{\eps'>0}$ (respectively $\left( \A^{\eps'} e(u^{\eps'}) \right)_{\eps'>0}$) weakly converges in $H_0^1(D)^d$ (respectively in $L^2(D)^{d \times d}$) to $u^\star$ (respectively to $\A^\star e(u^\star)$), where $u^\star$ is the unique solution in $H^1_0(D)^d$ to
$$
- {\rm div} \big( \A^\star e(u^\star) \big) = f \qquad \mbox{in $D$},
$$
and where $\A^\star$ is the homogenized elasticity tensor field.

In the following, we consider the case where $\A^\eps = \A(\cdot/\eps)$ for a fixed tensor $\A$, and where the {\em whole} sequence $\left( \A^\eps \right)_{\eps>0}$ (and not only a subsequence) converges in the sense of homogenization to some {\em constant} homogenized tensor, that we denote $A^\star$ in the sequel. This setting of course includes the periodic setting (when $\A$ is $\Z^d$ periodic), the quasi-periodic setting, and the ergodic random stationary setting (in which the constant homogenized tensor $A^\star$ is deterministic), to name but a few. 

In general (except in the periodic setting), the computation of $A^\star$ is challenging. It usually requires the resolution of some auxiliary problems, called the corrector problems, which are defined on the {\em whole space} $\R^d$. Standard numerical methods to approximate $A^\star$ are often based on the introduction of a large but finite simulation domain (sometimes called the representative volume element) and require the resolution of an approximate corrector problem defined on this truncated domain (and complemented by appropriate, e.g. periodic, boundary conditions). The larger the domain is, the more accurate the obtained approximation of $A^\star$ is. However, for large representative volume elements, the numerical resolution of the truncated corrector problems generally yields discretized problems with a very large number of degrees of freedom, which lead to prohibitive computational costs. 

Motivated by our previous works~\cite{notre_cras,PART1,PART2} in the context of a simple, highly oscillatory diffusive equation (which can be thought of as modelling the thermal properties of heterogeneous materials), we consider in this article a new version of the corrector problem (which we call the embedded corrector problem) in the context of linear elasticity. This problem is defined over the whole space, where a representative volume element of the highly oscillatory material is embedded in an infinite homogeneous medium, the elasticity tensor of which has to be properly selected. We introduce four possible choices for the value of the exterior tensor, which provide, along with the embedded corrector problem, four consistent approximations of the homogenized tensor $A^\star$. More precisely, we prove here that the effective tensors computed with these four methods all converge to $A^\star$ as the size of the representative element goes to infinity. The motivation for considering such embedded problems relies in the fact that a very efficient numerical method can be proposed for their resolution in the practically relevant case of materials composed of spherical inclusions. In other words, despite the fact that the embedded problem is posed on the whole space, it can be efficiently solved using the specific geometrical structure of the heterogeneous material and the fact that, beyond the representative volume element, the material properties are homogeneous. We refer to~\cite{PART2} for more details on the numerical method for the case of scalar diffusive equations. 

The present article is thus an extension of~\cite{PART1} to the elasticity case (and we refer to~\cite{SX} for the companion, numerically-oriented contribution). We stress the fact that the extensions of several results of~\cite{PART1} are actually not straightforward and involve new strategies of proof. We highlight below the results which require a specific treatment in the elasticity case, compared to the thermal case.

\medskip

The article is organized as follows. In Section~\ref{sec:pre}, we recall some well-known theoretical results in homogenization theory and some classical numerical methods to approximate $A^\star$. The embedded corrector problem we consider in this article is introduced in Section~\ref{sec:problem} (see problem~\eqref{p-corrector}). The different effective approximations of the homogenized tensor which can be computed using this embedded corrector problem are presented in Section~\ref{sec:Alternative}, together with the proof of their consistency. Our first main result is therefore Proposition~\ref{prop:convergence}. Stronger results can be obtained in the case when the heterogeneous materials (modelled by $\A^\eps$) are isotropic. To prove these results, we need to introduce some theoretical elements on linear elasticity problems involving isotropic materials, such as the definition of vectorial spherical harmonics and the resolution of the Eshelby problem. These ingredients are introduced in Section~\ref{sec:iso} (see in particular Lemma~\ref{Elsheby}). The stronger results related to embedded corrector problems for the homogenization of isotropic materials are presented and proved in Section~\ref{sec:pro}. This yields our second main result, Proposition~\ref{self-consistent}, and also Proposition~\ref{prop:convergence_iso}. In Appendix~\ref{sec:appendix}, we collect some auxiliary results related to Korn's inequalities which are used throughout the article.


\section{Theoretical preliminaries} \label{sec:pre}

Let $d \in \N^\star$ be the ambient dimension. Throughout this article, we make use of the following notation: for any $\sigma = (\sigma_{ij})_{1 \leq i,j \leq d} \in \R^{d \times d}$, $\tau = (\tau_{ij})_{1 \leq i,j \leq d} \in \R^{d \times d}$ and $A = (A_{ijkl})_{1 \leq i,j,k,l \leq d} \in \R^{d \times d \times d \times d}$, we set
\begin{align*}
  \sigma \cdot \tau & := \sum_{1 \leq i,j \leq d} \sigma_{ij} \, \tau_{ij},
  \\
  |\sigma| & := \sqrt{\sigma \cdot \sigma},
  \\
  A\sigma & := \big( (A\sigma)_{ij} \big)_{1 \leq i,j \leq d} \quad \mbox{ where } \quad (A\sigma)_{ij} = \sum_{1 \leq k,l \leq d} A_{ijkl} \, \sigma_{kl} \quad \mbox{for all $1\leq i,j \leq d$.}
\end{align*}
Let $\mathcal{S}$ denote the set of real-valued symmetric matrices of size $d \times d$ and $\widetilde{\mathcal{M}}$ the set of fourth-order tensors $A = (A_{ijkl})_{1 \leq i,j,k,l \leq d} \in \R^{d \times d \times d \times d}$ satisfying the following symmetry relationships:
$$
\forall 1 \leq i,j,k,l \leq d, \qquad A_{ijkl} = A_{klij} = A_{jikl} = A_{ijlk}.
$$
The set $\widetilde{\mathcal{M}}$ can be easily seen as the set of symmetric endomorphisms of $\mathcal{S}$. 

\medskip

Let $0 < \alpha < \beta < +\infty$ and let $\mathcal{M} \subset \widetilde{\mathcal{M}}$ be the set of fourth-order tensors $A \in \widetilde{\mathcal{M}}$ satisfying the following ellipticity and boundedness assumptions:
\begin{equation}\label{eq:ellipticity}
\forall \sigma \in \mathcal{S}, \qquad \alpha \, |\sigma|^2 \leq \sigma \cdot A \sigma \leq \beta \, |\sigma|^2. 
\end{equation}
It is easy to see that any $A \in \mathcal{M}$ satisfies $| A_{ijkl} | \leq \beta$ for any $1 \leq i,j,k,l \leq d$.  

\medskip

We also introduce a canonical basis $(\sigma^{ij})_{1 \leq i \leq j \leq d}$ of $\mathcal{S}$ as follows: for all $1 \leq i \leq j \leq d$, we set $\sigma^{ij} = (\sigma^{ij}_{kl})_{1 \leq k,l \leq d}$ with
\begin{equation}\label{Sij}
\forall 1 \leq k,l \leq d, \qquad \sigma^{ij}_{kl} := \frac{1}{2} (\delta_{ik} \delta_{jl} + \delta_{il} \delta_{jk}), 
\end{equation}
where $\delta$ stands for the Kronecker symbol. For example, in the case when $d=2$, we have $\dps \sigma^{11} = \begin{bmatrix} 1 & 0 \\ 0 & 0 \end{bmatrix}$ and $\dps \sigma^{12} = \sigma^{21} = \begin{bmatrix} 0 & 1/2 \\ 1/2 & 0 \end{bmatrix}$.

Lastly, for any regular enough function $u = (u_i)_{1 \leq i \leq d}$ with values in $\R^d$, we denote by $e(u) = \left( e_{ij}(u) \right)_{1 \leq i,j \leq d}$ its associated strain tensor defined by
$$
\forall 1 \leq i,j \leq d, \qquad e_{ij}(u) := \frac{1}{2} \left( \partial_{x_j} u_i + \partial_{x_i} u_j \right). 
$$

\subsection{Classical homogenization results}

We start by recalling the definition of G-convergence (see the seminal work~\cite{Murat,cherkaev} and also~\cite[Definition~12.2]{JKO} and~\cite[Chapter~1]{Allaire_2002_shape_opt}):

\begin{definition}\label{Def-G}
Let $D \subset \R^d$  be a bounded smooth domain and $\left( \A^\eps \right)_{\eps>0} \subset L^\infty(D,{\cal M})$. The sequence $(\A^\eps)_{\eps>0}$ is said to G-converge to some $\A^\star \in L^\infty(D,{\cal M})$ if, for any $f\in H^{-1}(D)^d$, the solutions $u^\eps \in H_0^1(D)^d$ and $u^\star \in H_0^1(D)^d$ to the problems 
\begin{gather*}
- {\rm div} \big(\A^\eps e(u^\eps) \big) = f \qquad \text{in ${\mathcal{D}}'(D)^d$}, 
\\
- {\rm div} \big(\A^\star e(u^\star) \big) = f \qquad \text{in ${\mathcal{D}}'(D)^d$},
\end{gather*}
satisfy the following two properties:
\begin{gather*}
  u^\eps \xrightharpoonup[\eps \to 0]{} u^\star \quad \text{weakly in $H_0^1(D)^d$},
  \\
  \A^\eps e(u^\eps) \xrightharpoonup[\eps\to 0]{} \A^\star e(u^\star) \quad \text{weakly in $L^2(D,\mathcal{S})$}. 
\end{gather*}
The tensor $\A^\star$ is called the homogenized limit of $(\A^\eps)_{\eps>0}$. 
\end{definition}

\medskip

A particular motivation for our work is the computation of effective elastic properties in stochastic homogenization. Let us recall some well-known results in the ergodic stationary setting. We restrict our presentation here to the case of continuous stationarity for the sake of simplicity and refer to~\cite[Chapter~7]{JKO} for a more general presentation (another type of stationarity, namely discrete stationarity, is described in e.g.~\cite{singapour}). Let $(\Omega, \mathcal{F}, \mathbb{P})$ be a probability space and let us assume that the group $(\R^d, +)$ acts on $\Omega$. Let us denote its action by $(T_x)_{x\in \R^d}$ and assume that
\begin{itemize}
\item it preserves the measure $\mathbb{P}$, i.e. for any $F \in \mathcal{F}$ and any $x \in \R^d$, we have $\mathbb{P}(T_x^{-1} F) = \mathbb{P}(F)$; 
\item it is ergodic, i.e. for any $F \in \mathcal{F}$, if $T_x(F) = F$ for all $x \in \R^d$, then $\mathbb{P}(F)$ is equal to $1$ or $0$. 
\end{itemize}
Let us recall the definition of a stationary field.

\begin{definition}
\label{De:stationary}
A random field $a \in L^1_{\rm loc}\big(\R^d, L^1(\Omega)\big)$ is said to be stationary if
$$
\forall y \in \R^d, \quad a(x+y,\omega) = a(x,T_y\omega) \quad \mbox{for almost all $x\in \R^d$ and almost surely.}
$$
\end{definition}

We are now in position to state the following theorem (see~\cite[Theorem~12.4]{JKO}), which is a classical result in homogenization theory. 

\begin{theorem}
\label{homo-main}
Let $\A\in L^\infty\big(\R^d, L^1(\Omega,\mathcal{M})\big)$ be a tensor-valued stochastic field, stationary in the sense of Definition~\ref{De:stationary}. For all $\eps>0$, $x\in \R^d$ and $\omega \in \Omega$, set $\A^\eps(x,\omega) = \A(x/\eps,\omega)$. Then, the family $\left( \A^\eps(\cdot,\omega) \right)_{\eps>0}$ G-converges almost surely in the sense of Definition~\ref{Def-G} to a constant and deterministic tensor $A^\star \in \mathcal{M}$ in any bounded smooth domain $D\subset \R^d$. In addition, the tensor $A^\star$ is given by
\begin{equation}\label{eq:A*}
\forall \sigma \in \mathcal{S}, \qquad A^\star \sigma = \mathbb{E}\Big[\A(x,\cdot) \, \big(\sigma + e(w_\sigma(x,\cdot)) \big)\Big],
\end{equation}
where the above right-hand side actually does not depend on $x$ (by stationarity) and where $w_\sigma : \R^d \times \Omega \to \R^d$ is the solution (unique up to an additive constant vector) to the problem
\begin{equation}\label{eq:corrector}
\begin{cases}
- {\rm div} \Big[ \A(\cdot,\omega) \big( \sigma + e(w_\sigma(\cdot,\omega)) \big) \Big] = 0 \qquad \text{a.s. in $\R^d$}, \\
\text {$e(w_\sigma)$ is stationary in the sense of Definition~\ref{De:stationary}}, \\
\mathbb{E}\big[ e(w_\sigma(x,\cdot)) \big] = 0, 
\end{cases}
\end{equation}
where again, by stationarity, the left-hand side of the last line of~\eqref{eq:corrector} actually does not depend on $x$. 
\end{theorem}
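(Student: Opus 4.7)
The natural approach is the classical Kozlov--Papanicolaou--Varadhan scheme, adapted to the elasticity setting: (i) construct $w_\sigma$ via a variational problem on a Hilbert space of stationary, mean-zero symmetric gradients; (ii) use Birkhoff's ergodic theorem to exhibit the rescaled correctors $\eps w_\sigma(\cdot/\eps,\omega)$ as admissible oscillating test functions; (iii) apply Tartar's energy / div--curl argument to identify the weak limits of $u^\eps$ and $\A^\eps e(u^\eps)$.

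\textbf{Construction of the corrector.} Let $\mathcal{H}$ denote the closure in $L^2(\Omega,\cS)$ of the set of random symmetric gradients $e(\varphi)(0,\cdot)$, where $\varphi:\R^d\times\Omega\to\R^d$ is smooth with stationary symmetric gradient of mean zero. Using the ergodic theorem together with a Korn-type estimate in the probability space (as provided by Appendix~\ref{sec:appendix}), every $E\in\mathcal{H}$ can be realized almost surely as $e(w)(x,\omega)$ for a displacement $w$ unique up to an additive rigid body motion. For fixed $\sigma\in\cS$, the bilinear form $a(E,F):=\mathbb{E}[\A(0,\cdot)E\cdot F]$ on $\mathcal{H}$ is coercive, thanks to the pointwise bound $\alpha|E|^2\leq E\cdot\A E$ from~\eqref{eq:ellipticity}, and continuous by the upper bound in the same inequality. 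Lax--Milgram yields a unique $E_\sigma\in\mathcal{H}$ with $a(E_\sigma,F)=-\mathbb{E}[\A(0,\cdot)\sigma\cdot F]$ for all $F\in\mathcal{H}$; the corresponding displacement $w_\sigma$ solves~\eqref{eq:corrector}, and~\eqref{eq:A*} defines $A^\star$. Membership in $\cM$ follows by testing the variational formulation with $F=E_\tau$: this gives $\tau\cdot A^\star\sigma=\mathbb{E}[\A(\sigma+E_\sigma)\cdot(\tau+E_\tau)]$, which is manifestly symmetric in $(\sigma,\tau)$ and yields $\alpha|\sigma|^2\leq\sigma\cdot A^\star\sigma\leq\beta|\sigma|^2$ upon choosing $\tau=\sigma$.

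\textbf{Passage to the limit.} For $f\in H^{-1}(D)^d$, ellipticity and Korn's inequality on $H_0^1(D)^d$ yield a uniform $H_0^1(D)^d$-bound on $u^\eps$, so along a subsequence $u^\eps\rightharpoonup u^\star$ in $H_0^1(D)^d$ and $\A^\eps e(u^\eps)\rightharpoonup\xi$ in $L^2(D,\cS)$. For each $\sigma\in\cS$, set $v_\sigma^\eps(x,\omega):=\sigma x+\eps w_\sigma(x/\eps,\omega)$, whose symmetric gradient equals $\sigma+e(w_\sigma)(x/\eps,\omega)$. By Birkhoff's ergodic theorem, almost surely,
$$
e(v_\sigma^\eps)\xrightharpoonup[\eps\to 0]{}\sigma \quad\text{and}\quad \A^\eps e(v_\sigma^\eps)\xrightharpoonup[\eps\to 0]{}A^\star\sigma \quad\text{weakly in }L^2(D,\cS),
$$
while $v_\sigma^\eps\to\sigma x$ strongly in $L^2(D)^d$. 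Testing the equation for $u^\eps$ against $\varphi\, v_\sigma^\eps$ (with $\varphi\in C_c^\infty(D)$) and the equation $-{\rm div}[\A^\eps e(v_\sigma^\eps)]=0$ against $\varphi\, u^\eps$, then subtracting and passing to the limit via a compensated compactness / div--curl argument, yields $\int_D\varphi\,\xi\cdot\sigma\,dx=\int_D\varphi\,A^\star e(u^\star)\cdot\sigma\,dx$ for every $\sigma\in\cS$ and $\varphi\in C_c^\infty(D)$. This identifies $\xi=A^\star e(u^\star)$; since $A^\star$ is deterministic and the limit is thus unique, the whole family (and not merely a subsequence) G-converges to $A^\star$.

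\textbf{Main obstacle.} The delicate point specific to the elasticity case is Step~1: stationary mean-zero symmetric gradients do not, a priori, arise from stationary displacements, and there is no scalar potential available as in the diffusion case. Reconstructing $w_\sigma$ from $E_\sigma$ with controlled sublinear growth in $x$, and ensuring that $\mathcal{H}$ is the correct closed subspace of $L^2(\Omega,\cS)$ on which to apply Lax--Milgram, rests on Korn-type inequalities in the probabilistic setting---precisely the material compiled in Appendix~\ref{sec:appendix}. This is the ingredient without immediate analogue in the scalar diffusion treatment of~\cite{PART1}, and it is what makes the elasticity extension non-trivial.
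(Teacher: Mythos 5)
The paper does not prove this theorem: it is quoted verbatim as a classical result with a pointer to~\cite[Theorem~12.4]{JKO}, so there is no in-paper proof to compare against. Your sketch is precisely the standard Kozlov--Papanicolaou--Varadhan construction combined with Tartar's oscillating-test-function/energy method, i.e.\ essentially the argument of the cited reference, and its three steps (Lax--Milgram on the space of stationary mean-zero symmetric potential fields, Birkhoff to produce the oscillating test functions $\sigma x+\eps w_\sigma(\cdot/\eps)$, div--curl identification of the flux limit) are the right ones; the symmetry and the bounds $\alpha|\sigma|^2\le\sigma\cdot A^\star\sigma\le\beta|\sigma|^2$ follow as you indicate from the variational characterization.

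One correction on attribution: your ``Main obstacle'' paragraph claims that the probabilistic Korn-type inequalities needed to reconstruct a sublinear displacement $w_\sigma$ from the stationary field $E_\sigma$ are ``precisely the material compiled in Appendix~\ref{sec:appendix}.'' They are not. That appendix contains only deterministic statements on bounded domains of $\R^d$ (the characterization of rigid displacements, the bound $\|e(u)\|\le\|\nabla u\|$, and Korn's inequality in $H_0^1$), which the paper uses for the \emph{embedded} corrector problem of Section~\ref{sec:problem}, not for the stochastic corrector~\eqref{eq:corrector}. The probabilistic counterpart you need --- that the closure $\mathcal{H}$ of stationary symmetric gradients is well defined and that each element of $\mathcal{H}$ lifts to a displacement with sublinear growth --- has to be taken from~\cite[Chapters~7 and~12]{JKO} (or proved via a Fourier/Weyl-decomposition argument on $\Omega$); it is not supplied anywhere in this article. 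With that reference substituted, your outline is a correct account of the classical proof.
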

We note that, starting in~\eqref{eq:A*}--\eqref{eq:corrector} and throughout this article, $\sigma$ denotes a constant symmetric matrix which is not to be confused with a stress tensor. 

\medskip

We end this section by another classical result of homogenization (see~\cite[Section~12.2]{JKO}) which will be useful in the sequel, and which considers a general family of G-convergent tensors.

\begin{theorem}
\label{jikov}
Let $D\subset \R^d$ and assume that $(\A^\eps)_{\eps>0} \subset L^\infty(D,\mathcal{M})$ is a family of tensor-valued fields which G-converges to $\A^\star \in L^\infty(D,\mathcal{M})$ in the domain $D$. Let $D_1\subset D$ be an arbitrary subdomain of $D$. For all $\sigma \in \mathcal{S}$ and $\eps>0$, let $w_\sigma^\eps\in H^1(D_1)^d$ be a solution to 
$$
-{\rm div} \big(\A^\eps (\sigma+e(w_\sigma^\eps)) \big) = 0 \quad \mbox{in $\mathcal{D}'(D_1)^d$}.
$$
If there exists $w_\sigma^\star\in H^1(D_1)^d$ such that $\dps w_\sigma^\eps \mathop{\rightharpoonup}_{\eps \to 0} w_\sigma^\star$ weakly in $H^1(D_1)^d$, then 
$$
-{\rm div} \big(\A^\star (\sigma+e(w_\sigma^\star)) \big) = 0 \quad \mbox{in $\mathcal{D}'(D_1)^d$}
$$
and 
$$
\A^\eps (\sigma+e(w_\sigma^\eps)) \mathop{\rightharpoonup}_{\eps\to 0} \A^\star (\sigma+e(w_\sigma^\star)) \qquad \text{weakly in $L^2(D_1)^{d\times d}$}. 
$$
\end{theorem}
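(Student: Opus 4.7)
The plan is to use Tartar's method of oscillating test functions. By the assumed weak convergence $w_\sigma^\eps \rightharpoonup w_\sigma^\star$ in $H^1(D_1)^d$, the strain $e(w_\sigma^\eps)$ is bounded in $L^2(D_1,\mathcal{S})$; combined with the $L^\infty$ bound $\|\A^\eps\|_{L^\infty} \leq \beta$ coming from~\eqref{eq:ellipticity}, this makes $\xi^\eps := \A^\eps(\sigma + e(w_\sigma^\eps))$ bounded in $L^2(D_1,\mathcal{S})$. Up to a subsequence, $\xi^\eps \rightharpoonup \xi^\star$ weakly in $L^2(D_1,\mathcal{S})$, and passing to the limit in $-{\rm div}\, \xi^\eps = 0$ in $\mathcal{D}'(D_1)^d$ immediately yields $-{\rm div}\, \xi^\star = 0$ in $\mathcal{D}'(D_1)^d$. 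The substantive task is then to identify $\xi^\star = \A^\star(\sigma + e(w_\sigma^\star))$.

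For this identification, I would test against dual corrector-type functions. Fix a smooth subdomain $D_2 \Subset D_1$ and a constant $\tau \in \mathcal{S}$, and let $v_\tau^\eps \in H^1(D_2)^d$ be the unique solution to $-{\rm div}(\A^\eps e(v_\tau^\eps)) = 0$ in $D_2$ with Dirichlet data $v_\tau^\eps(x) = \tau x$ on $\partial D_2$. Since G-convergence is a local property (which follows from Definition~\ref{Def-G} by a cut-off argument), we have $v_\tau^\eps \rightharpoonup v_\tau^\star$ weakly in $H^1(D_2)^d$ and $\A^\eps e(v_\tau^\eps) \rightharpoonup \A^\star e(v_\tau^\star)$ weakly in $L^2(D_2, \mathcal{S})$, with $v_\tau^\star$ solving the homogenized Dirichlet problem. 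Exploiting the full symmetry $A_{ijkl} = A_{klij}$ of $\A^\eps$, one has the pointwise identity $\xi^\eps \cdot e(v_\tau^\eps) = (\sigma + e(w_\sigma^\eps)) \cdot \A^\eps e(v_\tau^\eps)$. Since the rows of $\xi^\eps$ and of $\A^\eps e(v_\tau^\eps)$ are divergence-free by construction, while the scalar components of $v_\tau^\eps$ and $w_\sigma^\eps$ produce curl-free gradients, the div-curl lemma applied componentwise passes both sides to the limit in $\mathcal{D}'(D_2)$. Using the symmetry of $\A^\star$, this yields
$$
\big(\xi^\star - \A^\star(\sigma + e(w_\sigma^\star))\big) \cdot e(v_\tau^\star) = 0 \quad \mbox{in } \mathcal{D}'(D_2).
$$

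The main obstacle is to upgrade this family of distributional identities, indexed by $\tau \in \mathcal{S}$ and $D_2 \Subset D_1$, into the pointwise equality $\xi^\star = \A^\star(\sigma + e(w_\sigma^\star))$ a.e.\ in $D_1$. I would localize at a Lebesgue point $x_0 \in D_1$ of $\A^\star$: taking $D_2 = B(x_0, r)$ and rescaling via $y \mapsto v_\tau^\star(x_0 + r y)/r$, the rescaled problem on $B(0,1)$ passes, as $r \to 0$, to a constant-coefficient Dirichlet problem with tensor $\A^\star(x_0)$ and boundary data $y \mapsto \tau y$, whose unique solution has constant strain $\tau$. Letting $\tau$ range over the canonical basis $(\sigma^{ij})_{1 \leq i \leq j \leq d}$ of $\mathcal{S}$ introduced in~\eqref{Sij} then produces the desired pointwise identity. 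Uniqueness of $\xi^\star$ finally promotes weak convergence of the entire sequence $\xi^\eps$ (not merely of a subsequence), and substituting the identification into the divergence-free property established earlier gives $-{\rm div}(\A^\star(\sigma + e(w_\sigma^\star))) = 0$ in $\mathcal{D}'(D_1)^d$.
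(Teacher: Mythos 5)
The paper does not prove Theorem~\ref{jikov}: it is quoted as a classical result with a reference to \cite[Section~12.2]{JKO}, so there is no internal proof to compare against. Your proposal is the standard Tartar oscillating-test-function/div--curl argument, which is indeed the right strategy, and most of it is sound: the boundedness and weak compactness of $\xi^\eps=\A^\eps(\sigma+e(w^\eps_\sigma))$, the passage to the limit in $-{\rm div}\,\xi^\eps=0$, the use of the symmetry $A_{ijkl}=A_{klij}$ to write $\xi^\eps\cdot e(v^\eps_\tau)=(\sigma+e(w^\eps_\sigma))\cdot\A^\eps e(v^\eps_\tau)$, and the componentwise div--curl lemma (each row of $\xi^\eps$ and of $\A^\eps e(v^\eps_\tau)$ is divergence-free, while $\nabla(v^\eps_\tau)_i$ and $\nabla(w^\eps_\sigma)_i$ are gradients) are all correct.

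The genuine gap is the sentence ``Since G-convergence is a local property (which follows from Definition~\ref{Def-G} by a cut-off argument), we have $v^\eps_\tau\rightharpoonup v^\star_\tau$ \dots and $\A^\eps e(v^\eps_\tau)\rightharpoonup \A^\star e(v^\star_\tau)$.'' Definition~\ref{Def-G} only concerns problems posed on the \emph{whole} domain $D$, in $H^1_0(D)^d$, with a right-hand side $f$ that is \emph{fixed independently of $\eps$}. Your auxiliary problem on $D_2$ with boundary data $\tau x$ rewrites as $z^\eps\in H^1_0(D_2)^d$ with right-hand side ${\rm div}(\A^\eps\tau)$, which is posed on a subdomain and has an $\eps$-dependent source, so it is not covered by the definition; and the flux-convergence part of what you invoke is precisely an instance of the theorem you are trying to prove (locality of G-convergence), so the argument as written is circular. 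The standard repair is to build the oscillating test functions on $D$ itself: pick a cut-off $\chi\in C^\infty_0(D)$ with $\chi\equiv 1$ on a neighbourhood of $\overline{D_1}$, set $P_\tau(x)=\chi(x)\,\tau x\in H^1_0(D)^d$ and $f_\tau:=-{\rm div}(\A^\star e(P_\tau))\in H^{-1}(D)^d$, which is fixed. Definition~\ref{Def-G} applied to $f_\tau$ gives $u^\eps_\tau\rightharpoonup P_\tau$ in $H^1_0(D)^d$ and $\A^\eps e(u^\eps_\tau)\rightharpoonup \A^\star e(P_\tau)=\A^\star\tau$ on $D_1$, with ${\rm div}(\A^\eps e(u^\eps_\tau))=-f_\tau$ fixed, hence compact in $H^{-1}$. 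Running your div--curl computation against $u^\eps_\tau$ instead of $v^\eps_\tau$ yields directly $\bigl(\xi^\star-\A^\star(\sigma+e(w^\star_\sigma))\bigr)\cdot\tau=0$ a.e.\ in $D_1$ for every $\tau\in\mathcal S$, and since both tensors are $\mathcal S$-valued this gives the identification without any blow-up at Lebesgue points. This also removes your final rescaling step, which as written would itself need justification (convergence of the rescaled variable-coefficient homogenized problems to the constant-coefficient one at a mere Lebesgue point of $\A^\star$ is delicate).
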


\subsection{Numerical homogenization with periodic boundary conditions}

An important question of practical interest in the context of homogenization is to compute accurate approximations of the homogenized tensor $A^\star$. In the stochastic case, $A^\star$ is defined by~\eqref{eq:A*} in terms of the auxiliary functions $w_\sigma$ solution to the corrector problem~\eqref{eq:corrector}. The fact that this problem is posed over the whole space $\R^d$ is the key bottleneck in this context. Thus, in practice, one has to consider approximate problems, often defined on some truncated bounded domains with appropriate boundary conditions and compute approximations of $A^\star$ using the solution to these truncated corrector problems. 

\medskip

A classical strategy to approximate $A^\star$ is to consider truncated problems complemented with periodic boundary conditions. More precisely, for any $R>0$ and any $\sigma \in \mathcal{S}$, let  
$w^{R,{\rm per}}_\sigma(\cdot,\omega) \in H^1_{\rm per}\big((-R,R)^d\big)^d$ be the unique solution with vanishing mean of the following variational formulation:
$$
\forall v \in H^1_{\rm per}\big((-R,R)^d\big)^d, \quad \int_{(-R,R)^d} e(v) \cdot \A(\cdot,\omega) \left(\sigma + e \left(w_\sigma^{R,{\rm per}}(\cdot,\omega)\right) \right) = 0.
$$
The function $w_\sigma^{R,{\rm per}}(\cdot,\omega)$ in particular satisfies the following equation:
\begin{equation}\label{corrector-N}
- {\rm div} \Big(\A(\cdot,\omega) \left(\sigma + e \left(w_\sigma^{R,{\rm per}}(\cdot,\omega) \right) \right) \Big) = 0 \quad \text{a.s. in $(-R,R)^d$}.
\end{equation}
We next define the fourth-order tensor $A^{R,{\rm per}}(\omega) \in \cM$ by
\begin{equation} \label{eq:orthodon}
\forall \sigma \in \cS, \quad A^{R,{\rm per}}(\omega) \sigma = \frac{1}{(2R)^d} \int_{(-R,R)^d} \A(\cdot,\omega) \left(\sigma + e\left(w_\sigma^{R,{\rm per}}(\cdot,\omega) \right)\right).
\end{equation}
The following result is a direct extension of~\cite{Bourgeat}.

\begin{proposition}\label{prop:Bourgeat}
Assume that $\A\in L^\infty\big(\R^d, L^1(\Omega,\mathcal{M})\big)$ is a stationary tensor-valued stochastic field. Then the sequence $\left(A^{R,{\rm per}}(\omega)\right)_{R>0}$ defined by~\eqref{eq:orthodon} converges almost surely to $A^\star$ as $R$ goes to infinity.
\end{proposition}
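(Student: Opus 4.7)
The plan is to reduce, via an $\eps = 1/R$ rescaling, to a setting on a fixed cube to which Theorems~\ref{homo-main} and~\ref{jikov} apply directly. Fix $\omega$ in the full-measure set on which Theorem~\ref{homo-main} holds, so that the family $\A^\eps(\cdot,\omega) := \A(\cdot/\eps,\omega)$ G-converges to the deterministic constant tensor $A^\star$ on any bounded smooth domain. With $Q := (-1,1)^d$ and
$$\widehat w^\eps_\sigma(x,\omega) := \eps \, w^{R,{\rm per}}_\sigma(x/\eps,\omega), \qquad x \in Q,$$
a direct change of variables shows that $\widehat w^\eps_\sigma(\cdot,\omega) \in H^1_{\rm per}(Q)^d$ has vanishing mean and satisfies
$$- {\rm div} \big( \A^\eps(\cdot,\omega) \big( \sigma + e(\widehat w^\eps_\sigma(\cdot,\omega)) \big) \big) = 0 \quad \mbox{in } \mathcal{D}'(Q)^d,$$
together with the representation
$$A^{R,{\rm per}}(\omega) \, \sigma = \frac{1}{|Q|} \int_Q \A^\eps(\cdot,\omega) \big( \sigma + e(\widehat w^\eps_\sigma(\cdot,\omega)) \big).$$

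The second step is a uniform-in-$\eps$ bound on $\widehat w^\eps_\sigma$ in $H^1(Q)^d$. Testing the variational formulation with the admissible periodic test function $\widehat w^\eps_\sigma$ itself yields
$$\int_Q e(\widehat w^\eps_\sigma) \cdot \A^\eps e(\widehat w^\eps_\sigma) = - \int_Q e(\widehat w^\eps_\sigma) \cdot \A^\eps \sigma,$$
from which assumption~\eqref{eq:ellipticity} combined with Cauchy--Schwarz produces $\|e(\widehat w^\eps_\sigma)\|_{L^2(Q)} \leq C |\sigma|$ with $C$ independent of $\eps$. Since $\widehat w^\eps_\sigma$ is periodic with zero mean, a Korn-type inequality on the periodic cell (collected in Appendix~\ref{sec:appendix}) upgrades this to $\|\widehat w^\eps_\sigma\|_{H^1(Q)} \leq C |\sigma|$, uniformly in $\eps$.

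By weak compactness, up to a subsequence, $\widehat w^\eps_\sigma \rightharpoonup \widehat w^\star_\sigma$ weakly in $H^1(Q)^d$, with $\widehat w^\star_\sigma \in H^1_{\rm per}(Q)^d$. Invoking Theorem~\ref{jikov} with $D$ any bounded smooth domain containing $\overline{Q}$ (on which Theorem~\ref{homo-main} provides the G-convergence of $\A^\eps(\cdot,\omega)$ to $A^\star$) and $D_1 = Q$ then yields
$$\A^\eps(\cdot,\omega) \big( \sigma + e(\widehat w^\eps_\sigma) \big) \mathop{\rightharpoonup}_{\eps\to 0} A^\star \big( \sigma + e(\widehat w^\star_\sigma) \big) \quad \mbox{weakly in } L^2(Q)^{d\times d}.$$
Integrating this weak convergence over $Q$ and using that $A^\star$ is a constant tensor reduces the computation of the limit to checking $\int_Q e(\widehat w^\star_\sigma) = 0$, which holds because $\widehat w^\star_\sigma$ is periodic (componentwise, $\int_Q \partial_j u_i = 0$ for any $Q$-periodic $u_i$). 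Hence $A^{R,{\rm per}}(\omega)\sigma \to A^\star \sigma$ along the subsequence; since the limit does not depend on the choice of subsequence, the whole family converges. Letting $\sigma$ range over a finite basis of $\cS$ and intersecting the corresponding exceptional null sets yields the claimed almost sure convergence $A^{R,{\rm per}}(\omega) \to A^\star$ in $\cM$.

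I expect the heart of the matter to be the uniform $H^1$ bound: controlling the full gradient of $\widehat w^\eps_\sigma$ by its symmetrized gradient with an $\eps$-independent constant requires a Korn inequality for zero-mean periodic vector fields on $Q$, which is the elasticity-specific ingredient absent from the scalar analogue treated in~\cite{PART1} and the reason Appendix~\ref{sec:appendix} collects such inequalities. Once this estimate is in hand, the rest is a standard G-convergence plus periodicity computation closely mirroring~\cite{Bourgeat}.
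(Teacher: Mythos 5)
Your argument is correct, and it is worth noting that the paper itself gives no proof of Proposition~\ref{prop:Bourgeat} --- it simply defers to~\cite{Bourgeat} --- so your rescaling-plus-G-convergence argument supplies exactly the standard reasoning being invoked: the change of variables $\eps=1/R$, the uniform energy bound, weak compactness, Theorem~\ref{jikov} applied on the rescaled cell, and the observation that $\int_Q e(\widehat w^\star_\sigma)=0$ by periodicity, followed by the usual subsequence-uniqueness argument. The only inaccuracy is your attribution of the periodic Korn inequality to Appendix~\ref{sec:appendix}: the appendix only records the $H^1_0$ version (Lemma~\ref{Korn2}) and the trivial bound of Lemma~\ref{simple}, not a Korn inequality on $H^1_{\rm per}(Q)^d$ with vanishing mean. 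That inequality is nonetheless true with the same constant $\sqrt{2}$ --- for $Q$-periodic $u$ one has the integration-by-parts identity $2\|e(u)\|^2_{L^2(Q)}=\|\nabla u\|^2_{L^2(Q)}+\|{\rm div}\,u\|^2_{L^2(Q)}$ with no boundary terms, whence $\|\nabla u\|_{L^2(Q)}\leq\sqrt{2}\,\|e(u)\|_{L^2(Q)}$, and the Poincar\'e--Wirtinger inequality then controls the $L^2$ norm of the zero-mean field --- so this is a misplaced citation rather than a gap, but you should state the one-line justification rather than point to the appendix.
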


Problem~\eqref{corrector-N} is usually approximated with standard finite element methods, which leads to discretized problems with a very large number of degrees of freedom since $R$ is large. Their resolution is thus computationally demanding. The purpose of this work is to introduce alternative approximations of the homogenized tensor, based on the use of a new approximate corrector problem, called hereafter the embedded problem and which is presented in details in the next section. As pointed out above, the motivation for considering these embedded problems is the following: in the special case of heterogeneous materials composed of isotropic spherical inclusions in an isotropic matrix, the proposed embedded corrector problem can be efficiently solved for large representative volume elements by a numerical method very close in spirit to the one proposed in~\cite{PART2} in the diffusion case. The numerical method for the elastic case is presented in full details in~\cite{SX}. 


\section{Embedded corrector problem}\label{sec:problem}

\subsection{Presentation of the embedded corrector problem}

We now present the alternative corrector problem we consider in this article, and on the basis of which we are going to provide new approximations of the homogenized tensor $A^\star$. This problem is similar to the one introduced in the case of scalar diffusion equations in~\cite{PART1,notre_cras}.

\medskip

For all $R>0$, let $B_R$ be the open ball of $\R^d$ centered at the origin and of radius $R$. Let also denote by $B = B_1$ the unit open ball and $\mathbb{S}$ the unit sphere. Let $n$ be the outward pointing normal vector on $\mathbb{S}$.

\medskip

For any $A \in \cM$ and $\A \in L^\infty(B,\cM)$, we define
\begin{equation}\label{bb-A}
\cA^{\A,A}(x) := 
\begin{cases}
\A(x) & \mbox{if $x \in B$}, \\
A & \mbox{if $x\in \R^d \setminus B$}. 
\end{cases}
\end{equation}
For any $\sigma \in \mathcal{S}$, consider the following problem, which we call hereafter an {\em embedded corrector problem}: find $w_\sigma^{\A,A} \in V_0$ solution to 
\begin{equation}\label{p-corrector}
- {\rm div} \left[ \mathcal{A}^{\A,A} \left(\sigma + e\left(w_\sigma^{\A,A}\right) \right) \right] = 0 \quad \text{in $\mathcal{D}'(\R^d)$},
\end{equation}
where the functional space $V_0$ is defined by
$$
V_0 = \left\{ v \in L_{\rm loc}^2(\R^d)^d, \quad \nabla v \in L^2(\R^d)^{d \times d}, \quad \int_B v = 0 \right\}.
$$
It can be easily seen that the space $V_0$ is a Hilbert space when endowed with the inner product $\langle \cdot,\cdot \rangle_{V_0}$ defined by
$$
\forall u,v\in V_0, \quad \langle u,v\rangle_{V_0} = \int_{\R^d} \nabla u \cdot \nabla v.  
$$
The variational form of problem~\eqref{p-corrector} reads as follows: find $w_\sigma^{\A,A} \in V_0$ such that
\begin{equation}\label{variation_1}
\forall v \in V_0, \quad \int_B e(v) \cdot \A \left(\sigma + e\left(w_\sigma^{\A,A}\right) \right) + \int_{\R^d \setminus B} e(v) \cdot A e\left(w_\sigma^{\A,A}\right) - \int_{\mathbb{S}} v \cdot ((A \sigma) \, n) = 0. 
\end{equation}
Equation~\eqref{variation_1} can be equivalently rewritten as
$$
\forall v\in V_0, \quad a^{\A,A}\left(w_\sigma^{\A,A},v\right) = b_\sigma^{\A,A}(v),
$$
where, for all $v,w\in V_0$, 
\begin{equation}\label{eq:linearforms}
a^{\A,A}(w,v) := \int_B e(v) \cdot \A e(w) + \int_{\R^d \setminus B} e(v) \cdot A e(w) \quad \mbox{and} \quad b_\sigma^{\A,A}(v) := - \int_B e(v) \cdot \A \sigma + \int_{\mathbb{S}} v \cdot ((A \sigma) \, n).
\end{equation}
In the sequel, for all $v\in V_0$, we denote by
\begin{align}\label{eq:E}
  \mathcal{E}_\sigma^{\A,A}(v)
  &=
  \frac{1}{|B|} \left( \int_B (\sigma + e(v)) \cdot \A (\sigma + e(v)) + \int_{\R^d\setminus B} e(v) \cdot A e(v) - 2 \int_{\mathbb{S}} v \cdot ((A \sigma) \, n) \right)
  \\
  \nonumber
  &=
  \frac{1}{|B|} \left( a^{\A,A}(v,v) - 2 b_\sigma^{\A,A}(v) + \int_B \sigma\cdot \A \sigma \right).
\end{align}
We then prove that there exists a unique solution to~\eqref{variation_1} (and equivalently to~\eqref{p-corrector}) in $V_0$. More precisely, we have the following propositions:

\begin{proposition}\label{prop:existence}
The bilinear form $a^{\A,A} : V_0 \times V_0\to \R$ is symmetric, continuous and coercive. The linear form $b_\sigma^{\A,A} : V_0 \to \R$ is continuous. There hence exists a unique solution $w_\sigma^{\A,A}$ in $V_0$ to~\eqref{variation_1}. Equivalently, $w_\sigma^{\A,A}$ is the unique solution to the minimization problem
\begin{equation}\label{u_chi}
  w_\sigma^{\A,A} = \mathop{\mbox{\rm argmin}}_{v\in V_0} \; \mathcal{E}_\sigma^{\A,A}(v).
\end{equation}
\end{proposition}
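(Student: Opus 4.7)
The plan is to apply the Lax--Milgram theorem on the Hilbert space $(V_0,\langle\cdot,\cdot\rangle_{V_0})$, whose squared norm is $\|v\|_{V_0}^2 = \int_{\R^d} |\nabla v|^2$, and then to deduce the characterization~\eqref{u_chi} from the symmetry of $a^{\A,A}$ and the identity
$$
\mathcal{E}_\sigma^{\A,A}(v) = \frac{1}{|B|}\left( a^{\A,A}(v,v) - 2 b_\sigma^{\A,A}(v) + \int_B \sigma\cdot \A \sigma \right)
$$
already recorded in~\eqref{eq:E}. The symmetry of $a^{\A,A}$ is immediate from the major symmetry $A_{ijkl}=A_{klij}$ built into $\widetilde{\cM}$. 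Continuity of $a^{\A,A}$ follows from the uniform bound $|\A_{ijkl}|, |A_{ijkl}|\leq \beta$ (valid because $\A(x), A\in\cM$), Cauchy--Schwarz and the pointwise inequality $|e(v)|\leq|\nabla v|$, which together give $|a^{\A,A}(w,v)| \leq C\beta\,\|\nabla w\|_{L^2(\R^d)}\,\|\nabla v\|_{L^2(\R^d)}$.

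The coercivity of $a^{\A,A}$ is where the real work is. The ellipticity assumption~\eqref{eq:ellipticity}, applied pointwise in $B$ and in $\R^d\setminus B$, immediately yields
$$
a^{\A,A}(v,v) \geq \alpha \int_{\R^d} |e(v)|^2 \qquad \text{for all } v\in V_0.
$$
What remains is a whole-space Korn-type inequality of the form $\int_{\R^d} |\nabla v|^2 \leq C \int_{\R^d} |e(v)|^2$ for every $v\in V_0$. This is the main obstacle: one cannot appeal directly to Korn's second inequality on a bounded domain, and on an unbounded domain the usual rigid-motion obstructions have to be revisited. The constraint $\nabla v\in L^2(\R^d)^{d\times d}$ however already rules out nonzero infinitesimal rotations (whose gradient would be a nonzero constant antisymmetric matrix, hence not in $L^2$), while the condition $\int_B v = 0$ pins the additive constant. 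A Fourier-side argument on smooth compactly supported functions (where $|\widehat{\nabla v}|^2$ and $|\widehat{e(v)}|^2$ are pointwise comparable up to an absolute constant) followed by density then delivers the required inequality; this is exactly the sort of Korn inequality collected in Appendix~\ref{sec:appendix}, which I would cite.

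For the continuity of $b_\sigma^{\A,A}$, the volume term is bounded by $\beta|\sigma|\,|B|^{1/2}\|e(v)\|_{L^2(B)} \leq C\|v\|_{V_0}$. The surface term is handled by the continuous trace $H^1(B)\to L^2(\mathbb{S})$ combined with the Poincar\'e inequality on $B$ (available because $\int_B v = 0$ for $v\in V_0$), which gives $\|v\|_{L^2(\mathbb{S})}\leq C\|v\|_{H^1(B)}\leq C\|\nabla v\|_{L^2(B)} \leq C\|v\|_{V_0}$, so that $\left|\int_{\mathbb{S}} v\cdot((A\sigma)n)\right| \leq \beta|\sigma|\,|\mathbb{S}|^{1/2}\|v\|_{L^2(\mathbb{S})}\leq C\|v\|_{V_0}$. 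Once these four properties are established, Lax--Milgram yields the unique $w_\sigma^{\A,A}\in V_0$ solving~\eqref{variation_1}. For the equivalence with~\eqref{u_chi}, the expression above for $\mathcal{E}_\sigma^{\A,A}$ shows that this functional is strictly convex and coercive on $V_0$ (the quadratic part being $a^{\A,A}/|B|$ and the linear part being continuous), hence admits a unique minimizer characterized by the first-order optimality condition $a^{\A,A}(w_\sigma^{\A,A},v) = b_\sigma^{\A,A}(v)$ for every $v\in V_0$, which is precisely~\eqref{variation_1}.
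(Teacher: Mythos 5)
Your proof has the same skeleton as the paper's: symmetry and continuity of $a^{\A,A}$, continuity of $b_\sigma^{\A,A}$ via the trace operator and a Poincar\'e--Wirtinger inequality on $B$, coercivity via ellipticity plus a whole-space Korn inequality on $V_0$, then Lax--Milgram and the equivalence with~\eqref{u_chi} by symmetry. The one place where you diverge is the key ingredient, the inequality $\|\nabla v\|_{L^2(\R^d)}\leq C\|e(v)\|_{L^2(\R^d)}$ on $V_0$. The paper devotes Lemma~\ref{Korn4} to this (it is \emph{not} in Appendix~\ref{sec:appendix}, which only records Korn's inequality in $H^1_0$ of a bounded domain and the rigid-displacement lemma; citing the appendix for the $V_0$ version would be a dangling reference). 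The paper's route is to solve $-\mathrm{div}\,e(\phi_k)=-\mathrm{div}\,e(u)$ in $H^1_0(B_k)$, use Korn in $H^1_0$ to get uniform bounds, extract a limit $\psi_\infty\in V_0$, and then show $\psi_\infty=u$ by a cutoff-and-annulus argument. Your route --- the identity $2\|e(v)\|^2_{L^2}=\|\nabla v\|^2_{L^2}+\|\mathrm{div}\,v\|^2_{L^2}$ for $v\in C^\infty_0(\R^d)^d$, extended by density --- is cleaner in principle, but the density of (mean-adjusted) compactly supported functions in $V_0$ for the seminorm $\|\nabla\cdot\|_{L^2(\R^d)}$ is exactly where the difficulty migrates: proving it requires the same kind of truncation by $\xi_R$ combined with the scaled Poincar\'e--Wirtinger inequality on $B_{2R}\setminus B_R$ that the paper uses in the last step of its Lemma~\ref{Korn4}. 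So your argument is correct in outline and buys a more classical presentation of the Korn step, but the phrase ``followed by density'' conceals the only genuinely delicate point of the whole proposition, and as written it leans on an appendix lemma that does not exist; you should either prove the density statement or prove the $V_0$ Korn inequality directly as the paper does.
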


\begin{proposition}\label{prop:existence_edp}
The unique solution $w_\sigma^{\A,A}$ in $V_0$ to~\eqref{variation_1} is also the unique solution in $V_0$ to~\eqref{p-corrector}.
\end{proposition}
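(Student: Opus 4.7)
The plan is to reduce Proposition~\ref{prop:existence_edp} to Proposition~\ref{prop:existence} by proving that, inside the Hilbert space $V_0$, the variational formulation~\eqref{variation_1} and the distributional PDE~\eqref{p-corrector} are equivalent. Since the variational problem admits a unique solution, this equivalence yields both existence and uniqueness of a $V_0$-solution to the PDE.

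\textbf{Step 1: variational $\Rightarrow$ PDE.} Let $w$ denote the variational solution. For any $\varphi\in\mathcal{D}(\R^d)^d$, introduce $c := |B|^{-1}\int_B \varphi\in\R^d$ and $\tilde\varphi := \varphi - c$. Then $\tilde\varphi\in V_0$ because $\nabla\tilde\varphi=\nabla\varphi$ is compactly supported and $\int_B \tilde\varphi = 0$. Plugging $\tilde\varphi$ into~\eqref{variation_1}, the volume integrals are unaffected (since $e(c)=0$), while the boundary term picks up the correction $c\cdot\int_\mathbb{S}(A\sigma)\,n$. But $A\sigma$ is a constant symmetric matrix and each component of $n$ is odd on $\mathbb{S}$, so $\int_\mathbb{S}(A\sigma)\,n = 0$; the correction vanishes. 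Undoing the integration by parts on $\R^d\setminus B$, using $\mathrm{div}(A\sigma) = 0$, rewrites $-\int_\mathbb{S}\varphi\cdot (A\sigma)\,n$ as $\int_{\R^d\setminus B} e(\varphi)\cdot A\sigma$, and one obtains precisely the distributional identity $\int_{\R^d}e(\varphi)\cdot\mathcal{A}^{\A,A}(\sigma+e(w))=0$, i.e.~\eqref{p-corrector}.

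\textbf{Step 2: uniqueness of PDE-solutions in $V_0$.} Let $w_1,w_2\in V_0$ both solve~\eqref{p-corrector} and set $u:=w_1-w_2$. Then $-\mathrm{div}(\mathcal{A}^{\A,A} e(u))=0$ in $\mathcal{D}'(\R^d)^d$. Pick $\chi_R\in\mathcal{D}(\R^d)$ with $\chi_R\equiv 1$ on $B_R$, $\mathrm{supp}\,\chi_R\subset B_{2R}$ and $|\nabla\chi_R|\leq C/R$. Since $u\in H^1_{\mathrm{loc}}(\R^d)^d$, the function $\chi_R u$ lies in $H^1(\R^d)^d$ with compact support, so by density of $\mathcal{D}$ in $H^1_c$ it is an admissible test function and $\int_{\R^d} e(\chi_R u)\cdot\mathcal{A}^{\A,A}e(u) = 0$. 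Expanding $e(\chi_R u) = \chi_R e(u) + \tfrac{1}{2}(\nabla\chi_R\otimes u+u\otimes\nabla\chi_R)$ and using~\eqref{eq:ellipticity} yields
$$\alpha\int_{\R^d}\chi_R|e(u)|^2 \;\leq\; \frac{C\beta}{R}\,\|u\|_{L^2(B_{2R}\setminus B_R)}\,\|e(u)\|_{L^2(B_{2R}\setminus B_R)}.$$
Since $\|e(u)\|_{L^2(\R^d)}<\infty$, the tail $\|e(u)\|_{L^2(B_{2R}\setminus B_R)}\to 0$; combined with the Korn/Hardy-type control from Appendix~\ref{sec:appendix} (which ensures that $R^{-1}\|u\|_{L^2(B_{2R}\setminus B_R)}$ stays bounded), the right-hand side vanishes as $R\to\infty$. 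Monotone convergence then gives $\int_{\R^d}|e(u)|^2=0$, so $u$ is a rigid motion; the condition $\nabla u\in L^2(\R^d)^{d\times d}$ forces $u$ to be constant, and the normalisation $\int_B u=0$ finally forces $u=0$.

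\textbf{Main obstacle.} Step~1 is essentially computational once one spots the mean-removal trick and the cancellation $\int_\mathbb{S} n = 0$. The real work is in Step~2: one must justify that $R^{-1}\|u\|_{L^2(B_{2R}\setminus B_R)}$ does not blow up at infinity for a generic $u\in V_0$. This is where the particular choice of functional space $V_0$ and the elasticity-specific Korn inequalities collected in Appendix~\ref{sec:appendix} become decisive — in the elasticity case a scalar Hardy inequality is not directly available, which is one of the technical differences with the diffusive setting of~\cite{PART1}.
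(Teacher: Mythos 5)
Your Step~1 is correct and is exactly the paper's argument (mean removal, then $\int_{\mathbb{S}} m_\varphi\cdot((A\sigma)n)=0$ because $A\sigma$ is constant and $\int_{\mathbb{S}} n=0$).

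Step~2 has a genuine gap. You test with $\chi_R u$ and then need $R^{-1}\|u\|_{L^2(B_{2R}\setminus B_R)^d}$ to stay bounded, which you attribute to a ``Korn/Hardy-type control from Appendix~\ref{sec:appendix}.'' No such result is in the appendix (it contains only the rigid-displacement lemma, the pointwise bound $\|e(u)\|\le\|\nabla u\|$, and Korn in $H^1_0$), and the claim is in fact \emph{false} for a generic $u\in V_0$: the only normalisation in $V_0$ is $\int_B u=0$, so $u$ may tend to a nonzero constant $c$ at infinity, in which case $R^{-1}\|u\|_{L^2(B_{2R}\setminus B_R)^d}\sim |c|\,R^{d/2-1}\to\infty$ for $d\ge 3$. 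Since the rate at which $\|e(u)\|_{L^2(B_{2R}\setminus B_R)^{d\times d}}\to 0$ is not quantified, the product on your right-hand side need not vanish. The paper avoids this by testing with $\xi_R^2\,(u-m_R(u))$, where $m_R(u)$ is the mean of $u$ over the annulus $B_{2R}\setminus B_R$: the scaled Poincar\'e--Wirtinger inequality on the annulus then gives $\|u-m_R(u)\|_{L^2(B_{2R}\setminus B_R)^d}\le C R\,\|\nabla u\|_{L^2(B_{2R}\setminus B_R)^{d\times d}}$, so the right-hand side is controlled by $C\,\|\nabla u\|^2_{L^2(B_{2R}\setminus B_R)^{d\times d}}$, which tends to $0$ because $\nabla u\in L^2(\R^d)^{d\times d}$. (The subtraction of the constant $m_R(u)$ is harmless in the energy identity since $e(m_R(u))=0$ and $m_R(u)$ only enters through the $\nabla\xi_R$ terms.) With this correction the rest of your Step~2 (rigid motion, then $\nabla u\in L^2$ and $\int_B u=0$ force $u=0$, i.e.\ Lemma~\ref{lem:use}) goes through.
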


We stress the fact that, contrarily to the scalar diffusion case~\cite{PART1}, the proof of the coercivity of the bilinear form $a$ is not obvious in the elasticity case and is postponed (along with the proof of Proposition~\ref{prop:existence}) until Section~\ref{sec:proexis}. The proof of Proposition~\ref{prop:existence_edp} is itself postponed until Section~\ref{sec:proexis_edp}. 

\medskip

In the sequel, we denote by 
\begin{equation}\label{eq:Energy}
\mathcal{E}_\sigma^{\A}(A) := \mathcal{E}_\sigma^{\A,A}\left(w_\sigma^{\A,A}\right) = \inf_{v\in V_0} \mathcal{E}_\sigma^{\A,A}(v)
\end{equation}
and by
\begin{equation}\label{E-sum}
\mathcal{E}^{\A}(A) := \sum\limits_{1\leq i \leq j \leq d} \mathcal{E}_{\sigma^{ij}}^{\A}(A), 
\end{equation}
where $\sigma^{ij}$ is given by~\eqref{Sij} for all $1\leq i \leq j \leq d$. Using~\eqref{variation_1}, it holds that
\begin{align}
\mathcal{E}_\sigma^{\A}(A)
&= \mathcal{E}_\sigma^{\A,A}\left(w_\sigma^{\A,A}\right)
\nonumber
\\
&= \frac{1}{|B|} \left[ \int_B \sigma \cdot \A \sigma - \int_B e\left(w_\sigma^{\A,A}\right) \cdot \A e\left(w_\sigma^{\A,A}\right) - \int_{\R^d\setminus B} e\left(w_\sigma^{\A,A}\right) \cdot A e\left(w_\sigma^{\A,A}\right) \right]
\label{elseE_1}
\\
&= \frac{1}{|B|} \left[ \int_B \sigma\cdot \A \left(\sigma+e\left(w_\sigma^{\A,A}\right)\right) - \int_{\mathbb{S}} w_\sigma^{\A,A} \cdot ((A\sigma) \, n) \right].
\label{elseE_2}
\end{align} 

\subsection{Proof of Proposition~\ref{prop:existence}}\label{sec:proexis}

We start by proving several intermediate lemmas, and often make use of some auxiliary results related to Korn's inequalities which are collected in Appendix~\ref{sec:appendix}. We recall that $B_R$ is the ball of $\R^d$ centered at the origin and of radius $R$. 

\begin{lemma}\label{lem:use}
For all $u$ in $V_0$, we have $e(u) = 0$ in $\R^d$ if and only if $u=0$.
\end{lemma}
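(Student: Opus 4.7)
The plan is to prove the nontrivial direction: if $u \in V_0$ satisfies $e(u) = 0$ in $\R^d$, then $u = 0$. The converse is immediate since $e$ is linear in $u$.

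First I would invoke the classical rigidity result: any distribution $u \in \mathcal{D}'(\R^d)^d$ satisfying $e(u) = 0$ is necessarily a rigid body motion, i.e., there exist $a \in \R^d$ and an antisymmetric matrix $M \in \R^{d\times d}$ such that $u(x) = a + Mx$ almost everywhere. This follows from the standard identity
\[
\partial_i \partial_j u_k = \partial_i e_{jk}(u) + \partial_j e_{ik}(u) - \partial_k e_{ij}(u),
\]
valid in the sense of distributions, which shows that $e(u) = 0$ implies $\partial_i\partial_j u_k = 0$ for all $i,j,k$. Hence $u$ is affine, and the constraint $e(u) = 0$ forces the linear part to be antisymmetric. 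Since $u \in L^2_{\rm loc}(\R^d)^d$, this pointwise representation holds almost everywhere.

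Next I would use the integrability condition $\nabla u \in L^2(\R^d)^{d\times d}$. For $u(x) = a + Mx$, the gradient $\nabla u$ is the constant matrix $M^T$. A nonzero constant matrix is not square-integrable over $\R^d$, so necessarily $M = 0$, and consequently $u$ is almost everywhere equal to the constant vector $a$.

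Finally, the normalization $\int_B u = 0$ built into the definition of $V_0$ reads $|B|\, a = 0$, which gives $a = 0$ and therefore $u = 0$, as claimed. The main (mild) subtlety is the first step, namely justifying the distributional version of the rigidity theorem under the low regularity available ($u \in L^2_{\rm loc}$ with $\nabla u \in L^2$); this is handled by the identity above, which makes sense in $\mathcal{D}'(\R^d)$ and yields an affine representative directly.
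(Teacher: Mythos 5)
Your proof is correct and follows essentially the same route as the paper: reduce to the rigidity theorem to get $u(x)=Mx+b$ with $M$ skew-symmetric, use $\nabla u\in L^2(\R^d)^{d\times d}$ to force $M=0$, and use $\int_B u=0$ to force $b=0$. The only (immaterial) difference is that you establish the rigidity step globally via the distributional identity for $\partial_i\partial_j u_k$, whereas the paper invokes its appendix lemma on each ball $B_R$ and glues the resulting local representations.
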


\begin{proof}
Let $R\geq1$. Let $u \in V_0$ with $e(u) = 0$. The function $u|_{B_R}$ belongs to $H^1(B_R)^d$. Since $e(u) = 0$, we can apply Lemma~\ref{lem:rigid} which yields that there exists a skew-symmetric matrix $M_R \in \R^{d\times d}$ and a vector $b_R \in \R^d$ such that $u(x) = M_R x + b_R$ for all $x \in B_R$. Using the fact that the mean of $u$ over $B \subset B_R$ vanishes, we obtain that $b_R = 0$. We thus have that, for any $R \geq 1$, $u(x) = M_R x$ for any $x\in B_R$. We infer that there exists a unique skew-symmetric matrix $M \in \R^{d\times d}$ such that $M_R = M$ for all $R>1$. We thus obtain $\nabla u = M$ in $\R^d$. Since $\nabla u \in L^2(\R^d)^{d\times d}$, we deduce that $M = 0$, and hence $u=0$.
\end{proof}

\begin{lemma}[Poincar\'e-Wirtinger inequality in $V_0$]\label{poincare2}
For all $R\geq 1$, there exists a constant $C(R)>0$ such that
\begin{equation}\label{eq:poincare}
\forall u \in V_0, \qquad \|u\|_{L^2(B_R)^d} \leq C(R) \, \|\nabla u\|_{L^2(B_R)^{d\times d}}.
\end{equation}
\end{lemma}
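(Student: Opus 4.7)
The plan is to prove this by a standard compactness-contradiction argument, exploiting the fact that the mean-zero condition $\int_B v = 0$ is preserved under strong $L^2$ convergence on $B_R$ (since $B \subset B_R$ for $R \geq 1$).

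First I would restate what must be shown: for fixed $R\geq 1$, the restriction map $V_0 \ni u \mapsto u|_{B_R}$ lands in the closed subspace $W_R := \{v \in H^1(B_R)^d : \int_B v = 0\}$ of $H^1(B_R)^d$, and on $W_R$ the seminorm $\|\nabla \cdot\|_{L^2(B_R)^{d\times d}}$ is equivalent to the full $H^1$-norm. Once this equivalence is established, the inequality~\eqref{eq:poincare} follows immediately for every $u \in V_0$ by restricting to $B_R$.

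Next I would argue the equivalence by contradiction. Suppose~\eqref{eq:poincare} fails. Then there exists a sequence $(u_n)_{n\in\N} \subset V_0$ such that
\begin{equation*}
\|u_n\|_{L^2(B_R)^d} = 1 \quad \text{and} \quad \|\nabla u_n\|_{L^2(B_R)^{d\times d}} \xrightarrow[n\to\infty]{} 0.
\end{equation*}
The sequence $(u_n|_{B_R})_{n\in\N}$ is therefore bounded in $H^1(B_R)^d$. By the Rellich--Kondrachov theorem, up to extraction, $u_n \to u^\star$ strongly in $L^2(B_R)^d$ and $\nabla u_n \rightharpoonup \nabla u^\star$ weakly in $L^2(B_R)^{d\times d}$. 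Because $\|\nabla u_n\|_{L^2(B_R)^{d\times d}}\to 0$, the weak limit satisfies $\nabla u^\star = 0$ in $B_R$, so $u^\star$ is a constant vector on $B_R$ (which is connected). From the strong $L^2$ convergence and $B\subset B_R$ we deduce $\int_B u^\star = \lim_{n\to\infty} \int_B u_n = 0$, which forces $u^\star = 0$. This contradicts $\|u^\star\|_{L^2(B_R)^d} = \lim \|u_n\|_{L^2(B_R)^d} = 1$.

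There is no real obstacle here: the argument is the classical compactness proof of a Poincaré--Wirtinger-type inequality, and the only mildly non-standard point is to note that the normalization $\int_B u = 0$ built into $V_0$ is preserved in the $L^2(B_R)$ limit because $B\subset B_R$ for $R\geq 1$, which is precisely why the hypothesis $R\geq 1$ appears in the statement.
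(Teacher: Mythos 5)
Your proof is correct and follows essentially the same compactness--contradiction argument as the paper: normalize $\|u_n\|_{L^2(B_R)^d}=1$ with vanishing gradient norm, extract a strongly $L^2$-convergent subsequence, identify the limit as a constant with zero mean over $B$, and derive a contradiction. The only cosmetic difference is that the paper treats $R=1$ separately via the classical Poincar\'e--Wirtinger inequality, whereas your argument handles all $R\geq 1$ uniformly; both are fine.
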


\begin{proof}
If $R= 1$, the classical Poincar\'e-Wirtinger inequality yields the desired result: there exists a constant $C_1>0$ such that, for all $u\in V_0$, 
$$
\left\| u \right\|_{L^2(B)^d} = \left\| u - \frac{1}{|B|} \int_B u \right\|_{L^2(B)^d} \leq C_1 \, \left\| \nabla u \right\|_{L^2(B)^{d\times d}}.
$$
Let now $R>1$ and let us argue by contradiction to prove~\eqref{eq:poincare}. We thus assume that there exists a sequence $(u_n)_{n\in\N^\star}$ of elements in $V_0$ such that 
$$
\left\|u_n\right\|_{L^2(B_R)^d} = 1, \qquad \left\|\nabla u_n \right\|_{L^2(B_R)^{d\times d}} \leq \frac{1}{n}. 
$$ 
Since $(u_n)_{n\in\N^\star}$ is bounded in $H^1(B_R)^d$, there exists $u_0\in H^1(B_R)^d$ such that, up to the extraction of a subsequence, $\dps u_n \mathop{\rightharpoonup}_{n\to +\infty} u_0$ weakly in $H^1(B_R)^d$. Hence, $\dps u_n \mathop{\rightarrow}_{n\to +\infty} u_0$ strongly in $L^2(B_R)^d$ and $\dps \nabla u_n \mathop{\rightharpoonup}_{n\to +\infty} \nabla u_0$ weakly in $L^2(B_R)^{d\times d}$. The above estimates on $u_n$ yield
$$
\|u_0\|_{L^2(B_R)^d} = 1, \qquad \left\| \nabla u_0 \right\|_{L^2(B_R)^{d\times d}} \leq \liminf_{n\to +\infty} \|\nabla u_n \|_{L^2(B_R)^{d\times d}} = 0. 
$$
Therefore, $u_0$ is equal to a constant vector on $B_R$. Moreover, using the Cauchy-Schwarz inequality, we have
$$
\left| \int_B u_n - \int_B u_0 \right| \leq |B|^{1/2} \ \|u_n -u_0\|_{L^2(B)^d} \leq |B|^{1/2} \ \|u_n -u_0\|_{L^2(B_R)^d} \mathop{\longrightarrow}_{n\to +\infty} 0.
$$  
The mean over $B$ of $u_0$ hence vanishes, and thus $u_0=0$, which contradicts the fact that $\|u_0\|_{L^2(B_R)^d} = 1$. This concludes the proof.
\end{proof}

The following lemma states that a Korn's inequality holds for functions in $V_0$.
\begin{lemma}[Korn's inequality in $V_0$] \label{Korn4}
For any $u$ in $V_0$, we have 
\begin{equation}\label{Kornall}
  \| \nabla u \|_{L^2(\R^d)^{d\times d}} \leq \sqrt{2} \ \| e(u) \|_{L^2(\R^d)^{d\times d}}.
\end{equation}
\end{lemma}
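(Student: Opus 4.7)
The plan is to establish the integral identity
\begin{equation*}
2\int_{\R^d}|e(v)|^2 = \int_{\R^d}|\nabla v|^2 + \int_{\R^d}({\rm div}\,v)^2
\end{equation*}
first for $v \in C_c^\infty(\R^d)^d$ and then to extend it to any $u \in V_0$ by truncation; since the divergence term is non-negative, this identity immediately yields $\|\nabla u\|_{L^2(\R^d)^{d\times d}}^2 \leq 2\,\|e(u)\|_{L^2(\R^d)^{d\times d}}^2$, which is exactly~\eqref{Kornall}.

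For $v \in C_c^\infty(\R^d)^d$, the identity rests on the pointwise algebraic relation $2\,|e(v)|^2 = |\nabla v|^2 + \sum_{i,j}\partial_j v_i\,\partial_i v_j$ together with two integrations by parts (justified by compact support), which produce $\int_{\R^d}\partial_j v_i\,\partial_i v_j\,dx = \int_{\R^d}\partial_i v_i\,\partial_j v_j\,dx$, so the cross term reassembles into $\int_{\R^d}({\rm div}\,v)^2\,dx$.

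To extend this to $u \in V_0$, the principal obstacle is that by definition $\nabla u \in L^2(\R^d)^{d\times d}$ but $u$ itself is only in $L^2_{\rm loc}(\R^d)^d$, so no global integration by parts can be performed directly on $u$. I would introduce a smooth cutoff $\chi_R \in C_c^\infty(\R^d)$ with $0 \leq \chi_R \leq 1$, $\chi_R \equiv 1$ on $B_R$, $\mathrm{supp}\,\chi_R \subset B_{2R}$ and $\|\nabla\chi_R\|_{L^\infty(\R^d)} \leq C/R$, and then define
\begin{equation*}
c_R := \frac{1}{|B_{2R}\setminus B_R|}\int_{B_{2R}\setminus B_R} u, \qquad u_R := \chi_R\,(u - c_R).
\end{equation*}
Then $u_R$ is compactly supported in $\overline{B_{2R}}$ and belongs to $H^1(\R^d)^d$, hence may be approximated in the $H^1(\R^d)^d$-norm by functions of $C_c^\infty(\R^d)^d$, so that the identity of the first step is inherited by $u_R$.

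The final step consists in passing to the limit $R \to +\infty$. Writing $\nabla u_R = \chi_R\,\nabla u + (u - c_R)\otimes \nabla \chi_R$, the first term converges to $\nabla u$ in $L^2(\R^d)^{d\times d}$ by dominated convergence, while the squared $L^2(\R^d)^{d\times d}$-norm of the second is bounded by $(C^2/R^2) \int_{B_{2R}\setminus B_R}|u - c_R|^2$. By the scaling-invariant Poincaré-Wirtinger inequality on the annulus $B_{2R}\setminus B_R$ applied to $u - c_R$ (the corresponding constant scales as $R^2$), this quantity is in turn dominated by $C'\int_{B_{2R}\setminus B_R}|\nabla u|^2$, which tends to $0$ since $\nabla u \in L^2(\R^d)^{d\times d}$. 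Consequently $\nabla u_R \to \nabla u$ in $L^2(\R^d)^{d\times d}$, hence also $e(u_R) \to e(u)$ and ${\rm div}\,u_R \to {\rm div}\,u$ in $L^2(\R^d)$. Taking the limit in the identity for $u_R$ and discarding the non-negative divergence contribution yields~\eqref{Kornall}. The subtraction of $c_R$ is the crucial ingredient: it is tuned precisely so that the $R^{-1}$ decay of $|\nabla\chi_R|$ compensates the $R$-growth of the Poincaré constant on the annulus.
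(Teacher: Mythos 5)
Your proof is correct, but it follows a genuinely different route from the paper's. The paper does not use the integration-by-parts identity $2\int|e(v)|^2=\int|\nabla v|^2+\int(\operatorname{div}v)^2$ at all: instead it introduces, for each $k$, the auxiliary Dirichlet problem $-\operatorname{div}(e(\phi_k))=-\operatorname{div}(e(u))$ on $B_k$, applies the Korn inequality in $H^1_0$ (Lemma~\ref{Korn2}) to $\phi_k$, extracts a weak limit $\psi_\infty\in V_0$ of the recentred functions $\psi_k$, and finally identifies $\psi_\infty$ with $u$ via a Caccioppoli-type estimate on annuli. Your argument short-circuits all of this: the exact identity for compactly supported fields (which is the standard proof of Korn's first inequality, and in particular of Lemma~\ref{Korn2} itself) is transferred directly to $u\in V_0$ by the cutoff $u_R=\chi_R(u-c_R)$, and the only analytic input is the scaled Poincar\'e--Wirtinger inequality on the annulus $B_{2R}\setminus B_R$ — the very same device the paper uses in its last step, and again in the proofs of Proposition~\ref{prop:existence_edp} and Lemma~\ref{lem:uniqueness-neumann}. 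All the steps check out: $u_R\in H^1(\R^d)^d$ with compact support since $u\in H^1(B_{2R})^d$, the identity is stable under $H^1$-approximation because both sides are continuous quadratic forms of $\nabla u_R$ in $L^2$, the term $\chi_R\nabla u\to\nabla u$ by dominated convergence, and the commutator term is $O\bigl(\|\nabla u\|_{L^2(B_{2R}\setminus B_R)}\bigr)\to 0$ precisely because of the mean subtraction, as you rightly emphasize. Your approach is shorter and yields more, namely the exact identity $2\|e(u)\|^2_{L^2(\R^d)^{d\times d}}=\|\nabla u\|^2_{L^2(\R^d)^{d\times d}}+\|\operatorname{div}u\|^2_{L^2(\R^d)}$ for every $u\in V_0$, of which \eqref{Kornall} is an immediate consequence; the paper's construction is heavier but its weak-limit machinery is reused elsewhere. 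The only caveat is that the connectedness of the annulus, needed for the Poincar\'e--Wirtinger step, requires $d\geq 2$ — but the $d=1$ case is trivial since then $e(u)=\nabla u$, and the paper's own proof relies on the same annulus argument.
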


\begin{proof}
Let $u \in V_0$. For all $k\in\N^\star$, let us denote by $B_k \subset \R^d$ the ball centered at $0$ and of radius $k$. Using the Lax-Milgram lemma (and Lemma~\ref{Korn2} to show the coercivity of the associated bilinear form), it is easy to see that there exists a unique solution $\phi_k \in H_0^1(B_k)^d$ to
$$
- {\rm div} \big( e(\phi_k) \big) = - {\rm div} \big( e(u) \big) \quad \mbox{in $B_k$}.
$$
Since $\phi_k \in H_0^1(B_k)^d$, we can extend $\phi_k$ to a function in $H^1(\R^d)^d$ by setting $\phi_k = 0$ in $\R^d\setminus B_k$. Then, we have 
$$
\|e(\phi_k)\|_{L^2(B_k)^{d\times d}}^2 = \int_{B_k} e(\phi_k) \cdot e(\phi_k) = \int_{B_k} e(\phi_k) \cdot e(u) \leq \|e(\phi_k)\|_{L^2(B_k)^{d\times d}} \|e(u)\|_{L^2(B_k)^{d\times d}}
$$
and thus
$$
\| e(\phi_k) \|_{L^2(B_k)^{d\times d}} \leq \|e(u)\|_{L^2(B_k)^{d\times d}}.
$$
Using Lemma~\ref{Korn2}, we write
$$
\|\nabla \phi_k \|_{L^2(\R^d)^{d\times d}} = \|\nabla \phi_k \|_{L^2(B_k)^{d\times d}} \leq \sqrt{2} \, \|e(\phi_k)\|_{L^2(B_k)^{d\times d}} \leq \sqrt{2} \, \|e(u)\|_{L^2(B_k)^{d\times d}} \leq \sqrt{2} \, \|e(u)\|_{L^2(\R^d)^{d\times d}}.
$$
Consider now $\dps \psi_k = \phi_k - \int_B \phi_k$. Since $\|\nabla \psi_k \|_{L^2(\R^d)^{d\times d}} = \|\nabla \phi_k \|_{L^2(\R^d)^{d\times d}}$, we deduce from the above estimate that 
\begin{equation}\label{inequality-e-grad}
\|\nabla \psi_k \|_{L^2(\R^d)^{d\times d}} \leq \sqrt{2} \, \|e(u)\|_{L^2(\R^d)^{d\times d}},
\end{equation}
and the sequence $(\nabla \psi_k)_{k \in \N^\star}$ is thus bounded in $L^2(\R^d)^{d\times d}$. In addition, $\dps \int_B \psi_k = 0$ holds, and hence $\psi_k \in V_0$. For any $R>0$, Lemma~\ref{poincare2} provides the existence of a constant $C(R)>0$ such that  
$$
\forall k \in \N^\star, \qquad \left\|\psi_k \right\|_{H^1(B_R)^d} \leq C(R) \left\|\nabla \psi_k\right\|_{L^2(B_R)^{d\times d}}. 
$$
Collecting this bound with~\eqref{inequality-e-grad}, we obtain that $\psi_k$ is bounded in $H^1(B_R)^d$, and thus that there exists some $\psi_\infty^R \in H^1(B_R)^d$ such that (up to a subsequence extraction) $\dps \psi_k \mathop{\longrightarrow}_{k\to +\infty} \psi_\infty^R$ strongly in $L^2(B_R)^d$ and $\dps \nabla \psi_k \mathop{\rightharpoonup}_{k\to +\infty} \nabla \psi_\infty^R$ weakly in $L^2(B_R)^{d\times d}$. For all $R'>R>1$, we have $\psi_\infty^{R'}|_{B_R} = \psi_\infty^R$ so that the function $\psi_\infty \in H^1_{\rm loc}(\R^d)^d$ defined by $\psi_\infty(x) := \psi^R_\infty(x)$ for all $x \in B_R$ is well-defined. According to the definition of $\psi_\infty$, we have $\dps \psi_k \mathop{\longrightarrow}_{k\to +\infty} \psi_\infty$ in $L^2_{\rm loc}(\R^d)^d$ and $\dps \nabla \psi_k \mathop{\rightharpoonup}_{k\to +\infty} \nabla \psi_\infty$ in $L^2_{\rm loc}(\R^d)^{d\times d}$.

Let us show that $\psi_\infty \in V_0$. The estimate~\eqref{inequality-e-grad} yields that there exists $T_\infty \in L^2(\R^d)^{d\times d}$ such that, up to the extraction of a subsequence, $\dps \nabla \psi_k \mathop{\rightharpoonup}_{k\to +\infty} T_\infty$ in $L^2(\R^d)^{d\times d}$. The uniqueness of the weak limit of $\nabla \psi_k$ in $L^2_{\rm loc}(\R^d)^{d\times d}$ implies that $\nabla \psi_\infty = T_\infty$. In addition, we have 
$$
\left| \int_B \psi_k - \int_B \psi_\infty \right| \leq |B|^{1/2} \left\|\psi_k - \psi_\infty \right\|_{L^2(B)^d} \mathop{\longrightarrow}_{k\to +\infty} 0,
$$
and thus the mean of $\psi_\infty$ in $B$ vanishes. This implies that $\psi_\infty\in V_0$. Passing to the limit $k \to \infty$ in~\eqref{inequality-e-grad}, we also obtain
\begin{equation}\label{inequality-e-grad_bis}
  \|\nabla \psi_\infty \|_{L^2(\R^d)^{d\times d}} \leq \sqrt{2} \, \|e(u)\|_{L^2(\R^d)^{d\times d}}.
\end{equation}
 
\medskip 

The last step of the proof consists in proving that $\psi_\infty = u$. For any $\eta \in C_0^\infty(\R^d)^d$, let $N$ be such that ${\rm Supp} \; \eta \subset B_N$. Then, for all $k \geq N$, using the fact that $\eta \in H^1_0(B_k)^d$ and that $e(\psi_k) = e(\phi_k)$, we have that
$$
\left \langle -{\rm div} \, e(u), \eta \right\rangle_{\mathcal{D}'(\R^d)^d, \mathcal{D}(\R^d)^d} = \int_{B_N} e(\eta) \cdot e(u) = \int_{B_N} e(\eta) \cdot e(\phi_k) = \int_{B_N} e(\eta) \cdot e(\psi_k).
$$
Besides, it holds that 
$$
\lim_{k\to +\infty} \int_{B_N} e(\eta) \cdot e(\psi_k) = \int_{B_N} e(\eta) \cdot e(\psi_\infty) = \left\langle -{\rm div} \, e(\psi_\infty), \eta \right\rangle_{\mathcal{D}'(\R^d)^d, \mathcal{D}(\R^d)^d}.
$$
We hence get that
\begin{equation}\label{substraction}
- {\rm div} \big( e(u-\psi_\infty) \big) = 0 \quad \text{in $\mathcal{D}'(\R^d)^d$}. 
\end{equation}
For any $R>0$, let $\xi_R \in C^\infty_0(\R^d)$ satisfying $0\leq \xi_R \leq 1$,
\begin{equation} \label{eq:def_xi_R}
\xi_R(x)=\begin{cases}
1 & \mbox{ if $x \in B_R$}, \\
0 & \mbox{ if $x \notin B_{2R}$},
\end{cases}
\end{equation}
and $\left\| \nabla \xi_R \right\|_{L^\infty(B_{2R}\setminus B_R)^d} < C/R$ for some constant $C>0$ independent of $R$. Let us denote by $\dps a_R = \frac{1}{|B_{2R}\setminus B_R|} \int_{B_{2R}\setminus B_R} (u-\psi_\infty)$. Multiplying~\eqref{substraction} by $(u - \psi_\infty - a_R) \, \xi_R$ and integrating by parts yields
\begin{align*}
  0
  &=
  \int_{\R^d} e\big((u-\psi_\infty-a_R) \, \xi_R \big) \cdot e(u-\psi_\infty)
  \\
  &=
  \int_{B_R} e(u-\psi_\infty) \cdot e (u-\psi_\infty) + \int_{B_{2R} \setminus B_R} \xi_R \, e(u-\psi_\infty) \cdot e (u-\psi_\infty)
  \\
  & \qquad \qquad + \int_{B_{2R} \setminus B_R} \big[ \nabla \xi_R \otimes (u-\psi_\infty-a_R) \big] \cdot e(u-\psi_\infty),
\end{align*}
where $\otimes$ denotes the tensor product: for any vectors $v$ and $w$ and any $1 \leq i,j \leq d$, we have $(v \otimes w)_{ij} = v_i \, w_j$. We deduce from the above relation that
\begin{equation} \label{eq:covid}
\left\| e(u-\psi_\infty) \right\|^2_{L^2(B_R)^{d\times d}} \leq \left\| \nabla \xi_R \right\|_{L^\infty(\R^d)^d} \left\| e(u-\psi_\infty) \right\|_{L^2(B_{2R} \setminus B_R)^{d\times d}} \left\| u-\psi_\infty-a_R \right\|_{L^2(B_{2R} \setminus B_R)^d}. 
\end{equation}
Using the Poincar\'e-Wirtinger inequality together with a classical scaling argument, there exists a constant $C>0$ independent of $R$ such that
\begin{align*}
  \left\| u-\psi_\infty-a_R \right\|_{L^2(B_{2R} \setminus B_R)^d}
  &=
  \left\| u - \psi_\infty - \frac{1}{|B_{2R}\setminus B_R|} \int_{B_{2R}\setminus B_R} (u-\psi_\infty) \right\|_{L^2(B_{2R}\setminus B_R)^d}
  \\
  & \leq
  C \, R \left\| \nabla (u-\psi_\infty ) \right\|_{L^2(B_{2R}\setminus B_R)^{d\times d}}.
\end{align*}
Hence, using Lemma~\ref{simple}, we deduce from~\eqref{eq:covid} that
\begin{align*}
  \left\| e(u-\psi_\infty) \right\|^2_{L^2(B_R)^{d\times d}}
  & \leq
  C \left\| e(u-\psi_\infty) \right\|_{L^2(B_{2R} \setminus B_R)^{d\times d}} \left\| \nabla (u-\psi_\infty) \right\|_{L^2(B_{2R}\setminus B_R)^{d\times d}}
  \\
  & \leq
  C \left\| \nabla (u-\psi_\infty ) \right\|_{L^2(B_{2R}\setminus B_R)^{d\times d}}^2
\end{align*}
and thus
$$
\left\| e(u-\psi_\infty) \right\|_{L^2(B_R)^{d\times d}} \leq C \left\| \nabla (u-\psi_\infty) \right\|_{L^2(B_{2R}\setminus B_R)^{d\times d}}.
$$
Since $\nabla (u-\psi_\infty) \in L^2(\R^d)^{d\times d}$, we have $\dps \left\| \nabla (u-\psi_\infty) \right\|_{L^2(B_{2R}\setminus B_R)^{d\times d}} \mathop{\longrightarrow}_{R\to +\infty} 0$. Passing to the limit $R \to \infty$ in the above estimate, we therefore obtain $\left\| e(u-\psi_\infty) \right\|_{L^2(\R^d)^{d\times d}}=0$. Using that $u-\psi_\infty \in V_0$ and Lemma~\ref{lem:use}, this implies that $u=\psi_\infty$. Inserting this in~\eqref{inequality-e-grad_bis}, we deduce~\eqref{Kornall}. This concludes the proof of Lemma~\ref{Korn4}.
\end{proof}

We are now in position to prove Proposition~\ref{prop:existence}. 

\begin{proof}[Proof of Proposition~\ref{prop:existence}]
The bilinear form $a^{\A,A}$ defined by~\eqref{eq:linearforms} is obviously continuous in $V_0$. It is coercive in $V_0$ in view of Lemma~\ref{Korn4}. The linear form $b_\sigma^{\A,A}$ defined by~\eqref{eq:linearforms} is continuous in $V_0$, using the continuity of the trace operator from $H^1(B)^d$ to $L^2(\mathbb{S})^d$. We are thus in position to apply the Lax-Milgram lemma, which yields the existence and uniqueness of a solution to~\eqref{variation_1} in $V_0$. Since the bilinear form $a^{\A,A}$ is symmetric, the variational formulation~\eqref{variation_1} is equivalent to the energetic formulation~\eqref{u_chi}.
\end{proof}

\subsection{Proof of Proposition~\ref{prop:existence_edp}} \label{sec:proexis_edp}

We first prove that the unique solution $w_\sigma^{\A,A}$ in $V_0$ to~\eqref{variation_1} is also a solution to~\eqref{p-corrector}. Considering some $\varphi \in C^\infty_0(\R^d)$, we introduce $v = \varphi - m_\varphi$ with $\dps m_\varphi = \frac{1}{|B|} \int_B \varphi$, which indeed belongs to $V_0$. We then have
$$
a^{\A,A}\left(w_\sigma^{\A,A},\varphi\right) = a^{\A,A}\left(w_\sigma^{\A,A},v\right) = b_\sigma^{\A,A}(v) = b_\sigma^{\A,A}(\varphi) - \int_{\mathbb{S}} m_\varphi \cdot ((A \sigma) \, n),
$$
and the last term vanishes because $m_\varphi$, $A$ and $\sigma$ are constant on $\mathbb{S}$. We thus get that $\dps a^{\A,A}\left(w_\sigma^{\A,A},\varphi\right) = b_\sigma^{\A,A}(\varphi)$ for any $\varphi \in C^\infty_0(\R^d)$, which implies that $w_\sigma^{\A,A}$ is a solution to~\eqref{p-corrector}.

\medskip

We next prove that~\eqref{p-corrector} has a unique solution in $V_0$. To that aim, it is sufficient to prove that, if $w \in V_0$ is such that
\begin{equation}\label{p-corrector_homogene}
- {\rm div} \left( \mathcal{A}^{\A,A} \, e(w) \right) = 0 \quad \text{in $\mathcal{D}'(\R^d)^d$},
\end{equation}
then $w=0$. We essentially use the same arguments as those below~\eqref{substraction} and again introduce the function $\xi_R$ defined by~\eqref{eq:def_xi_R}. We multiply~\eqref{p-corrector_homogene} by $v = \xi_R^2 \, \big( w - m_R(w) \big)$, where $m_R(w)$ is the mean of $w$ on $B_{2R} \setminus B_R$, and deduce that
$$
0 = \int_{B_{2R}} \xi_R^2 \ e(w) \cdot \mathcal{A}^{\A,A} e(w) + 2 \int_{B_{2R}} \xi_R \, \left[ \nabla \xi_R \otimes \big(w - m_R(w)\big) \right] \cdot \mathcal{A}^{\A,A} e(w).
$$
Using the Cauchy-Schwarz inequality and next~\eqref{eq:ellipticity}, we obtain
\begin{align*}
\int_{B_{2R}} \xi_R^2 \ e(w) \cdot \mathcal{A}^{\A,A} e(w)
&\leq
4 \int_{B_{2R}} \left[ \nabla \xi_R \otimes \big(w - m_R(w)\big) \right] \cdot \mathcal{A}^{\A,A} \left[ \nabla \xi_R \otimes \big(w - m_R(w)\big) \right]
\\
&\leq
\frac{4 C \beta}{R^2} \int_{B_{2R} \setminus B_R} \big| w - m_R(w) \big|^2.
\end{align*}
We next bound the above right-hand side using the Poincar\'e-Wirtinger inequality on $B_{2R} \setminus B_R$, and we bound the left-hand side using~\eqref{eq:ellipticity} and the properties of $\xi_R$. We thus deduce that, for any $R$,
$$
\| e(w) \|^2_{L^2(B_R)^{d \times d}} \leq C \, \| \nabla w \|^2_{L^2(B_{2R} \setminus B_R)^{d \times d}}.
$$
Using that $\nabla w$ belongs to $L^2(\R^d)^{d \times d}$, we are in position to pass to the limit $R \to \infty$ in the above estimate, which yields that $\| e(w) \|^2_{L^2(\R^d)^{d \times d}} = 0$. Using Lemma~\ref{lem:use}, we deduce that $w=0$, which concludes the proof. In passing, note that we do not have used the fact that $\mathcal{A}^{\A,A}$ is constant on $\R^d \setminus \overline{B}$, but only the fact that $\mathcal{A}^{\A,A} \in L^\infty(\R^d,\cM)$.

\subsection{A useful auxiliary result}

We now show the following result, which will be useful in the sequel. We recall that $B$ is the unit open ball of $\R^d$ centered at the origin.

\begin{lemma} \label{lem:uniqueness-neumann}
  Let $A \in \cM$ and let $\dps w \in \left\{ v \in L_{\rm loc}^2(\R^d \setminus \overline{B})^d, \ \ \nabla v \in L^2(\R^d \setminus \overline{B})^{d \times d} \right\}$ such that $- {\rm div} \left( A \, e(w) \right) = 0$ in $\mathcal{D}'(\R^d \setminus \overline{B})^d$ and $(A \, e(w)) \, n = 0$ on $\mathbb{S}$. Then $w$ is a constant on $\R^d \setminus \overline{B}$.
\end{lemma}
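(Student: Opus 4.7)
The plan is to adapt to the exterior domain $\R^d \setminus \overline{B}$ the weighted cut-off argument already used in Section~\ref{sec:proexis_edp} to prove uniqueness in Proposition~\ref{prop:existence_edp}, exploiting the homogeneous Neumann boundary condition $(A\,e(w))\,n = 0$ on $\mathbb{S}$ to annihilate the surface term. I would first fix $R > 1$ and introduce the cut-off $\xi_R$ from~\eqref{eq:def_xi_R}, together with the annulus mean $m_R(w) := \frac{1}{|B_{2R} \setminus B_R|} \int_{B_{2R} \setminus B_R} w$. A preliminary observation is that, thanks to the Poincaré-Wirtinger inequality applied on any bounded Lipschitz piece of $\R^d \setminus \overline{B}$, the assumption $\nabla w \in L^2(\R^d \setminus \overline{B})^{d \times d}$ forces $w \in H^1_{\rm loc}(\R^d \setminus \overline{B})^d$, so that the function $v := \xi_R^2 \, (w - m_R(w))$, supported in $\overline{B_{2R}} \setminus \overline{B}$, is an admissible test function for the weak equation.

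Testing $- {\rm div} \bigl( A \, e(w) \bigr) = 0$ against $v$ and integrating by parts on $\R^d \setminus \overline{B}$, the boundary contribution on $\mathbb{S}$ is proportional to $\int_{\mathbb{S}} v \cdot \bigl( (A \, e(w))\, n \bigr)$, which vanishes by hypothesis, leaving
$$
\int_{\R^d \setminus \overline{B}} \xi_R^2 \, e(w) \cdot A\, e(w) + 2 \int_{\R^d \setminus \overline{B}} \xi_R \, \bigl[ \nabla \xi_R \otimes \bigl( w - m_R(w) \bigr) \bigr] \cdot A\, e(w) = 0.
$$
Applying Cauchy-Schwarz on the cross term, the ellipticity and boundedness~\eqref{eq:ellipticity} of $A$, the bound $\| \nabla \xi_R \|_{L^\infty} \leq C/R$, and the scaled Poincaré-Wirtinger inequality on the annulus $B_{2R} \setminus B_R$ (exactly as below~\eqref{eq:covid}), I would obtain
$$
\bigl\| e(w) \bigr\|^2_{L^2(B_R \setminus \overline{B})^{d \times d}} \leq C \, \bigl\| \nabla w \bigr\|^2_{L^2(B_{2R} \setminus B_R)^{d \times d}}
$$
with $C$ independent of $R$. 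Since $\nabla w \in L^2(\R^d \setminus \overline{B})^{d \times d}$, the right-hand side vanishes as $R \to \infty$, so $e(w) = 0$ almost everywhere in $\R^d \setminus \overline{B}$.

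To conclude, I would invoke the rigid-motion characterization of the kernel of $e$: the domain $\R^d \setminus \overline{B}$ is connected for $d \geq 2$, so by applying Lemma~\ref{lem:rigid} on an exhaustion by annular regions $B_k \setminus \overline{B}$ and matching the rigid motions on the overlaps (as in the proof of Lemma~\ref{lem:use}), there exist a skew-symmetric matrix $M \in \R^{d \times d}$ and $b \in \R^d$ such that $w(x) = M x + b$ for every $x \in \R^d \setminus \overline{B}$. Then $\nabla w \equiv M$ is a constant matrix on a set of infinite Lebesgue measure, and the requirement $\nabla w \in L^2(\R^d \setminus \overline{B})^{d \times d}$ forces $M = 0$, so that $w \equiv b$ is constant. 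The main subtlety, rather than a conceptual obstacle, is the bookkeeping needed because $w$ is not assumed to lie in any global Lebesgue space: one has to justify that the test function $\xi_R^2 (w - m_R(w))$ is legitimate and that the surface integral on $\mathbb{S}$ is well defined through the trace theorem on the bounded piece $B_{2R} \setminus \overline{B}$, after which the argument is a direct transposition of the one already carried out in Section~\ref{sec:proexis_edp}.
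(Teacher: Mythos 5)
Your proposal is correct and follows essentially the same route as the paper's own proof: test against $\xi_R^2\,(w - m_R(w))$, use the Neumann condition to kill the boundary term on $\mathbb{S}$, derive the Caccioppoli-type estimate $\| e(w) \|^2_{L^2(B_R \setminus \overline{B})^{d\times d}} \leq C \, \| \nabla w \|^2_{L^2(B_{2R}\setminus B_R)^{d\times d}}$, let $R \to \infty$, and conclude via the rigid-displacement characterization and the integrability of $\nabla w$. The additional remarks on local $H^1$ regularity and the trace justification are sound but do not change the argument.
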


The proof below does not use the fact that $A$ is constant on $\R^d \setminus \overline{B}$. This result could thus be generalized (although we do not need this generalization in this article) to fourth-order tensor fields valued in $\cM$. 

\begin{proof}
We again follow the arguments of the proof of Proposition~\ref{prop:existence_edp}: for any $R \geq 1$, we multiply the equation by $v = \xi_R^2 \, \big( w - m_R(w) \big)$, where $m_R(w)$ is the mean of $w$ on $B_{2R} \setminus B_R$ and where $\xi_R$ is defined by~\eqref{eq:def_xi_R}. We thus have
$$
0
=
- \int_{B_{2R} \setminus B} \xi_R^2 \, \big( w - m_R(w) \big) \, {\rm div} \left( A \, e(w) \right)
=
\int_{B_{2R} \setminus B} e\left( \xi_R^2 \, \big( w - m_R(w) \big) \right) \cdot A \, e(w)
$$
where the boundary term in the integration by parts vanishes since $(A \, e(w)) \, n = 0$ on $\mathbb{S}$. We thus deduce that
$$
0 = \int_{B_{2R} \setminus B} \xi_R^2 \ e(w) \cdot A e(w) + 2 \int_{B_{2R} \setminus B} \xi_R \, \left[ \nabla \xi_R \otimes \big(w - m_R(w)\big) \right] \cdot A e(w),
$$
and therefore, using the Cauchy-Schwarz inequality and the fact that $A \in \cM$, that
\begin{align*}
\int_{B_{2R} \setminus B} \xi_R^2 \ e(w) \cdot A e(w)
&\leq
4 \int_{B_{2R} \setminus B} \left[ \nabla \xi_R \otimes \big(w - m_R(w)\big) \right] \cdot A \left[ \nabla \xi_R \otimes \big(w - m_R(w)\big) \right]
\\
&\leq
\frac{4 C \beta}{R^2} \int_{B_{2R} \setminus B_R} \big| w - m_R(w) \big|^2.
\end{align*}
We bound the above right-hand side using the Poincar\'e-Wirtinger inequality on $B_{2R} \setminus B_R$ and the above left-hand side using~\eqref{eq:ellipticity} and the properties of $\xi_R$. We thus deduce that, for any $R \geq 1$,
$$
\| e(w) \|^2_{L^2(B_R \setminus \overline{B})^{d \times d}} \leq C \, \| \nabla w \|^2_{L^2(B_{2R} \setminus B_R)^{d \times d}}.
$$
Using that $\nabla w$ belongs to $L^2(\R^d \setminus \overline{B})^{d \times d}$, we are in position to pass to the limit $R \to \infty$ in the above estimate, which yields that $\| e(w) \|^2_{L^2(\R^d \setminus \overline{B})^{d \times d}} = 0$, and hence $e(w) = 0$ in $\R^d \setminus \overline{B}$. For any $R \geq 1$, we thus have that $w$ is a rigid displacement of $B_R \setminus \overline{B}$, and hence, using Lemma~\ref{lem:rigid}, that $w(x) = M_R x + b_R$ in $B_R \setminus \overline{B}$. This implies that $\nabla w = M_R$ in $B_R \setminus \overline{B}$, for any $R \geq 1$. We thus infer that there exists a unique skew-symmetric matrix $M$ such that $M_R = M$, and hence $\nabla w = M$ in $\R^d \setminus \overline{B}$. Since $\nabla w \in L^2(\R^d \setminus \overline{B})^{d \times d}$, we deduce that $M=0$. We thus get that $w(x) = b_R$ in $B_R \setminus \overline{B}$ for any $R \geq 1$, which concludes the proof. 
\end{proof}

\subsection{Properties of the embedded corrector problem}

In this section, we prove some auxiliary results on the solutions to problems of the form~\eqref{p-corrector}. These results will be useful in our analysis below and are direct extensions of similar results in the scalar case (see~\cite{PART1}). We start with Lemma~\ref{converge-in-out}, which is a direct extension of~\cite[Lemma~3.1]{PART1}. 

\begin{lemma}
\label{converge-in-out}
Let $(\A^N)_{N \in \N}$ and $(A^N)_{N \in \N}$ be two sequences such that, for any $N$, $\A^N \in L^\infty(B,\mathcal{M})$ and $A^N \in \mathcal{M}$. We assume that there exist $\A^\star \in L^\infty(B,\mathcal{M})$ and $A \in \mathcal{M}$ such that $(\A^N)_{N \in \N}$ converges as $N$ goes to infinity in the sense of homogenization to $\A^\star$ and such that $\dps A^N \mathop{\longrightarrow}_{N\to +\infty} A$. Let $w_\sigma^{\A^N,A^N} \in V_0$ be the unique solution to the problem
\begin{equation}\label{pro-1}
- {\rm div} \left[ \cA^{\A^N,A^N} \left( \sigma + e\left(w_\sigma^{\A^N,A^N}\right) \right) \right] = 0 \quad \text{in $\mathcal{D}'(\R^d)^d$}. 
\end{equation}
Then, $\left(w_\sigma^{\A^N,A^N}\right)_{N \in \N}$ weakly converges in $H^1_{\rm loc}(\R^d)^d$ to $w_\sigma^{\A^\star,A}$, which is the unique solution in $V_0$ to
$$
- {\rm div} \left[ \cA^{\A^\star,A} \left( \sigma +e\left(w_\sigma^{\A^\star,A} \right) \right) \right] = 0 \quad \mbox{in $\mathcal{D}'(\R^d)^d$},
$$
where $\cA^{\A^\star,A}(x) = \A^\star(x)$ in $B$ and $\cA^{\A^\star,A}(x) = A$ in $\R^d \setminus B$. Moreover, we have 
$$
\cA^{\A^N,A^N} \left( \sigma + e\left(w_\sigma^{\A^N,A^N}\right) \right) \mathop{\rightharpoonup}_{N \to +\infty} \cA^{\A^\star,A} \left( \sigma + e\left(w_\sigma^{\A^\star,A}\right) \right) \quad \mbox{weakly in $L^2_{\rm loc}(\R^d)^{d\times d}$}. 
$$
\end{lemma}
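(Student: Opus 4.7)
The plan is to proceed by a weak compactness argument, essentially following the scheme of \cite[Lemma~3.1]{PART1} but upgrading each ingredient to the elasticity setting using the Korn-type inequalities proved above.

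\textbf{Step 1: Uniform a priori bound.} I test the variational formulation \eqref{variation_1} for $w_\sigma^{\A^N,A^N}$ against itself. On the left-hand side, the uniform ellipticity of $\cA^{\A^N,A^N} \in \mathcal{M}$ (which holds for every $N$ with the same constants $\alpha,\beta$ since $A^N \to A$ is in particular bounded) combined with Lemma~\ref{Korn4} yields
$$
a^{\A^N,A^N}\!\left(w_\sigma^{\A^N,A^N},w_\sigma^{\A^N,A^N}\right)
\;\geq\; \frac{\alpha}{2}\,\bigl\|\nabla w_\sigma^{\A^N,A^N}\bigr\|_{L^2(\R^d)^{d\times d}}^2 .
$$
On the right-hand side, the volume term $-\int_B e(v)\cdot \A^N\sigma$ is bounded using Cauchy--Schwarz by $C|\sigma|\,\|\nabla w_\sigma^{\A^N,A^N}\|_{L^2(B)^{d\times d}}$, and the surface term $\int_{\mathbb{S}} v\cdot((A^N\sigma)n)$ is bounded by the continuity of the trace from $H^1(B)^d$ to $L^2(\mathbb{S})^d$ together with the Poincar\'e inequality of Lemma~\ref{poincare2}. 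This gives a uniform bound
$$
\bigl\|\nabla w_\sigma^{\A^N,A^N}\bigr\|_{L^2(\R^d)^{d\times d}} \leq C(\alpha,\beta)\,|\sigma| ,
$$
and by Lemma~\ref{poincare2} also uniform bounds on $\|w_\sigma^{\A^N,A^N}\|_{H^1(B_R)^d}$ for every fixed $R\geq 1$.

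\textbf{Step 2: Extraction of weak limits.} Using a diagonal extraction on an exhausting sequence of balls $B_R$, I extract a subsequence (still denoted $N$) along which $w_\sigma^{\A^N,A^N} \rightharpoonup w^\star$ weakly in $H^1_{\rm loc}(\R^d)^d$ and strongly in $L^2_{\rm loc}(\R^d)^d$. The condition $\int_B w_\sigma^{\A^N,A^N}=0$ passes to the strong $L^2(B)^d$ limit, and lower semicontinuity of the $L^2$ norm of $\nabla w_\sigma^{\A^N,A^N}$ in $L^2(\R^d)^{d\times d}$ shows $w^\star\in V_0$. Since $\cA^{\A^N,A^N}(\sigma+e(w_\sigma^{\A^N,A^N}))$ is uniformly bounded in $L^2(\R^d)^{d\times d}$, I extract further a weak $L^2_{\rm loc}$ limit $\xi$.

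\textbf{Step 3: Identification of the flux.} This is the crux of the proof and I handle the inside and outside of $B$ separately via Theorem~\ref{jikov}. Inside, I take $D=D_1=B$: by assumption $\A^N$ G-converges to $\A^\star$ on $B$, each $w_\sigma^{\A^N,A^N}|_B$ solves $-{\rm div}(\A^N(\sigma+e(\cdot)))=0$ in $\mathcal{D}'(B)^d$, and it converges weakly in $H^1(B)^d$; Theorem~\ref{jikov} therefore identifies the weak $L^2(B)^{d\times d}$ limit of $\A^N(\sigma+e(w_\sigma^{\A^N,A^N}))$ as $\A^\star(\sigma+e(w^\star))$. Outside, for any $R>1$ I take $D=D_1=B_R\setminus\overline{B}$: since $A^N\to A$ in $\mathcal{M}$ (hence in $L^\infty$), the constant family $A^N$ trivially G-converges to $A$ on $D$, and Theorem~\ref{jikov} identifies the weak $L^2(B_R\setminus\overline{B})^{d\times d}$ limit as $A(\sigma+e(w^\star))$. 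Piecing these together, $\xi=\cA^{\A^\star,A}(\sigma+e(w^\star))$ almost everywhere in $\R^d$.

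\textbf{Step 4: Limit equation and full-sequence convergence.} Each $w_\sigma^{\A^N,A^N}$ satisfies \eqref{pro-1} in $\mathcal{D}'(\R^d)^d$. Testing against any $\varphi\in C_0^\infty(\R^d)^d$ and passing to the weak $L^2_{\rm loc}$ limit of the flux yields
$$
-{\rm div}\!\left[\cA^{\A^\star,A}\bigl(\sigma+e(w^\star)\bigr)\right]=0 \quad\text{in }\mathcal{D}'(\R^d)^d.
$$
Since $w^\star\in V_0$, Proposition~\ref{prop:existence_edp} forces $w^\star=w_\sigma^{\A^\star,A}$. As every weakly convergent subsequence has the same limit, the full sequence converges, as does the full sequence of fluxes.

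\textbf{Main obstacle.} The delicate step is Step~1: unlike the scalar diffusion case of \cite{PART1}, coercivity on $V_0$ is not immediate from ellipticity but requires Lemma~\ref{Korn4}, and controlling the boundary term in $b_\sigma^{\A^N,A^N}$ uniformly in $N$ forces me to combine trace continuity with the (non-classical) Poincar\'e inequality of Lemma~\ref{poincare2}. Once the uniform $H^1_{\rm loc}$ bound is in place, the homogenization machinery of Theorem~\ref{jikov} applied separately inside and outside $B$ delivers the identification of the flux without further difficulty.
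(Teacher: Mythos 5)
Your proof is correct and follows essentially the same route as the paper's: uniform a priori bound via testing against the solution itself (coercivity from Lemma~\ref{Korn4}, boundary term controlled by trace continuity and Lemma~\ref{poincare2}), diagonal extraction to a limit in $V_0$, identification of the flux via Theorem~\ref{jikov} applied separately inside $B$ and on exterior subdomains, and conclusion by uniqueness of the solution in $V_0$. The only cosmetic differences (using $B_R\setminus\overline{B}$ rather than a generic compact $K\subset\R^d\setminus B$, and phrasing coercivity in terms of $\nabla w$ instead of $e(w)$) do not affect the argument.
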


\begin{proof}
Using the variational formulation of~\eqref{pro-1}, and taking $w_\sigma^{\A^N,A^N}$ as a test function, we obtain
\begin{align*}
  \alpha \left\| e\left(w_\sigma^{\A^N,A^N}\right) \right\|^2_{L^2(\R^d)^{d\times d}}
  & \leq
  \int_{\R^d} e(w_\sigma^{\A^N,A^N}) \cdot \cA^{\A^N,A^N} e\left(w_\sigma^{\A^N,A^N}\right)
  \\
  & =
  - \int_B e\left(w_\sigma^{\A^N,A^N}\right) \cdot \A^N \sigma + \int_{\mathbb{S}} w_\sigma^{\A^N,A^N} \cdot ((A^N \sigma) \, n)
  \\
  & \leq
  \beta \left\| e\left(w_\sigma^{\A^N,A^N}\right) \right\|_{L^2(B)^{d\times d}} \left\| \sigma \right\|_{L^2(B)^{d\times d}} + \left\| A^N \sigma \right\|_{L^2(\mathbb{S})^{d\times d}} \left\| w_\sigma^{\A^N,A^N} \right\|_{L^2(\mathbb{S})^d}.
\end{align*}
Using the continuity of the trace application from $H^1(B)^d$ to $L^2(\mathbb{S})^d$ together with the Poincar\'e-Wirtinger inequality in $B$, there exists a constant $C>0$ such that $\left\| w_\sigma^{\A^R,A^R} \right\|_{L^2(\mathbb{S})^d} \leq C \left\| \nabla w_\sigma^{\A^N,A^N} \right\|_{L^2(B)^{d\times d}}$. Using additionally Lemma~\ref{simple}, we deduce from the above bound that
$$
\alpha \left\| e\left(w_\sigma^{\A^N,A^N}\right) \right\|^2_{L^2(\R^d)^{d\times d}}
\leq
\beta \left\| \nabla w_\sigma^{\A^N,A^N} \right\|_{L^2(B)^{d\times d}} \left\| \sigma \right\|_{L^2(B)^{d\times d}} + C \left\| A^N \sigma \right\|_{L^2(\mathbb{S})^{d\times d}} \left\| \nabla w_\sigma^{\A^N,A^N} \right\|_{L^2(B)^{d \times d}}.
$$
Using now Lemma~\ref{Korn4}, we deduce that the sequence $\left( \nabla w_\sigma^{\A^N,A^N} \right)_{N \in \N}$ is bounded in $L^2(\R^d)^{d\times d}$. 

\medskip

Let $R>1$. Using Lemma~\ref{poincare2}, there exists a constant $C(R)>0$ such that $\left\| w_\sigma^{\A^N,A^N} \right\|_{L^2(B_R)^d} \leq C(R) \left\| \nabla w_\sigma^{\A^N,A^N} \right\|_{L^2(\R^d)^{d\times d}}$. The sequence $\left( w_\sigma^{\A^N,A^N} \right)_{N \in \N}$ is thus bounded in $H^1(B_R)^d$, and hence there exists $w_{\sigma,R}^\infty \in H^1(B_R)^d$ such that 
$$
w_\sigma^{\A^N,A^N} \mathop{\rightharpoonup}_{N \to \infty} w_{\sigma,R}^\infty \quad \mbox{weakly in $H^1(B_R)^d$}. 
$$
The function $w_\sigma^\infty \in H_{\rm loc}^1(\R^d)^d$ defined by $w_\sigma^\infty(x) := w_{\sigma, R}^\infty(x)$ for $x \in B_R$ is well-defined since $w_{\sigma,R'}^\infty|_{B_R} = w_{\sigma,R}^\infty$ for all $R'>R$. The sequence $\left( w_\sigma^{\A^N,A^N} \right)_{N \in \N}$ thus weakly converges in $H_{\rm loc}^1(\R^d)^d$ to $w_\sigma^\infty$. Moreover, using the fact that $\left( \nabla w_\sigma^{\A^N,A^N} \right)_{N \in \N}$ is bounded in $L^2(\R^d)^{d\times d}$, we obtain that $\nabla w_\sigma^\infty \in L^2(\R^d)^{d\times d}$, and thus $w_\sigma^\infty \in V_0$. Applying Theorem~\ref{jikov} on the unit ball $B$, we obtain that 
$$
\A^N \left( \sigma+e\left(w_\sigma^{\A^N,A^N}\right) \right) \mathop{\rightharpoonup}_{N \to \infty} \A^\star \left( \sigma+e\left(w_\sigma^\infty\right) \right) \quad \mbox{weakly in $L^2(B)^{d\times d}$}. 
$$
Similarly, for any compact domain $K \subset \R^d \setminus B$, we have 
$$
A^N \left(\sigma+e\left(w_\sigma^{\A^N,A^N}\right) \right) \mathop{\rightharpoonup}_{N \to \infty} A \left(\sigma+e\left(w_\sigma^\infty\right) \right) \quad \mbox{weakly in $L^2(K)^{d\times d}$}. 
$$
This implies that the sequence $\left(\cA^{\A^N,A^N} \left(\sigma+e\left(w_\sigma^{\A^N,A^N}\right) \right) \right)_{N \in \N}$ weakly converges to $\cA^{\A^\star,A} \left(\sigma+e\left(w_\sigma^\infty\right) \right)$ in $L^2_{\rm loc}(\R^d)^{d\times d}$. Using~\eqref{pro-1}, we deduce that
$$
- {\rm div} \left( \cA^{\A^\star,A} \left(\sigma+e\left(w_\sigma^\infty\right) \right) \right) = 0 \quad \mbox{in $\mathcal{D}'(\R^d)^d$}.
$$
The above equation has a unique solution in $V_0$, and $w_\sigma^\infty \in V_0$. We thus obtain $w_\sigma^\infty = w_\sigma^{\A^\star,A}$, which concludes the proof of Lemma~\ref{converge-in-out}.
\end{proof}

We next turn to Lemma~\ref{same-tensor}, which is similar to~\cite[Lemma~3.2]{PART1}. 

\begin{lemma}
\label{same-tensor}
Let $A, A^\star\in \cM$. For all $\sigma \in \cS$, let $w_\sigma^{A^\star,A} \in V_0$ be the unique solution to 
\begin{equation}\label{corrector_bis}
- {\rm div} \left[ \cA^{A^\star,A} \left(\sigma+e\left(w_\sigma^{A^\star,A} \right)\right) \right] = 0 \quad \mbox{in $\mathcal{D}'(\R^d)^d$},
\end{equation}
where
$$
\cA^{A^\star,A}= 
\begin{cases}
A^\star & \mbox{if $x \in B$}, \\
A & \mbox{if $x\in \R^d \setminus B$}.
\end{cases}
$$
Then, $A^\star = A$ if and only if, for any $\sigma \in \cS$, $w_\sigma^{A^\star,A} =0$. 
\end{lemma}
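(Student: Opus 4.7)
The proof I propose consists of two implications, the forward one being essentially free from the uniqueness statement of Proposition~\ref{prop:existence}, and the converse one relying on a jump-condition computation across the sphere $\mathbb{S}$.

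For the forward implication, suppose $A^\star = A$. Then $\cA^{A^\star,A}$ is the constant tensor $A$ on all of $\R^d$, and the constant zero function trivially satisfies
$$
-{\rm div}\left[ \cA^{A^\star,A}(\sigma + e(0))\right] = -{\rm div}(A\sigma) = 0 \quad \mbox{in $\mathcal{D}'(\R^d)^d$},
$$
since $A\sigma \in \R^{d\times d}$ is constant. As $0 \in V_0$, the uniqueness statement in Proposition~\ref{prop:existence_edp} forces $w_\sigma^{A^\star,A}=0$.

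For the converse, assume $w_\sigma^{A^\star,A}=0$ for every $\sigma\in \cS$. Then~\eqref{corrector_bis} reduces to
$$
-{\rm div}\left(\cA^{A^\star,A}\sigma\right) = 0 \quad \mbox{in $\mathcal{D}'(\R^d)^d$}.
$$
The key observation is that $\cA^{A^\star,A}\sigma$ is piecewise constant: equal to $A^\star\sigma$ on $B$ and to $A\sigma$ on $\R^d\setminus B$. I will compute the left-hand side as a distribution by testing against an arbitrary $\varphi \in C^\infty_0(\R^d)^d$, splitting the integral on $B$ and $\R^d\setminus B$, and applying Green's formula separately on each side. Since $A^\star\sigma$ and $A\sigma$ are both constant, the volume contributions vanish, and the only surviving terms are boundary contributions on $\mathbb{S}$ with opposite outward normals, yielding
$$
\left\langle -{\rm div}(\cA^{A^\star,A}\sigma), \varphi \right\rangle_{\mathcal{D}'(\R^d)^d,\mathcal{D}(\R^d)^d} = \int_{\mathbb{S}} \left((A^\star - A)\sigma\, n\right)\cdot \varphi.
$$
The vanishing of this expression for every $\varphi$ forces $\left((A^\star - A)\sigma\right) n = 0$ for every $n\in\mathbb{S}$, and since $n$ ranges over the full unit sphere this gives $(A^\star - A)\sigma = 0$ in $\R^{d\times d}$.

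The last step is to conclude $A^\star = A$ from the fact that $(A^\star-A)\sigma = 0$ for every $\sigma \in \cS$. This is immediate: viewing $A^\star$ and $A$ as symmetric endomorphisms of $\cS$ (recall that $\cM \subset \widetilde{\cM}$), the tensor $A^\star - A$ annihilates all of $\cS$ and must therefore vanish. There is no real obstacle in this proof; the only step that requires some care is the jump-condition computation across $\mathbb{S}$, which is the elasticity analogue of~\cite[Lemma~3.2]{PART1} and does not use any of the elasticity-specific coercivity machinery developed above.
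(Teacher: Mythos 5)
Your proof is correct and follows essentially the same route as the paper: the forward direction is the same appeal to uniqueness, and your distributional jump computation across $\mathbb{S}$ is exactly the paper's step of testing $\int_{\R^d} e(v)\cdot \cA^{A^\star,A}\sigma = 0$ against arbitrary test functions and integrating by parts on each side of the sphere to get $\left((A^\star-A)\sigma\right) n = 0$ on $\mathbb{S}$, hence $(A^\star-A)\sigma=0$ and then $A^\star=A$.
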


\begin{proof}
If $A^\star = A$, then $w_\sigma^{A^\star,A} = 0$ is indeed the unique solution in $V_0$ to~\eqref{corrector_bis} for any $\sigma \in \cS$. 

Let us prove the converse and assume that, for any $\sigma \in \cS$, $w_\sigma^{A^\star,A} = 0$. Then, for all $v \in \mathcal{D}(\R^d)^d$, we have $\dps \int_{\R^d} e(v) \cdot \cA^{A^\star,A} \sigma = 0$. This implies that $\dps \int_{\mathbb{S}} \left(\left(A^\star \sigma\right) \, n \right) \cdot v = \int_{\mathbb{S}} \left(\left(A \sigma\right) \, n \right) \cdot v$. Since $v$ is arbitrary, we obtain $\left(A^\star\sigma\right) \, n = \left(A \sigma\right) \, n$ on $\mathbb{S}$, and hence $A^\star\sigma = A\sigma$ for all $\sigma\in\cS$ (since $A$ and $A^\star$ are constant). Since $A$ and $A^\star$ are symmetric tensors, we deduce $A^\star = A$.
\end{proof}

\begin{lemma}
\label{concave}
For all $\sigma\in \cS$ and $\A \in L^\infty(B,\mathcal{M})$, the mapping $\cM \ni A \mapsto \mathcal{E}_\sigma^{\A}(A)$ is concave (recall that $\mathcal{E}_\sigma^{\A}(A)$ is defined by~\eqref{eq:Energy}). In other words, for any $\alpha \in [0,1]$ and $A_0, A_1 \in \mathcal{M}$, we have
$$
\mathcal{E}_\sigma^{\A}(\alpha A_1+(1-\alpha)A_0) \geq \alpha \mathcal{E}_\sigma^{\A}(A_1) + (1-\alpha) \mathcal{E}_\sigma^{\A}(A_0). 
$$
In addition, we have the following property. Let $\alpha \in (0,1)$. The equality $\mathcal{E}_\sigma^{\A}(\alpha A_1+(1-\alpha)A_0) = \alpha \mathcal{E}_\sigma^{\A}(A_1) + (1-\alpha) \mathcal{E}_\sigma^{\A}(A_0)$ holds if and only if $w_\sigma^{\A,\alpha A_1+(1-\alpha)A_0} = w_\sigma^{\A,A_1} = w_\sigma^{\A,A_0}$.
\end{lemma}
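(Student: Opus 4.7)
The plan is to exploit the minimization characterization of $\mathcal{E}_\sigma^{\A}(A)$ in~\eqref{eq:Energy} combined with the observation that, for fixed $v \in V_0$, the map $A \mapsto \mathcal{E}_\sigma^{\A,A}(v)$ is \emph{affine} in $A$. Indeed, inspecting the definition~\eqref{eq:E}, the only $A$-dependent terms are $\dps \int_{\R^d \setminus B} e(v) \cdot A\, e(v)$ and $\dps -2\int_{\mathbb{S}} v \cdot ((A\sigma)\,n)$, both of which depend linearly on $A$. Hence, for any $\alpha \in [0,1]$ and any $v \in V_0$,
\begin{equation*}
\mathcal{E}_\sigma^{\A,\alpha A_1 + (1-\alpha) A_0}(v) = \alpha\, \mathcal{E}_\sigma^{\A,A_1}(v) + (1-\alpha)\, \mathcal{E}_\sigma^{\A,A_0}(v).
\end{equation*}

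Concavity then follows by the standard argument that an infimum of a family of affine functions is concave. More precisely, denote $A_\alpha := \alpha A_1 + (1-\alpha) A_0$ and let $w_\alpha := w_\sigma^{\A,A_\alpha}$ be the minimizer given by Proposition~\ref{prop:existence}. Using the affine identity above with $v = w_\alpha$, together with the fact that $\mathcal{E}_\sigma^{\A,A_j}(w_\alpha) \geq \inf_{v \in V_0} \mathcal{E}_\sigma^{\A,A_j}(v) = \mathcal{E}_\sigma^{\A}(A_j)$ for $j=0,1$, I obtain
\begin{equation*}
\mathcal{E}_\sigma^{\A}(A_\alpha) = \mathcal{E}_\sigma^{\A,A_\alpha}(w_\alpha) = \alpha\, \mathcal{E}_\sigma^{\A,A_1}(w_\alpha) + (1-\alpha)\, \mathcal{E}_\sigma^{\A,A_0}(w_\alpha) \geq \alpha\, \mathcal{E}_\sigma^{\A}(A_1) + (1-\alpha)\, \mathcal{E}_\sigma^{\A}(A_0),
\end{equation*}
which is the desired concavity inequality.

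For the equality case, assume $\alpha \in (0,1)$. Equality in the chain above forces $\mathcal{E}_\sigma^{\A,A_1}(w_\alpha) = \mathcal{E}_\sigma^{\A}(A_1)$ and $\mathcal{E}_\sigma^{\A,A_0}(w_\alpha) = \mathcal{E}_\sigma^{\A}(A_0)$ simultaneously (because both nonnegative defects are weighted by the strictly positive coefficients $\alpha$ and $1-\alpha$). By the \emph{uniqueness} part of Proposition~\ref{prop:existence}, the minimizer of $\mathcal{E}_\sigma^{\A,A_j}$ over $V_0$ is unique, so these two equalities are equivalent to $w_\alpha = w_\sigma^{\A,A_1}$ and $w_\alpha = w_\sigma^{\A,A_0}$. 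Conversely, if $w_\sigma^{\A,A_\alpha} = w_\sigma^{\A,A_1} = w_\sigma^{\A,A_0}$, the affine identity applied at $v = w_\alpha$ immediately gives equality in the concavity inequality. I do not expect a serious obstacle here: the only subtlety is to record carefully that the $\A$-dependent term $\int_B (\sigma + e(v))\cdot \A(\sigma+e(v))$ does not depend on $A$ at all, so it plays no role in the affine decomposition, and that uniqueness of the minimizer from Proposition~\ref{prop:existence} is what turns the equality of energies into equality of minimizers.
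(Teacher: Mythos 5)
Your proposal is correct and follows essentially the same route as the paper's proof: both rely on the affineness of $A \mapsto \mathcal{E}_\sigma^{\A,A}(v)$ for fixed $v$, evaluate at the minimizer $w_\sigma^{\A,A_\alpha}$ to get concavity as an infimum of affine maps, and use the uniqueness of the minimizer from Proposition~\ref{prop:existence} to settle the equality case. No gaps.
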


\begin{proof}
Denote by $A_\alpha = \alpha A_1 + (1-\alpha) A_0$ for all $\alpha\in[0,1]$. Since the mapping $\cM \ni A \mapsto \mathcal{E}_\sigma^{\A,A}(v)$ is affine for any fixed $v\in V_0$, we have
\begin{align*}
  \mathcal{E}_\sigma^{\A}(A_\alpha)
  & =
  \mathcal{E}_\sigma^{\A,A_\alpha} \left( w_\sigma^{\A,A_\alpha} \right)
  \\
  & =
  \alpha \mathcal{E}_\sigma^{\A,A_1} \left(w_\sigma^{\A,A_\alpha}\right) + (1-\alpha) \mathcal{E}_\sigma^{\A,A_0} \left(w_\sigma^{\A,A_\alpha}\right)
  \\ 
  & \geq
  \alpha \mathcal{E}_\sigma^{\A,A_1} \left(w_\sigma^{\A,A_1}\right) + (1-\alpha) \mathcal{E}_\sigma^{\A,A_0} \left(w_\sigma^{\A,A_0}\right)
  \\
  &= 
  \alpha \mathcal{E}_\sigma^{\A}(A_1) + (1-\alpha) \mathcal{E}_\sigma^{\A}(A_0).
\end{align*}
We have thus established the concavity of the mapping $\cM \ni A \mapsto \mathcal{E}_\sigma^{\A}(A)$.

Assume now that $A_0$ and $A_1$ are such that $\mathcal{E}_\sigma^{\A}(A_\alpha) = \alpha \mathcal{E}_\sigma^{\A}(A_1) + (1-\alpha) \mathcal{E}_\sigma^{\A}(A_0)$ for some $\alpha\in[0,1]$. All the inequalities above should then be equalities, and we thus deduce that 
$$
\mathcal{E}_\sigma^{\A,A_1} \left(w_\sigma^{\A,A_\alpha} \right) = \mathcal{E}_\sigma^{\A,A_1} \left(w_\sigma^{\A,A_1} \right) \quad \mbox{and} \quad \mathcal{E}_\sigma^{\A,A_0} \left(w_\sigma^{\A,A_\alpha} \right) = \mathcal{E}_\sigma^{\A,A_0} \left(w_\sigma^{\A,A_0} \right).
$$
Since $w_\sigma^{\A,A_1}$ (resp. $w_\sigma^{\A,A_0}$) is the unique minimizer of $\mathcal{E}_\sigma^{A,A_1}$ (resp. $\mathcal{E}_\sigma^{A,A_0}$), we deduce that $w_\sigma^{\A,A_1} = w_\sigma^{\A,A_\alpha} = w_\sigma^{\A,A_0}$. The converse statement obviously holds true.
\end{proof}


\section{Alternative approximations of the homogenized tensor} \label{sec:Alternative}

In this section, we consider $\left(\A^N\right)_{N \in \N} \subset L^\infty(B,\cM)$ a family of tensor-valued functions which is assumed to converge in $B$ in the sense of homogenization to a {\em constant} tensor $A^\star\in \cM$. As mentioned in the introduction, stochastic homogenization is a prototypical example of context where this situation occurs and where computing effective approximations of the homogenized tensor $A^\star$ is of practical importance. 

The aim of this section is to propose four alternative definitions of effective tensors, which all make use of embedded corrector problems of the form~\eqref{p-corrector}, and to prove their convergence to the homogenized tensor $A^\star$ when $N$ goes to infinity. These approximations are inspired by our earlier work~\cite{PART1} on a simple scalar diffusion problem.

We recall (see beginning of Section~\ref{sec:pre}) that $\widetilde{\cM}$ is the set of symmetric fourth-order tensors, and that $\cM \subset \widetilde{\cM}$ is the set of tensors additionally satisfying the ellipticity and boundedness assumptions~\eqref{eq:ellipticity}.

\medskip

\noindent
{\bf Approximation 1:} Let $A_1^N \in \cM$ be a tensor satisfying  
\begin{equation}\label{tensor1}
A_1^N \in \underset{A \in \mathcal{M}}{\mathop{\rm argmax}} \ \mathcal{E}^{\A^N}(A), 
\end{equation}
where $\cM \ni A \mapsto \mathcal{E}^{\A^N}(A)$ is defined by~\eqref{E-sum}. Since this mapping is concave (in view of Lemma~\ref{concave}), the existence of a tensor $A_1^N$ in $\cM$ satisfying~\eqref{tensor1} is obvious.

\medskip

\noindent
{\bf Approximation 2:} Our second approximation $A_2^N\in \widetilde{\cM}$ is defined by
\begin{equation}\label{tensor2}
\forall \sigma \in \cS, \qquad A_2^N \sigma = \frac{1}{|B|} \int_B \A^N \left(\sigma+ e\left(w_\sigma^{\A^N ,A_1^N}\right)\right).
\end{equation}

\medskip

\noindent
{\bf Approximation 3:} Observing that $\mathcal{E}_\sigma^{\A^N}(A^N_1)$ quadratically depends on $\sigma$, there exists a unique $A^N_3 \in \widetilde{\cM}$ such that 
\begin{equation}\label{tensor3}
\forall \sigma \in \cS, \qquad \sigma \cdot A^N_3 \sigma = \mathcal{E}_\sigma^{\A^N}(A^N_1).
\end{equation}

\medskip

\noindent
{\bf Approximation 4:} Finally, we consider a self-consistent formulation to define our fourth approximation. Let us assume that, for all $N \in \N$, there exists $A^N_4 \in \mathcal{M}$ satisfying
\begin{equation}\label{tensor4}
\forall \sigma \in \cS, \qquad \sigma \cdot A_4^N \sigma = \mathcal{E}_\sigma^{\A^N}(A_4^N).
\end{equation}
We then have the following proposition, which is a direct extension of~\cite[Propositions~3.4 and~3.5]{PART1} to the elasticity case. 
 
\begin{proposition}\label{prop:convergence}
Let $(\A^N)_{N \in \N} \subset L^\infty(B,\mathcal{M})$ a family of tensors which converges in $B$ in the sense of homogenization to a constant tensor $A^\star\in \cM$. Let $A_1^N$, $A_2^N$ and $A_3^N$ be respectively defined by~\eqref{tensor1}, \eqref{tensor2} and~\eqref{tensor3}. Then, we have 
$$
A_1^N \mathop{\longrightarrow}_{N\to +\infty} A^\star, \qquad A_2^N \mathop{\longrightarrow}_{N\to +\infty} A^\star \qquad \mbox{and} \qquad A_3^N \mathop{\longrightarrow}_{N\to +\infty} A^\star.  
$$
In addition, let us assume that there exists a family $(A_4^N)_{N \in \N} \subset \cM$ satisfying~\eqref{tensor4}. Then, 
$$
A_4^N \mathop{\longrightarrow}_{N\to +\infty} A^\star.
$$
\end{proposition}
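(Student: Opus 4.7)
The overall strategy is to handle the four approximations in sequence, each time exploiting the compactness of $\cM$ (a closed bounded subset of the finite-dimensional space of fourth-order tensors) to extract a convergent subsequence of $(A_j^N)_N$, pass to the limit in its defining relation using Lemma~\ref{converge-in-out} (which controls both the weak $H^1_{\rm loc}$ limit of the embedded correctors and the weak $L^2_{\rm loc}$ limit of their fluxes), and identify the limit as $A^\star$. The standard subsequence argument then upgrades each to convergence of the whole sequence.

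For $A_1^N$, I would extract $A_1^{N'} \to \tilde A \in \cM$. Applying Lemma~\ref{converge-in-out} both with the sequence $A_1^N \to \tilde A$ and, separately, with the constant sequence $A^N \equiv A$ for an arbitrary $A \in \cM$, together with continuity of the trace from $H^1(B)^d$ to $L^2(\mathbb{S})^d$ to handle the boundary term, the formulas~\eqref{elseE_1}--\eqref{elseE_2} pass to the limit as $\cE^{\A^N}(A_1^N) \to \cE^{A^\star}(\tilde A)$ and $\cE^{\A^N}(A) \to \cE^{A^\star}(A)$. The maximality~\eqref{tensor1} then passes to the limit, so $\tilde A$ maximizes $A \mapsto \cE^{A^\star}(A)$ over $\cM$. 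The key identification is that $A^\star$ is the unique maximizer: Lemma~\ref{same-tensor} gives $w_\sigma^{A^\star,A^\star}=0$, so by~\eqref{elseE_1} we have $\cE^{A^\star}(A^\star) = \sum_{i\le j}\sigma^{ij}\cdot A^\star\sigma^{ij}$, while the coercivity in~\eqref{eq:ellipticity} applied in the same formula shows $\cE^{A^\star}(A)\le \sum_{i\le j}\sigma^{ij}\cdot A^\star\sigma^{ij}$ for any $A \in \cM$, with equality forcing $e(w_{\sigma^{ij}}^{A^\star,A})\equiv 0$ on $\R^d$; Lemmas~\ref{lem:use} and~\ref{same-tensor} then chain together to give $A = A^\star$, so $\tilde A = A^\star$.

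The cases of $A_2^N$ and $A_3^N$ follow quickly from the $A_1^N$ analysis. For $A_2^N$, Lemma~\ref{converge-in-out} applied to the outer tensor $A_1^N \to A^\star$ (so that the limit corrector $w_\sigma^{A^\star,A^\star}$ vanishes) yields weak convergence of $\A^N(\sigma + e(w_\sigma^{\A^N,A_1^N}))$ to $A^\star\sigma$ in $L^2(B)^{d\times d}$, and averaging over $B$ gives $A_2^N \sigma \to A^\star\sigma$ for every $\sigma \in \cS$. For $A_3^N$, the convergence $\cE_\sigma^{\A^N}(A_1^N)\to \sigma\cdot A^\star\sigma$ already obtained and polarization in $\widetilde\cM$ directly yield $A_3^N \to A^\star$.

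The most delicate case is $A_4^N$. Extracting $A_4^{N'} \to \bar A\in \cM$ and passing to the limit in~\eqref{tensor4} via Lemma~\ref{converge-in-out} gives $\sigma\cdot\bar A\sigma = \cE_\sigma^{A^\star}(\bar A)$ for every $\sigma\in\cS$, which by~\eqref{elseE_1} rewrites as
$$
\sigma\cdot(A^\star-\bar A)\sigma = \frac{1}{|B|}\left[\int_B e(w)\cdot A^\star e(w) + \int_{\R^d\setminus B}e(w)\cdot \bar A e(w)\right], \quad w = w_\sigma^{A^\star,\bar A},
$$
so $A^\star-\bar A$ is positive semidefinite on $\cS$. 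The remaining task, and where I expect the main obstacle, is to upgrade this to $\bar A = A^\star$: my plan is to combine the identity above with a complementary identity obtained by integrating by parts in~\eqref{elseE_2} (using that $(A^\star-\bar A)\sigma$ is a constant symmetric matrix, hence divergence-free inside $B$), which yields $\sigma\cdot(A^\star-\bar A)\sigma = -\frac{1}{|B|}\int_B e(w)\cdot(A^\star-\bar A)\sigma$; pitting a Cauchy--Schwarz on this equality against the coercivity lower bound from the first identity and the Korn control $\|\nabla w\|_{L^2(\R^d)}\le\sqrt{2}\,\|e(w)\|_{L^2(\R^d)}$ of Lemma~\ref{Korn4} should force $e(w)\equiv 0$ on $\R^d$, whence $w \equiv 0$ by Lemma~\ref{lem:use} and finally $\bar A = A^\star$ by Lemma~\ref{same-tensor}. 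This mirrors the concluding step of~\cite[Prop.~3.5]{PART1} in the scalar case, but the elasticity setting requires the Korn inequality in $V_0$ to pass from $e(w)$-control back to $\nabla w$-control, which is precisely the sort of technicality that was absent in the diffusive analogue.
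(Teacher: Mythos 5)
Your treatment of Approximations 1--3 is essentially the paper's own argument (the paper compares $\mathcal{E}^{\A^N}(A_1^N)$ only with $\mathcal{E}^{\A^N}(A^\star)$ rather than first establishing that $\tilde A$ maximizes $\mathcal{E}^{A^\star}(\cdot)$ over all of $\cM$ and that the maximizer is unique, but the underlying computation is identical), and your Approximation~4 matches the paper up to and including the semidefiniteness of $A^\star-\bar A$; your complementary identity $\sigma\cdot(A^\star-\bar A)\sigma=-\frac{1}{|B|}\int_B e(w)\cdot(A^\star-\bar A)\sigma$ is also correct.

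The gap is in your final step for Approximation~4. A Euclidean Cauchy--Schwarz ``pitted against the coercivity lower bound'' does not close: writing $Q=\sigma\cdot(A^\star-\bar A)\sigma$ and $E=\|e(w)\|_{L^2(B)^{d\times d}}$, your two identities give $\alpha\,\|e(w)\|_{L^2(\R^d)^{d\times d}}^2\le|B|\,Q\le |(A^\star-\bar A)\sigma|\,|B|^{1/2}E$, which only bounds $E$ and does not force $E=0$; moreover Lemma~\ref{Korn4} plays no role at this point, since once $e(w)=0$ on $\R^d$ Lemma~\ref{lem:use} gives $w=0$ directly for $w\in V_0$. The step can be repaired by using instead the Cauchy--Schwarz inequality associated with the positive semidefinite form $D:=A^\star-\bar A$ on $\cS$, namely $|e(w)\cdot D\sigma|\le(e(w)\cdot De(w))^{1/2}(\sigma\cdot D\sigma)^{1/2}$: this upgrades the complementary identity to $|B|\,Q\le\int_B e(w)\cdot De(w)=\int_B e(w)\cdot A^\star e(w)-\int_B e(w)\cdot\bar A e(w)$, and subtracting your first identity yields $\int_{\R^d}e(w)\cdot\bar A e(w)\le0$, hence $e(w)=0$ by ellipticity. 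The paper avoids this computation altogether with a variational sandwich: since $A^\star\ge\bar A$ on $\cS$, one has $\sigma\cdot\bar A\sigma=\mathcal{E}_\sigma^{A^\star,\bar A}(w)\ge\mathcal{E}_\sigma^{\bar A,\bar A}(w)\ge\mathcal{E}_\sigma^{\bar A,\bar A}\bigl(w_\sigma^{\bar A,\bar A}\bigr)=\sigma\cdot\bar A\sigma$, so all inequalities are equalities and $w$ coincides with the unique minimizer $w_\sigma^{\bar A,\bar A}=0$ of $\mathcal{E}_\sigma^{\bar A,\bar A}$; Lemma~\ref{same-tensor} then concludes. Either fix is acceptable, but as stated your plan does not yield $e(w)\equiv0$.
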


\begin{proof}
We sucessively study each of our four approximations.

\medskip

\noindent
{\bf Approximation 1:} For all $N \in \N$, we have $A_1^N \in \mathcal{M}$ and $\left( A_1^N \right)_{N \in \N}$ is thus bounded. There thus exists $A_1^\infty \in \cM$ such that (up to the extraction of a subsequence) $\dps A_1^N \mathop{\longrightarrow}_{N\to +\infty} A_1^\infty$. Let us prove that $A_1^\infty = A^\star$.

For all $1\leq i\leq j \leq d$, consider the unique solutions $w_{\sigma^{ij}}^{\A^N,A^\star}$ and $w_{\sigma^{ij}}^{A^\star,A^\star}$ in $V_0$ to the problems
\begin{gather*}
  - {\rm div} \left[ \cA^{\A^N,A^\star} \left(\sigma^{ij}+e\left(w_{\sigma^{ij}}^{\A^N,A^\star}\right)\right) \right] = 0 \quad \text{in $\mathcal{D}'(\R^d)^d$},
  \\
  - {\rm div} \left[ \cA^{A^\star,A^\star} \left(\sigma^{ij}+e\left(w_{\sigma^{ij}}^{A^\star,A^\star}\right)\right) \right] = 0 \quad \text{in $\mathcal{D}'(\R^d)^d$}. 
\end{gather*}
Using Lemma~\ref{same-tensor}, it holds that $w_{\sigma^{ij}}^{A^\star,A^\star}=0$. Lemma~\ref{converge-in-out} implies that
\begin{equation}\label{eq:wcv1}
w_{\sigma^{ij}}^{\A^N,A^\star} \xrightharpoonup[N\to \infty]{} w_{\sigma^{ij}}^{A^\star,A^\star} = 0 \qquad \text{weakly in $H^1_{\rm loc}(\R^d)^d$},
\end{equation}
and
\begin{equation}\label{eq:wcv2}
 \cA^{\A^N,A^\star} \left(\sigma^{ij} + e\left(w_{\sigma^{ij}}^{\A^N,A^\star}\right)\right) \xrightharpoonup[N\to \infty]{} A^\star \left(\sigma^{ij} + e\left(w_{\sigma_{ij}}^{A^\star,A^\star}\right)\right) = A^\star \sigma^{ij} \qquad \text{weakly in $L^2_{\rm loc}(\R^d)^{d\times d}$}. 
\end{equation}
Since $A_1^N$ is a maximizer of~\eqref{tensor1} and since $A^\star \in \cM$, we have 
\begin{equation} \label{eq:covid3}
\sum\limits_{1\leq i\leq j\leq d} \mathcal{E}_{\sigma^{ij}}^{\A^N}(A_1^N) \geq \sum\limits_{1\leq i\leq j \leq d} \mathcal{E}_{\sigma^{ij}}^{\A^N}(A^\star).
\end{equation}
We are going to pass to the limit $N \to \infty$ in the above estimate. Using~\eqref{elseE_2}, we write
\begin{equation} \label{eq:covid2}
  |B| \, \mathcal{E}_{\sigma^{ij}}^{\A^N}(A^\star)
  =
  \int_B \sigma^{ij} \cdot \A^N \left(\sigma^{ij}+e\left(w^{\A^N,A^\star}_{\sigma^{ij}}\right) \right) - \int_{\mathbb{S}} w^{\A^N,A^\star}_{\sigma^{ij}} \cdot \left(\left(A^\star\sigma^{ij}\right) \, n\right).
\end{equation}
To pass to the limit in the first term of the right-hand side of~\eqref{eq:covid2}, we use~\eqref{eq:wcv2}, which yields
$$
\int_B \sigma^{ij} \cdot \A^N \left(\sigma^{ij}+e\left(w^{\A^N,A^\star}_{\sigma^{ij}}\right) \right) \mathop{\longrightarrow}_{N\to +\infty} \int_B \sigma^{ij} \cdot A^\star \sigma^{ij}. 
$$ 
Since the trace operator is compact from $H^1(B)$ to $L^2(\mathbb{S})$, we deduce from~\eqref{eq:wcv1} that $\dps w^{\A^N,A^\star}_{\sigma^{ij}}\mathop{\longrightarrow}_{N\to +\infty} 0$ strongly in $L^2(\mathbb{S})^d$, which implies that
$$
\int_{\mathbb{S}} w^{\A^N,A^\star}_{\sigma^{ij}} \cdot \left(\left(A^\star\sigma^{ij}\right) \, n\right) \mathop{\longrightarrow}_{N\to +\infty} 0.
$$
We thus deduce that $\dps \lim_{N \to \infty} \mathcal{E}_{\sigma^{ij}}^{\A^N}(A^\star) = \sigma^{ij} \cdot A^\star \sigma^{ij}$.

Similar arguments yield that, for all $1\leq i \leq j \leq d$,
\begin{align*}
  |B| \, \mathcal{E}_{\sigma^{ij}}^{\A^N}(A_1^N)
  &=
  \int_B \sigma^{ij} \cdot \A^N \left(\sigma^{ij}+e\left(w^{\A^N,A_1^N}_{\sigma^{ij}}\right) \right) - \int_{\mathbb{S}} w^{\A^N,A_1^N}_{\sigma^{ij}}\cdot \left(\left(A_1^N\sigma^{ij}\right) \, n \right)
  \\
  &\mathop{\longrightarrow}_{N\to +\infty}
  \int_B \sigma^{ij} \cdot A^\star \left(\sigma^{ij}+e\left(w^{A^\star,A_1^\infty}_{\sigma^{ij}}\right) \right) - \int_{\mathbb{S}} w^{A^\star,A_1^\infty}_{\sigma^{ij}} \cdot \left(\left(A_1^\infty\sigma^{ij}\right) \, n \right)
  \\
  &=
  |B| \, \mathcal{E}_{\sigma^{ij}}^{A^\star}(A_1^\infty)
  \\
  &=
  \int_B \sigma^{ij} \cdot A^\star \sigma^{ij} - \int_B e\left(w^{A^\star,A_1^\infty}_{\sigma^{ij}}\right) \cdot A^\star e\left(w^{A^\star,A_1^\infty}_{\sigma^{ij}}\right) - \int_{\R^d \setminus B} e\left(w^{A^\star,A_1^\infty}_{\sigma^{ij}}\right) \cdot A_1^\infty e\left(w^{A^\star,A_1^\infty}_{\sigma^{ij}}\right),
\end{align*}
where we have used~\eqref{elseE_1} in the last line.

Passing to the limit $N \to \infty$ in~\eqref{eq:covid3}, we thus deduce that
$$
-\sum_{1\leq i\leq j\leq d} \left( \int_B e\left(w^{A^\star,A_1^\infty}_{\sigma^{ij}}\right) \cdot A^\star e\left(w^{A^\star,A_1^\infty}_{\sigma^{ij}}\right) + \int_{\R^d \setminus B} e\left(w^{A^\star,A_1^\infty}_{\sigma^{ij}}\right) \cdot A_1^\infty e\left(w^{A^\star,A_1^\infty}_{\sigma^{ij}}\right)\right) \geq 0.
$$
Since $A^\star$ and $A_1^\infty$ belong to $\mathcal{M}$ and are thus bounded away from 0, we obtain that $e\left(w^{A^\star,A_1^\infty}_{\sigma^{ij}} \right) = 0$ on $\R^d$ for all $1 \leq i,j \leq d$. Using Lemma~\ref{lem:use} and the fact that $w_{\sigma^{ij}}^{A^\star,A_1^\infty} \in V_0$, this implies that $w_{\sigma^{ij}}^{A^\star,A_1^\infty} = 0$. Since $\left( \sigma^{ij}\right)_{1\leq i \leq j \leq d}$ forms a basis of $\cS$ and since the mapping $\cS \ni \sigma \mapsto w_\sigma^{A^\star,A_1^\infty}$ is linear, we obtain that $w_\sigma^{A^\star,A_1^\infty} = 0$ for all $\sigma \in \cS$. We conclude that $A_1^\infty = A^\star$ using Lemma~\ref{same-tensor}.

\medskip

\noindent
{\bf Approximation 2:} In view of the definition~\eqref{tensor2} of $A_2^N$ and using Lemma~\ref{converge-in-out}, we observe that, for all $\sigma \in \cS$,
$$
A_2^N \sigma = \frac{1}{|B|} \int_B \A^N \left(\sigma + e\left(w_\sigma^{\A^N,A_1^N} \right)\right) \mathop{\longrightarrow}_{N\to +\infty} \frac{1}{|B|} \int_B A^\star \left(\sigma + e\left(w_\sigma^{A^\star,A_1^\infty} \right)\right).
$$
Using next that $A_1^\infty = A^\star$ and hence that $w_\sigma^{A^\star,A_1^\infty} = 0$, we deduce that $\dps A_2^N \sigma$ converges to $A^\star\sigma$ for all $\sigma \in \cS$, and thus that $\dps A_2^N \mathop{\longrightarrow}_{N\to +\infty} A^\star$.

\medskip

\noindent
{\bf Approximation 3:} For all $1\leq i\leq j \leq d$, using~\eqref{elseE_2} and the same arguments as above, we have
\begin{align*}
  \sigma^{ij} \cdot A_3^N \sigma^{ij}
  &=
  \mathcal{E}_{\sigma^{ij}}^{\A^N}(A_1^N)
  \\
  & =
  \frac{1}{|B|} \left[ \int_B \sigma^{ij} \cdot \A^N \left(\sigma^{ij}+e\left(w^{\A^N,A_1^N}_{\sigma^{ij}}\right) \right) - \int_{\mathbb{S}} w^{\A^N,A_1^N}_{\sigma^{ij}} \cdot \left(\left(A_1^N\sigma^{ij}\right) \, n \right) \right]
  \\
  & \mathop{\longrightarrow}_{N\to +\infty}
  \frac{1}{|B|} \int_B \sigma^{ij} \cdot A^\star \sigma^{ij} = \sigma^{ij}\cdot A^\star \sigma^{ij}.
\end{align*}
We thus infer that $\dps A_3^N \mathop{\longrightarrow}_{N\to +\infty} A^\star$.

\medskip

\noindent
{\bf Approximation 4:} Since $A_4^N \in \mathcal{M}$ for all $N$, there exists $A^\infty_4 \in \cM$ such that (up to the extraction of a subsequence) $\dps A_4^N \mathop{\longrightarrow}_{N\to +\infty} A_4^\infty$. In view of~\eqref{tensor4} and~\eqref{elseE_2}, we have, for any $\sigma \in \cS$,
$$
\sigma \cdot A_4^N \sigma = \mathcal{E}_\sigma^{\A^N}(A_4^N) = \frac{1}{|B|} \int_B \sigma \cdot \A^N \left(\sigma+e\left(w_\sigma^{\A^N,A_4^N}\right)\right) - \frac{1}{|B|} \int_{\mathbb{S}} w_\sigma^{\A^N,A_4^N} \cdot \left(\left(A_4^N \sigma\right) \, n\right).
$$
Taking the limit $N \to +\infty$ and using Lemma~\ref{converge-in-out} yields that, for all $\sigma \in \cS$,
\begin{equation} \label{eq:covid4}
\sigma \cdot A_4^\infty \sigma = \frac{1}{|B|} \int_B \sigma \cdot A^\star \left(\sigma+e\left(w_\sigma^{A^\star,A_4^\infty}\right)\right) - \frac{1}{|B|} \int_{\mathbb{S}} w_\sigma^{A^\star,A_4^\infty} \cdot \left(\left(A_4^\infty \sigma\right) \, n \right) = \mathcal{E}_\sigma^{A^\star}(A_4^\infty),
\end{equation}
where we have used~\eqref{elseE_2} for the last equality and where $w_\sigma^{A^\star,A_4^\infty}$ is the unique solution in $V_0$ to 
$$
-{\rm div} \left[ \cA^{A^\star,A_4^\infty}\left(\sigma+e\left(w_\sigma^{A^\star,A_4^\infty}\right) \right) \right] = 0 \quad \text{in $\mathcal{D}'(\R^d)^d$}. 
$$
Using~\eqref{elseE_1}, we deduce from~\eqref{eq:covid4} that
$$
  \frac{1}{|B|} \int_B \sigma \cdot (A^\star - A_4^\infty) \sigma
  =
  \frac{1}{|B|} \int_B e\left(w_\sigma^{A^\star,A_4^\infty} \right) \cdot A^\star e\left(w_\sigma^{A^\star,A_4^\infty} \right) + \frac{1}{|B|} \int_{\R^d \setminus B} e\left(w_\sigma^{A^\star,A_4^\infty} \right) \cdot A_4^\infty e\left(w_\sigma^{A^\star,A_4^\infty} \right) \geq 0,
$$
which implies that $\sigma \cdot A^\star \sigma \geq \sigma \cdot A_4^\infty \sigma$ for all $\sigma \in \cS$. 

\medskip

We next infer from~\eqref{eq:covid4} that
\begin{equation}\label{smaller0}
\sigma \cdot A_4^\infty \sigma = \mathcal{E}_\sigma^{A^\star,A_4^\infty}\left(w_\sigma^{A^\star,A_4^\infty}\right) \geq \mathcal{E}_\sigma^{A_4^\infty,A_4^\infty}\left(w_\sigma^{A^\star,A_4^\infty}\right) \geq \mathcal{E}_\sigma^{A_4^\infty,A_4^\infty}\left(w_\sigma^{A^\infty_4,A_4^\infty}\right),
\end{equation}
where we have successively used that $A^\star \geq A_4^\infty$ and the fact that $w_\sigma^{A_4^\infty,A_4^\infty}$ is the minimizer of $\mathcal{E}_\sigma^{A_4^\infty,A_4^\infty}$ in $V_0$. Using Lemma~\ref{same-tensor}, we of course have $w_\sigma^{A^\infty_4,A_4^\infty} = 0$, and hence $\mathcal{E}_\sigma^{A_4^\infty,A_4^\infty}\left(w_\sigma^{A^\infty_4,A_4^\infty}\right) = \sigma \cdot A_4^\infty \sigma$. We thus obtain that, in~\eqref{smaller0}, all inequalities are actually equalities, and hence that $\dps \mathcal{E}_\sigma^{A_4^\infty,A_4^\infty}\left(w_\sigma^{A^\star,A_4^\infty}\right) = \sigma \cdot A_4^\infty \sigma$. Since $\sigma \cdot A_4^\infty \sigma$ is the minimum of $\dps \mathcal{E}_\sigma^{A_4^\infty,A_4^\infty}$ in $V_0$, which is uniquely attained at $w_\sigma^{A^\infty_4,A_4^\infty}$, this implies that $w_\sigma^{A^\star,A_4^\infty} = w_\sigma^{A^\infty_4,A_4^\infty} = 0$. Using again Lemma~\ref{same-tensor}, we deduce from the fact that $w_\sigma^{A^\star,A_4^\infty} = 0$ for all $\sigma \in \cS$ that $A_4^\infty = A^\star$. This concludes the proof of Proposition~\ref{prop:convergence}.
\end{proof}


\section{Case of isotropic materials} \label{sec:iso}

In the general case, we are not able to prove that there exists some tensor $A_4^N$ satisfying the self-consistent formula~\eqref{tensor4}. In the case of a scalar diffusion equation, we had the same difficulty (see~\cite[Section~3.6]{PART1}). However, in the case when the homogenized material is isotropic, we were able to show a weaker existence result (by considering a formula which is self-consistent in a weaker sense) and to prove that this self-consistent approximation converges to the homogenized coefficient in the limit $N \to \infty$ (see~\cite[Proposition~3.7]{PART1}). Our aim in Section~\ref{sec:pro} is to follow the same path, that is to consider isotropic elastic materials and to replace the self-consistent formula~\eqref{tensor4} by~\eqref{eq:mu} below. To do so, we need some preliminary results on isotropic materials, which are collected in this section and which will be useful to prove (using arguments specific to the elasticity case) our results in Section~\ref{sec:pro}. From now on, we assume that $d = 3$. Our main result in this section is Lemma~\ref{Elsheby}. It implies the relations~\eqref{energy-isotropic_1} and~\eqref{energy-isotropic_3}, which are the only results of this Section~\ref{sec:iso} that we use in Section~\ref{sec:pro}.

\subsection {Homogenization in isotropic elasticity}

We consider the specific situation where the material is isotropic. A symmetric fourth-order tensor $A \in \widetilde{\cM}$ is said to be an isotropic tensor if there exists real parameters $\lambda$ and $\mu$ (called the Lam\'e coefficients of $A$) such that 
\begin{equation}\label{restriction-lame}
\mu>0, \qquad 2\mu+3\lambda>0,
\end{equation}
and
$$
\forall \sigma \in \cS, \quad A \sigma = 2 \mu \, \sigma + \lambda \, {\rm Tr} (\sigma) \, {\rm Id},
$$
where ${\rm Id}$ is the $3 \times 3$ identity matrix. In the following, for any $\lambda,\mu \in \R$ satisfying~\eqref{restriction-lame}, we denote by $A^{\lambda,\mu}_{\rm iso} \in \widetilde{\mathcal{M}}$ the unique fourth-order tensor such that 
\begin{equation}\label{eq:def_A-is-lambda-mu}
\forall \sigma \in \cS, \quad A^{\lambda,\mu}_{\rm iso} \sigma = 2 \mu \, \sigma + \lambda \, {\rm Tr} (\sigma) \, {\rm Id}.
\end{equation}
The set of isotropic tensors is denoted by $\widetilde{\mathcal{I}}\subset \widetilde{\cM}$. We recall that a tensor $A \in \widetilde{\cM}$ is isotropic (i.e. belongs to $\widetilde{\mathcal{I}}$) if and only if, for any orthogonal matrix $U \in \R^{3\times 3}$, it holds that
$$
\forall \sigma \in \mathcal{S}, \quad (U^{-1} \sigma U) \cdot A (U^{-1} \sigma U) = \sigma \cdot A \sigma. 
$$
We denote by $\mathcal{I} := \mathcal{M} \cap \widetilde{\mathcal{I}}$, where we recall that $\mathcal{M}$ is the set of symmetric fourth-order tensors satisfying the ellipticity and boundedness assumptions~\eqref{eq:ellipticity}. It can be easily checked that 
\begin{equation} \label{eq:borne_I}
\mathcal{I} = \left\{ A^{\lambda,\mu}_{\rm iso}, \quad \alpha \leq 2\mu \leq \beta, \quad \alpha \leq 2\mu+ 3\lambda \leq \beta \right\}. 
\end{equation}
%
%
We recall the following definition of a {\em stochastically isotropic} stationary random field~\cite[Section~12.3]{JKO}.

\begin{definition}\label{def:isorand}
A stationary, scalar-valued random field $a \in L^1_{\rm loc}(\R^d,L^1(\Omega))$ is called stochastically isotropic if, for any orthogonal matrix $U \in \R^{d\times d}$, there exists a mapping $\theta_U: \Omega \to \Omega$ such that $\theta_U$ preserves the invariant measure on $\Omega$ and such that $a(Uy, \omega) = a(y,\theta_U(\omega))$ a.s. and for almost all $y\in \R^d$.
\end{definition}

Let us recall the following theorem~\cite[Theorem~12.5]{JKO}:

\begin{theorem}
Let $\mu$ and $\lambda$ in $L^\infty(\R^3,L^1(\Omega))$ be two stationary and stochastically isotropic random fields in the sense of Definitions~\ref{De:stationary} and~\ref{def:isorand}. We furthermore assume that $\mu$ and $\lambda$ are bounded from above and below such that $A_{\rm iso}^{\lambda(x,\omega),\mu(x,\omega)} \in \mathcal{I}$ for almost all $x\in \R^3$ and almost surely.
Let $\A \in L^\infty\big(\R^3, L^1\left(\Omega,\mathcal{I}\right)\big)$ be defined by $\A(x,\omega) = A_{\rm iso}^{\lambda(x,\omega),\mu(x,\omega)}$. Stated otherwise, we define $\A(x,\omega)$ by
$$
\forall \sigma \in \mathcal{S}, \qquad \A(x,\omega)\sigma := 2 \mu(x,\omega) \, \sigma + \lambda(x,\omega) \, {\rm Tr} (\sigma) \, {\rm Id}.
$$
Consider the rescaled field defined by $\A^\eps(x,\omega) = \A(x/\eps,\omega)$ for any $\eps > 0$, which satisfies $\A^\eps(x,\omega) \in \cM$ for any $\eps>0$, almost surely and almost everywhere.

Then, the family $\left( \A^\eps(\cdot,\omega) \right)_{\eps >0}$ G-converges (in the sense of Definition~\ref{Def-G}) to some homogenized limit $A^\star$ which is deterministic, constant, belongs to $\cM$ and is isotropic (and thus belongs to $\mathcal{I}$). 
\end{theorem}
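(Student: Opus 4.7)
The plan is to apply Theorem~\ref{homo-main} to obtain the G-convergence and the existence of a constant deterministic limit $A^\star \in \cM$, and then to establish the isotropy of $A^\star$ using the symmetry built into the problem.

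First I would verify the hypotheses of Theorem~\ref{homo-main}. The field $\A$ defined by $\A(x,\omega)\sigma = 2\mu(x,\omega)\sigma + \lambda(x,\omega)\,{\rm Tr}(\sigma)\,{\rm Id}$ is stationary in the sense of Definition~\ref{De:stationary} (because $\mu$ and $\lambda$ are), takes values in $\mathcal{I} \subset \cM$ by assumption, and therefore belongs to $L^\infty\big(\R^3, L^1(\Omega,\cM)\big)$. Theorem~\ref{homo-main} then yields, almost surely, the G-convergence of $(\A^\eps(\cdot,\omega))_{\eps>0}$ to a deterministic constant tensor $A^\star \in \cM$, given by the formula~\eqref{eq:A*} with $w_\sigma$ solving~\eqref{eq:corrector}.

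The heart of the proof is to show $A^\star \in \mathcal{I}$, i.e.\ that for every orthogonal $U \in \R^{3\times 3}$ and every $\sigma \in \cS$, one has
\[
 (U^T \sigma U) \cdot A^\star (U^T \sigma U) = \sigma \cdot A^\star \sigma.
\]
Using~\eqref{eq:A*} and testing the corrector equation~\eqref{eq:corrector} against $w_\sigma$ (permissible by stationarity, zero mean of $e(w_\sigma)$, and ergodicity), we obtain the energetic reformulation
\[
 \sigma \cdot A^\star \sigma
 = \mathbb{E}\Big[ (\sigma + e(w_\sigma)) \cdot \A\,(\sigma + e(w_\sigma)) \Big]
 = \inf_{v} \mathbb{E}\Big[(\sigma + e(v)) \cdot \A\,(\sigma + e(v))\Big],
\]
where the infimum runs over those vector fields $v$ on $\R^3 \times \Omega$ for which $e(v)$ is stationary with zero expectation. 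Fix an orthogonal $U$ and set $\tilde\sigma = U^T \sigma U$. Given a test field $v$, define $\tilde v(y,\omega) := U^T v(Uy, \theta_U \omega)$. A direct computation gives $e_y(\tilde v)(y,\omega) = U^T e_x(v)(Uy,\theta_U\omega) U$; the measure-preserving property of $\theta_U$ together with stochastic isotropy then shows that $e(\tilde v)$ is again stationary with zero mean, so the map $v \mapsto \tilde v$ is a bijection on the admissible class. Using the change of variable $x = Uy$ in the $B_R$-averaged energy, the pointwise isotropy relation $(U^T \tau U) \cdot \A(y,\theta_U\omega)\,(U^T \tau U) = \tau \cdot \A(y,\theta_U\omega)\,\tau$ (valid because $\A \in \mathcal{I}$ pointwise), and finally $\A(Uy,\omega) = \A(y,\theta_U\omega)$ together with the invariance of $\mathbb{P}$ under $\theta_U$, the energy functional for $\sigma$ at $v$ and the energy functional for $\tilde\sigma$ at $\tilde v$ coincide. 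Passing to the infimum gives $\sigma \cdot A^\star \sigma = \tilde\sigma \cdot A^\star \tilde\sigma$, hence $A^\star \in \mathcal{I}$ by the characterization recalled before~\eqref{eq:borne_I}.

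The main obstacle I anticipate is Step 3: rigorously justifying that the transformation $v \mapsto \tilde v$ maps the admissible class of test fields bijectively onto itself, and that the variational identity $\sigma \cdot A^\star \sigma = \inf_v \mathbb{E}[(\sigma + e(v))\cdot \A(\sigma + e(v))]$ holds over the correct function space. This requires a careful interplay between the commutation of $\theta_U$ with spatial translations (implicit in the notion of stochastic isotropy combined with stationarity) and the Hilbert-space framework underlying~\eqref{eq:corrector}. Once these functional-analytic points are pinned down, the isotropy of $A^\star$ follows from the energy-invariance argument, and all other conclusions (constancy, determinism, membership in $\cM$) are immediate consequences of Theorem~\ref{homo-main}.
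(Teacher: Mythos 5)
The paper does not actually prove this statement: it is introduced with ``Let us recall the following theorem'' and a citation to \cite[Theorem~12.5]{JKO}, so there is no in-paper proof to compare against. Your reconstruction is nonetheless the standard argument and its architecture is sound: Theorem~\ref{homo-main} gives the almost-sure G-convergence to a constant deterministic $A^\star\in\cM$, and isotropy follows from the invariance of the variational characterization $\sigma\cdot A^\star\sigma=\inf_v\mathbb{E}\big[(\sigma+e(v))\cdot\A\,(\sigma+e(v))\big]$ under $(\sigma,v)\mapsto(U^T\sigma U,\tilde v)$. Two remarks on the points you yourself flag, which is indeed where the real work lies. First, the admissible class is not ``vector fields $v$ on $\R^3\times\Omega$'' but the closed subspace of $L^2(\Omega,\cS)$ of zero-mean \emph{potential} stationary fields; it is cleaner to define the transformation directly on strains, $\xi\mapsto U^T\xi(Uy,\cdot)U$, than to realize $\tilde v$ as a displacement. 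Second, for $e(\tilde v)$ to be stationary one needs a compatibility between $\theta_U$ and the translation group, of the form $\theta_U\circ T_{Uy}=T_y\circ\theta_U$, together with invertibility of $\theta_U$ (with $\theta_{U^{-1}}=\theta_U^{-1}$) to make $v\mapsto\tilde v$ a bijection and to sort out whether $\theta_U$ or $\theta_U^{-1}$ appears in the definition of $\tilde v$; Definition~\ref{def:isorand} as stated in the paper does not contain this commutation relation, so it must be taken as part of the hypotheses (as it is in the definition of \cite{JKO}). With these standard provisos your proof is complete. An equivalent and slightly more economical route, bypassing the variational formula altogether, is to note that the conjugated field $x\mapsto U\A(Ux,\omega)(U^T\cdot\,U)U^T$ equals $\A(\cdot,\theta_U\omega)$ by pointwise isotropy and stochastic isotropy, hence has the same law and therefore the same deterministic G-limit $A^\star$, while by the covariance of G-convergence under orthogonal changes of variables its G-limit is the conjugate of $A^\star$ by $U$; equating the two gives the isotropy of $A^\star$.
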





\subsection{Properties of layer potentials and vector spherical harmonics in $\R^3$} \label{sec:layers}

Vector spherical harmonics are important tools in the context of linear isotropic elasticity in dimension $d=3$ and we use them extensively in the remainder of this Section~\ref{sec:iso}. We collect here some properties useful to prove our subsequent results. In what follows, we make use of spherical coordinates, which are classically defined by the following diffeomorphism:
$$
\Phi: \left\{
\begin{array}{ccc}
  \R_+^\star \times (0,\pi) \times (0,2\pi) & \to & \mathcal{O} := \R^3 \setminus \{(x_1,0,x_3), \; x_1\in \R_+, \; x_3 \in \R\}
  \\
  (r,\theta,\phi) & \mapsto & \left(r \sin \theta \cos \phi, r \sin\theta \sin \phi, r\cos \theta \right)
\end{array}
\right. .
$$
For any $x \in \mathcal{O}$, we denote by $e_r(x)$, $e_\theta(x)$ and $e_\phi(x)$ the associated radial, polar and azimuthal unit vectors. For any smooth function $Z : \R^3 \to \R$, we denote by $\overline{Z}$ the function defined by
$$
\forall (\theta,\phi) \in (0,\pi) \times (0,2\pi), \quad \overline{Z}(\theta,\phi) = Z(\Phi(1,\theta,\phi)).
$$
In addition, we define the spherical gradient of $Z$ at a point $x \in \mathbb{S} \cap \mathcal{O}$ by
$$
\nabla_s Z(x) = e_\theta(x) \, \partial_\theta \overline{Z}(\theta,\phi) + e_\phi(x) \frac{1}{\sin \theta} \, \partial_\phi \overline{Z}(\theta,\phi), \quad \mbox{where} \quad (1,\theta,\phi) = \Phi^{-1}(x).
$$
We note that $\dps \nabla Z = \nabla_s Z + \big( e_r \cdot \nabla Z \big) \, e_r$ and that $e_r \cdot \nabla_s Z = 0$. If the function $Z$ is sufficiently smooth, its spherical gradient can be extended by continuity over the entire sphere $\mathbb{S}$.  

\medskip

The (standard, scalar-valued) spherical harmonics on the unit sphere $\mathbb{S}$ are denoted by $(Y_{lm})_{l\geq 0}^{|m|\leq l}$. They belong to $\mathcal{C}^\infty(\mathbb{S})$ and are normalized in the sense that 
$$
\langle Y_{lm}, Y_{l'm'} \rangle_{\mathbb{S}} = \int_{\mathbb{S}} Y_{lm} \, Y_{l'm'} = \delta_{ll'} \, \delta_{mm'}.
$$
%
%
For all degree $l\in \N$ and all order $m\in \Z$ such that $|m|\leq l$, we introduce the following functions $V_{lm}$, $W_{lm}$ and $X_{lm}$, defined on $\mathbb{S} \cap \mathcal{O}$ with values in $\R^3$, and which are called the vector spherical harmonics:
\begin{align*} 
  V_{lm} &= \nabla_s Y_{lm} - (l+1) \, Y_{lm} \, e_r,
  \\
  W_{lm} &= \nabla_s Y_{lm} + l \, Y_{lm} \, e_r,
  \\
  X_{lm} &= e_r \times \nabla_s Y_{lm},
\end{align*}
where we recall that $e_r$ is the unit radial vector and where the symbol $\times$ represents the cross product in $\R^3$. By continuity, these functions can be extended over the entire sphere $\mathbb{S}$.
We refer to~\cite{Hill} for more details about vector spherical harmonics. 

\medskip

We now give a brief introduction to the layer potentials and the associated boundary operators in isotropic elasticity, and refer to~\cite{SX} for comprehensive details. Let $\lambda,\mu \in \R$ such that $\mu>0$ and $2\mu + 3\lambda>0$.

We first introduce the single layer potential $\widetilde{\mathcal{V}}^{\lambda,\mu} : H^{-1/2}(\mathbb{S})^3 \to H^1_{\rm loc}(\R^3 \setminus \mathbb{S})^3$ defined by
\begin{equation}\label{single-layer}
  \forall \phi \in H^{-1/2}(\mathbb{S})^3, \qquad \forall x \in \R^3 \setminus \mathbb{S}, \qquad \left( \widetilde{\mathcal{V}}^{\lambda,\mu} \phi \right)(x) = \int_{\mathbb{S}} G^{\lambda,\mu}(x,y) \, \phi(y) \, dy,
\end{equation}
where $G^{\lambda,\mu} = (G^{\lambda,\mu}_{ij})_{1\leq i, j\leq 3}$ is the matrix-valued Green function defined as follows: for all $x = (x_i)_{1\leq i \leq 3} \neq y = (y_i)_{1\leq i \leq 3} \in \R^3$ and all $1\leq i,j \leq 3$,
$$
G^{\lambda,\mu}_{ij}(x,y) := \frac{1}{8 \pi \mu |x-y|} \left( \frac{\lambda+3\mu}{\lambda+2\mu} \, \delta_{ij} + \frac{\lambda+\mu}{\lambda+2\mu} \, \frac{(x_i-y_i)(x_j-y_j)}{|x-y|^2} \right). 
$$
This function is the Green function of the elasticity operator $L^{\lambda,\mu}$ defined by $\dps L^{\lambda,\mu} u = - {\rm div} \Big( 2 \mu \, e(u) + \lambda \, {\rm Tr}(e(u)) \, {\rm Id} \Big)$ for any sufficiently regular $u$, in the sense that, for any $1 \leq i \leq 3$, we have
$$
L^{\lambda,\mu} \, G^{\lambda,\mu}_i(\cdot,y) = \delta_y \, e_i \qquad \mbox{in $\R^3$},
$$
where $\delta_y$ in the Dirac mass in $y$ and where $G_i^{\lambda,\mu}(x,y) := \left( G_{ji}^{\lambda,\mu}(x,y) \right)_{1\leq j \leq 3}$ is the $i^{\rm th}$ column of the $3 \times 3$ symmetric matrix $G^{\lambda,\mu}(x,y)$.

\medskip

For all $\phi \in H^{-1/2}(\mathbb{S})$, the function $\widetilde{\mathcal{V}}^{\lambda,\mu} \phi$ is continuous across the sphere $\mathbb{S}$. We can therefore define the single layer boundary operator $\mathcal{V}^{\lambda,\mu} : H^{-1/2}(\mathbb{S})^3 \to H^{1/2}(\mathbb{S})^3$ as follows:
\begin{equation}\label{single-boundary}
 \forall \phi\in H^{-1/2}(\mathbb{S})^3, \qquad \forall x \in \mathbb{S}, \qquad \left( \mathcal{V}^{\lambda,\mu} \phi \right)(x) = \int_{\mathbb{S}} G^{\lambda,\mu}(x,y) \, \phi(y) \, dy.
\end{equation}
Moreover, we have the following property:
\begin{equation} \label{eq:covid18} 
  \forall \phi \in H^{-1/2}(\mathbb{S})^3, \qquad \widetilde{\mathcal{V}}^{\lambda,\mu} \phi \in L^2_{\rm loc}(\R^3)^3 \qquad \text{and} \qquad \nabla \left( \widetilde{\mathcal{V}}^{\lambda,\mu} \phi \right) \in L^2(\R^3)^{3 \times 3}.
\end{equation}
  
\medskip

The vector spherical harmonics are eigenvectors of the single layer boundary operator (see~\cite[Theorem~5.1]{SX}): we have
\begin{equation} \label{eigenS}
  \begin{array}{rcl}
  \mathcal{V}^{\lambda,\mu} \, V_{lm} &=& \dps \frac{(3l+1)\mu + l\lambda}{(2l+3)(2l+1) \mu(2\mu+\lambda)} \, V_{lm},
  \\ \noalign{\vskip 3pt}
  \mathcal{V}^{\lambda,\mu} \, W_{lm} &=& \dps \frac{(3l+2)\mu + (l+1)\lambda}{(2l-1)(2l+1) \mu(2\mu+\lambda)} \, W_{lm},
  \\ \noalign{\vskip 3pt}
  \mathcal{V}^{\lambda,\mu} \, X_{lm} &=& \dps \frac{1}{\mu (2l+1)} \, X_{lm},
  \end{array}
\end{equation}
for all $l\in\N$ and $m\in \Z$ such that $|m|\leq l$.

\medskip

For all $u \in H^1_{\rm loc}(\R^3)^3$ such that ${\rm div} \big(2\mu \, e(u) + \lambda \, {\rm Tr}(e(u)) \, {\rm Id} \big) \in L^2_{\rm loc}(\R^3)^3$, we denote by $\mathcal{T}_-^{\lambda,\mu}u$ (respectively $\mathcal{T}_+^{\lambda,\mu} u$) the interior (respectively exterior) trace of the function
$$
\big(2\mu \, e(u)+ \lambda \, {\rm Tr}(e(u)) \, {\rm Id}\big) n
$$
on the sphere $\mathbb{S}$ with respect to its outward pointing normal vector $n(x) = x$ at point $x \in \mathbb{S}$.
We have the following relation (see~\cite{BUI} or~\cite[Theorem~3.1]{SX}): 
\begin{equation}\label{ntod}
 \forall \phi \in H^{-1/2}(\mathbb{S})^3, \qquad \mathcal{T}_\mp^{\lambda,\mu} \, \widetilde{\mathcal{V}}^{\lambda,\mu} \, \phi = \pm \frac{1}{2} \phi + \mathcal{D}^{\star,\lambda,\mu} \, \phi, 
\end{equation}
where the (vector-valued) operator $\mathcal{D}^{\star,\lambda,\mu}$ is defined on $H^{-1/2}(\mathbb{S})^3$ by
\begin{equation}\label{eq:double}
 \forall \phi \in H^{-1/2}(\mathbb{S})^3, \quad \forall 1\leq i \leq 3, \quad \forall x \in \mathbb{S}, \qquad \left( \mathcal{D}^{\star,\lambda,\mu} \, \phi\right)_i(x) = \int_{\mathbb{S}} \left[ \mathcal{T}^{\lambda,\mu} \left( G_i^{\lambda,\mu}(\cdot,y) \right) \right](x) \cdot \phi(y) \, dy,
\end{equation}
where we recall that $G_i^{\lambda,\mu}$ is the $i^{\rm th}$ column of the $3 \times 3$ matrix $G^{\lambda,\mu}$ and where the above integral is to be understood in the sense of principal value (see also~\cite[Equation~(14)]{BUI} and~\cite[Equations~(3.4) and~(3.5)]{SX}). 

The operator $\mathcal{D}^{\star,\lambda,\mu}$ is the adjoint operator of the so-called double layer boundary operator $\mathcal{D}^{\lambda,\mu}$ which is discussed in details in~\cite{SX}. The vector spherical harmonics are also eigenvectors of $\mathcal{D}^{\star,\lambda,\mu}$ (see~\cite[Corollary~5.1]{SX}):
\begin{equation} \label{eigenD}
  \begin{array}{rcl}
  \mathcal{D}^{\star,\lambda,\mu} \, V_{lm} &=& \dps - \frac{2(2l^2+6l+1)\mu - 3\lambda}{2(2l+1)(2l+3) (2\mu+\lambda)} \, V_{lm},
  \\ \noalign{\vskip 3pt}
  \mathcal{D}^{\star,\lambda,\mu} \, W_{lm} &=& \dps \frac{2(2l^2-2l-3)\mu - 3\lambda}{2(2l+1)(2l-1) (2\mu+\lambda)} \, W_{lm},
  \\ \noalign{\vskip 3pt}
  \mathcal{D}^{\star,\lambda,\mu} \, X_{lm} &=& \dps \frac{1}{2\mu (2l+1)} \, X_{lm}. 
  \end{array}
\end{equation}
Let us now introduce the following auxiliary functions: for all $1\leq i\neq j \leq 3$ and all $x\in \mathbb{S}$, we set
\begin{align*}
  Z_0(x) & := x = \sum_{i=1}^3 x_i \, {\bf e}_i,
  \qquad
  Z_i(x) := x_i \, {\bf e}_i - \frac{1}{3} \, x,
  \\
  Z_{ij}(x) & := x_i \, {\bf e}_j - x_j \, {\bf e}_i,
  \qquad
  R_{ij}(x) := x_i \, {\bf e}_j + x_j \, {\bf e}_i.
\end{align*}
Using~\cite[Appendix~A]{SX}, we have that $Z_0$ is colinear to the vector spherical harmonic $V_{00}$. Likewise, the function $Z_{23}$ (resp. $Z_{12}$, $Z_{13}$) is colinear to $X_{11}$ (resp. $X_{10}$, $X_{1,-1}$). In addition, the function $R_{12}$ (resp. $R_{23}$, $R_{13}$) is colinear to $W_{2,-2}$ (resp. $W_{2,-1}$, $W_{2,1}$). Lastly, the function $Z_3$ is colinear to $W_{20}$. Using again~\cite[Appendix~A]{SX}, we have that, for any $\dps x = \sum_{i=1}^3 x_i \, {\bf e}_i \in \mathbb{S}$,
$$
Z_1(x) = \frac{1}{6} \, (x_1,x_2,-2x_3)^T + \frac{1}{2} \, (x_1,-x_2,0)^T = - \frac{1}{3} \, \sqrt{\frac{\pi}{5}} \ W_{20}(x) + \sqrt{\frac{\pi}{15}} \ W_{22}(x),
$$
which shows that the function $Z_1$ is a linear combination of $W_{20}$ and $W_{22}$. Since $Z_1 + Z_2 + Z_3 = 0$, the same property holds for $Z_2$. In view of~\eqref{eigenS} and~\eqref{eigenD}, we thus have, for all $1\leq i \neq j \leq 3$,
\begin{equation} \label{eq:covid5}
  \begin{array}{rcl}
    R_{ij}, Z_i & \in & \dps {\rm Ker}\left(\mathcal{V}^{\lambda,\mu} - \frac{8\mu+3\lambda}{15\mu(2\mu+\lambda)} \right) \cap {\rm Ker}\left(\mathcal{D}^{\star,\lambda,\mu} - \frac{2\mu-3\lambda}{30(2\mu+\lambda)} \right),
    \\ \noalign{\vskip 3pt}
    Z_{ij} & \in & \dps {\rm Ker}\left(\mathcal{V}^{\lambda,\mu} - \frac{1}{3\mu}\right) \cap {\rm Ker}\left(\mathcal{D}^{\star,\lambda,\mu} - \frac{1}{6\mu} \right),
    \\ \noalign{\vskip 3pt}
    Z_0 & \in & \dps {\rm Ker}\left(\mathcal{V}^{\lambda,\mu} - \frac{1}{3(2\mu+\lambda)} \right) \cap {\rm Ker}\left(\mathcal{D}^{\star,\lambda,\mu} - \frac{-2\mu+3\lambda}{6(2\mu+\lambda)} \right).
  \end{array}
\end{equation}
These properties are collected in Table~\ref{table1}.


\bigskip

\begin{table}[htbp]
\centering
\begin{tabular}{|c|c|c|c|}
  \hline 
  \diagbox{operator}{eigenvector} & $Z_0$ & $Z_{ij}$ ($i \neq j$) & $R_{ij}$ (for $i \neq j$) and $Z_i$
  \\
  \hline \noalign{\vskip 3pt}
  $\mathcal{V}^{\lambda,\mu}$ & $\dps \frac{1}{3(2\mu+\lambda)}$ & $\dps \frac{1}{3\mu}$ & $\dps \frac{8\mu+3\lambda}{15\mu(2\mu+\lambda)}$
  \\
  \noalign{\vskip 3pt} \hline \noalign{\vskip 3pt}
  $\mathcal{D}^{\star,\lambda,\mu}$ & $\dps \frac{-2\mu+3\lambda}{6(2\mu+\lambda)}$ & $\dps \frac{1}{6\mu}$ & $\dps \frac{2\mu-3\lambda}{30(2\mu+\lambda)}$
  \\
  \noalign{\vskip 3pt} \hline
\end{tabular}
\caption{Overview of some eigenvalues of $\mathcal{V}^{\lambda,\mu}$ and $\mathcal{D}^{\star,\lambda,\mu}$, following~\eqref{eq:covid5}. \label{table1}}
\end{table}
 

\bigskip

Let us now prove the following auxiliary lemma (in the sequel, we only use the first part of the lemma, regarding the case of a symmetric matrix $C$; we state here the second part regarding the case of a skew-symmetric matrix $S$ for the sake of completeness).

\begin{lemma}
\label{phi-and-varphi}
Let $\lambda,\mu \in \R$ such that $\mu>0$ and $2\mu + 3 \lambda>0$. Let $C = (C_{ij})_{1\leq i, j\leq 3} \in \R^{3\times 3}$ be a symmetric matrix, and let $p(x) = Cx$ for all $x\in \R^3$. We define the function $\varphi_C^{\lambda,\mu}$ by 
\begin{equation} \label{eq:def_varphiC}
\forall x \in \R^3, \qquad \varphi_C^{\lambda,\mu}(x) = \left(\frac{15\mu(2\mu+\lambda)}{8\mu+3\lambda} \, C + \frac{3(\mu+\lambda) (2\mu+\lambda)}{8\mu+3\lambda} \ {\rm Tr}(C) \, {\rm Id} \right) x.
\end{equation}
It holds that
\begin{equation}\label{pi}
\mathcal{V}^{\lambda,\mu} \, \varphi_C^{\lambda,\mu} = p|_{\mathbb{S}}.
\end{equation}
Let now $S = (S_{ij})_{1\leq i, j\leq 3}\in \R^{3\times 3}$ be a skew-symmetric matrix, and let $q(x) = Sx$ for all $x\in \R^3$. We define the function $\psi^\mu_S$ by
\begin{equation} \label{eq:def_psiC}
\forall x \in \R^3, \qquad \psi^\mu_S(x) = 3 \mu \, S x.
\end{equation}
Then, it holds that
\begin{equation}\label{qi}
\mathcal{V}^{\lambda,\mu} \, \psi^\mu_S = q|_{\mathbb{S}}.
\end{equation}
\end{lemma}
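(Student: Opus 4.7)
The strategy is to decompose the restriction of the linear map $p(x) = Cx$ to the unit sphere $\mathbb{S}$ into the basis $\{Z_0, Z_i, R_{ij}\}$ (for the symmetric case) or $\{Z_{ij}\}$ (for the skew-symmetric case) introduced above, and then to invert $\mathcal{V}^{\lambda,\mu}$ diagonally using the eigenvalues collected in~\eqref{eq:covid5} (Table~\ref{table1}).

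\medskip

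\textbf{Symmetric case.} Writing $Cx = \sum_{i,j} C_{ij} x_j \, \mathbf{e}_i$ and using the symmetry $C_{ij}=C_{ji}$, I would separate the diagonal part $\sum_i C_{ii} x_i \mathbf{e}_i$ and the off-diagonal part $\sum_{i<j} C_{ij}(x_i \mathbf{e}_j + x_j \mathbf{e}_i)$. The off-diagonal part is exactly $\sum_{i<j} C_{ij} R_{ij}(x)$. For the diagonal part, the identity $x_i \mathbf{e}_i = Z_i(x) + \frac{1}{3} x$ combined with $Z_0(x)=x$ gives $\sum_i C_{ii} x_i \mathbf{e}_i = \sum_i C_{ii} Z_i(x) + \tfrac{1}{3}\mathrm{Tr}(C)\,Z_0(x)$. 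Altogether,
\begin{equation*}
 p(x) = Cx = \sum_{i<j} C_{ij}\, R_{ij}(x) + \sum_{i=1}^3 C_{ii}\, Z_i(x) + \tfrac{1}{3}\mathrm{Tr}(C)\, Z_0(x).
\end{equation*}
Since $R_{ij}$ and $Z_i$ lie in the eigenspace of $\mathcal{V}^{\lambda,\mu}$ associated with eigenvalue $\tfrac{8\mu+3\lambda}{15\mu(2\mu+\lambda)}$ and $Z_0$ corresponds to eigenvalue $\tfrac{1}{3(2\mu+\lambda)}$, the natural candidate for $\varphi$ is
\begin{equation*}
 \varphi(x) = \tfrac{15\mu(2\mu+\lambda)}{8\mu+3\lambda} \Bigl[\sum_{i<j} C_{ij}\, R_{ij}(x) + \sum_i C_{ii}\, Z_i(x)\Bigr] + (2\mu+\lambda)\,\mathrm{Tr}(C)\, Z_0(x),
\end{equation*}
and by construction $\mathcal{V}^{\lambda,\mu}\varphi = p|_{\mathbb{S}}$. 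A short algebraic simplification, rewriting the bracket as $Cx - \tfrac{1}{3}\mathrm{Tr}(C)\, x$ and combining with the $Z_0$ term, yields precisely the announced formula~\eqref{eq:def_varphiC} with coefficient $\tfrac{3(\mu+\lambda)(2\mu+\lambda)}{8\mu+3\lambda}$ in front of $\mathrm{Tr}(C)\, x$.

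\medskip

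\textbf{Skew-symmetric case.} Using $S_{ji}=-S_{ij}$, one gets $Sx = \sum_{i<j} S_{ij}(x_j \mathbf{e}_i - x_i \mathbf{e}_j) = - \sum_{i<j} S_{ij}\, Z_{ij}(x)$. Since each $Z_{ij}$ ($i\neq j$) is an eigenvector of $\mathcal{V}^{\lambda,\mu}$ with eigenvalue $\tfrac{1}{3\mu}$, inverting gives $\psi(x) = -3\mu \sum_{i<j} S_{ij}\, Z_{ij}(x) = 3\mu\, Sx$, which is exactly~\eqref{eq:def_psiC}.

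\medskip

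\textbf{Main difficulty.} There is essentially no conceptual obstacle once the decompositions~\eqref{eq:covid5} are at hand; the work consists in bookkeeping. The only point that requires mild care is recognising that the coefficient $\tfrac{3(\mu+\lambda)(2\mu+\lambda)}{8\mu+3\lambda}$ in~\eqref{eq:def_varphiC} arises from combining the $Z_0$ contribution with the extra $-\tfrac{1}{3}\mathrm{Tr}(C)\,x$ term coming from rewriting $\sum_i C_{ii} Z_i$ back into Cartesian form, namely
\begin{equation*}
 (2\mu+\lambda) - \tfrac{5\mu(2\mu+\lambda)}{8\mu+3\lambda} = \tfrac{3(\mu+\lambda)(2\mu+\lambda)}{8\mu+3\lambda}.
\end{equation*}
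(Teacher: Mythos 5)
Your proposal is correct and follows essentially the same route as the paper: decompose $p$ into the eigenvectors $R_{ij}$, $Z_i$ and $Z_0$ of $\mathcal{V}^{\lambda,\mu}$ as given in~\eqref{eq:covid5}, invert diagonally, and recombine the $Z_0$ contribution with the $-\tfrac{1}{3}\mathrm{Tr}(C)\,x$ term to recover the coefficient $\tfrac{3(\mu+\lambda)(2\mu+\lambda)}{8\mu+3\lambda}$ (the paper's~\eqref{sum-fori<j}--\eqref{sum-fori=j} perform exactly this computation in the reverse direction, and the resulting identity is recorded as~\eqref{simple-p}). The skew-symmetric case is likewise handled as the paper indicates.
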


\begin{proof}
We only detail here the computations yielding~\eqref{pi}. Similar but much simpler computations yield~\eqref{qi}. For all $x = (x_i)_{1\leq i \leq 3} \in \R^3$, let us define
$$
\widehat{p}^C(x) = \sum\limits_{i=1}^3 C_{ii} \, x_i \, {\bf e}_i, \qquad \widetilde{p}^C(x) = \sum\limits_{1\leq i\neq j\leq 3} C_{ij} \, x_j \, {\bf e}_i, 
$$
so that $p = \widehat{p}^C + \widetilde{p}^C$. Since $C$ is symmetric, we have $\dps \widetilde{p}^C = \sum_{ 1\leq i<j\leq 3} C_{ij} \, R_{ij}$. In view of~\eqref{eq:covid5}, we hence obtain $\dps \widetilde{p}^C \in {\rm Ker}\left(\mathcal{V}^{\lambda,\mu} - \frac{8\mu+3\lambda}{15\mu(2\mu+\lambda)}\right)$ and therefore
\begin{equation}\label{sum-fori<j}
  \widetilde{p}^C = \frac{15\mu(2\mu+\lambda)}{8\mu+3\lambda} \ \mathcal{V}^{\lambda,\mu} \, \widetilde{p}^C. 
\end{equation}
%
%
Moreover, $\dps \widehat{p}^C = \sum_{i=1}^3 C_{ii} \left( Z_i + \frac{1}{3} Z_0\right) = \frac{1}{3} ({\rm Tr} (C)) \, Z_0 + \widehat{p}^C_Z$ where $\dps \widehat{p}^C_Z := \sum_{i=1}^3 C_{ii} \, Z_i$. In view of~\eqref{eq:covid5}, we observe that
\begin{equation}\label{eq:defphatCZ}
  \widehat{p}^C_Z \in {\rm Ker}\left( \mathcal{V}^{\lambda, \mu} - \frac{8\mu + 3\lambda}{15\mu (2\mu +\lambda)}\right).
\end{equation}
Using that $Z_0$ is also an eigenvector of $\mathcal{V}^{\lambda,\mu}$ (see~\eqref{eq:covid5}), we deduce from~\eqref{eq:defphatCZ} that
\begin{align}
  \widehat{p}^C
  &=
  \frac{1}{3} ({\rm Tr}(C)) \, Z_0 + \widehat{p}^C_Z
  \nonumber
  \\
  &=
  (2\mu+\lambda) \, {\rm Tr}(C) \, \mathcal{V}^{\lambda,\mu}(Z_0) + \frac{15\mu(2\mu+\lambda)}{8\mu+3\lambda} \, \mathcal{V}^{\lambda,\mu}\left(\widehat{p}^C_Z\right)
  \nonumber
  \\
  &=
  \Big( 2\mu + \lambda - \frac{5\mu(2\mu+\lambda)}{8\mu+3\lambda} \Big) \, {\rm Tr}(C) \, \mathcal{V}^{\lambda,\mu}(Z_0) + \frac{15\mu(2\mu+\lambda)}{8\mu+3\lambda} \, \mathcal{V}^{\lambda,\mu} \Big (\frac{1}{3} \, {\rm Tr}(C) \, Z_0 + \widehat{p}^C_Z \Big)
  \nonumber
  \\
  &=
  \frac{3(\mu+\lambda) (2\mu+\lambda)}{8\mu+3\lambda} \, {\rm Tr}(C) \, \mathcal{V}^{\lambda,\mu}(Z_0) + \frac{15\mu(2\mu+\lambda)}{8\mu+3\lambda} \, \mathcal{V}^{\lambda,\mu}\left(\widehat{p}^C\right). \label{sum-fori=j}
\end{align}
Collecting~\eqref{sum-fori<j} and~\eqref{sum-fori=j}, we obtain~\eqref{pi}.
\end{proof}

We now state an identity, which is used in the sequel, and which is a direct consequence of the definition of $\varphi_C^{\lambda,\mu}$ and of the fact that $\dps p = \widetilde{p}^C + \widehat{p}^C_Z + \frac{1}{3} ({\rm Tr}(C)) \, Z_0$ (where $\widetilde{p}^C$ and $\widehat{p}^C_Z$ are defined in the proof of Lemma~\ref{phi-and-varphi}): for any $\lambda,\mu\in\R$ satisfying~\eqref{restriction-lame} and for any symmetric matrix $C\in \R^{3\times 3}$, it holds that
\begin{equation}\label{simple-p}
\varphi_C^{\lambda,\mu} = (2\mu+\lambda) \, ({\rm Tr}(C)) \, Z_0 + \frac{15\mu(2\mu+\lambda)}{8\mu+3\lambda} \, \left( \widetilde{p}^C + \widehat{p}^C_Z \right).
\end{equation}


\subsection{The Eshelby's problem}

For the proofs of some results in Section~\ref{sec:pro}, it will be useful to express the solution of the well-known Eshelby's problem using the single layer potential. This is the aim of this section, which culminates in the relations~\eqref{energy-isotropic_1} and~\eqref{energy-isotropic_3}.

\medskip

Let $(\lambda_0,\mu_0) \in \mathbb{R}^2$ and $(\lambda,\mu) \in \mathbb{R}^2$ be Lam\'e coefficients such that $\mu>0$, $\mu_0>0$, $2\mu + 3 \lambda>0$ and $2\mu_0 + 3 \lambda_0 >0$. We denote by $A_0 := A_{\rm iso}^{\lambda_0,\mu_0}$ and by $A := A_{\rm iso}^{\lambda,\mu}$ the corresponding isotropic elasticity tensors (see~\eqref{eq:def_A-is-lambda-mu}). For a fixed $\sigma\in \mathcal{S}$, let $w_\sigma^{A_0,A}$ be the unique solution in $V_0$ to the problem~\eqref{p-corrector} with $\A = A_0$. Otherwise stated, the function $w_\sigma^{A_0,A}$ is the unique solution in $V_0$ to 
\begin{equation}\label{Eshelby-isotropic}
- {\rm div} \Big[ 2\widetilde{\mu} \, \Big(\sigma+e\big(w_\sigma^{A_0,A}\big)\Big) + \widetilde{\lambda} \, {\rm Tr} \Big(\sigma+e\big(w_\sigma^{A_0,A}\big)\Big) \, {\rm Id} \Big] = 0 \quad \mbox{in $\mathcal{D}'(\R^3)^3$}, 
\end{equation}
where
$$
\widetilde{\mu} = \mu_0 \, \chi_B + \mu \, \big(1-\chi_B\big) \quad \mbox{and} \quad \widetilde{\lambda} = \lambda_0 \, \chi_B + \lambda \, \big(1-\chi_B\big),
$$
where $\chi_B$ is the characteristic function of the unit ball $B$. The Eshelby problem~\eqref{Eshelby-isotropic} is equivalent to the following problem:
\begin{subequations}
\begin{align}
  - {\rm div} \Big[ 2\mu_0 \, e\big(w_\sigma^{A_0,A}\big) + \lambda_0 \, {\rm Tr}\Big(e\big(w_\sigma^{A_0,A}\big)\Big) \, {\rm Id} \Big] &= 0 \quad \text{in $B$}, \label{eshelby:a}
  \\
  - {\rm div} \Big[ 2\mu \, e\big(w_\sigma^{A_0,A}\big) + \lambda \, {\rm Tr}\Big(e\big(w_\sigma^{A_0,A}\big)\Big) \, {\rm Id} \Big] &= 0 \quad \text{in $\R^3 \setminus \overline{B}$}, \label{eshelby:b}
  \\
  \llbracket w_\sigma^{A_0,A} \rrbracket &= 0 \quad \text{on $\mathbb{S}$}, \label{eshelby:c}
  \\
  \left\llbracket \mathcal{T} w_\sigma^{A_0,A} \right\rrbracket &= \big( 2(\mu-\mu_0) \, \sigma+ (\lambda-\lambda_0) \, {\rm Tr} (\sigma) \, {\rm Id} \big) n \quad \text{on $\mathbb{S}$}, \label{eshelby:d}
\end{align}
\end{subequations}
where
\begin{gather*}
  \llbracket w_\sigma^{A_0,A} \rrbracket = \gamma^+ w_\sigma^{A_0,A} - \gamma^- w_\sigma^{A_0,A},
  \\
  \llbracket \mathcal{T} w_\sigma^{A_0,A} \rrbracket = \mathcal{T}_-^{\lambda_0,\mu_0} w_\sigma^{A_0,A} - \mathcal{T}_+^{\lambda,\mu} w_\sigma^{A_0,A},
\end{gather*}
where $\gamma^+: H^1_{\rm loc}(\R^3 \setminus \overline{B})^3 \to L^2(\mathbb{S})^3$ (respectively $\gamma^-: H^1(B)^3 \to L^2(\mathbb{S})^3$) denotes the exterior (respectively interior) trace application onto $\mathbb{S}$, and where $\mathcal{T}_-^{\lambda_0,\mu_0} w_\sigma^{A_0,A}$ (respectively $\mathcal{T}_+^{\lambda,\mu} w_\sigma^{A_0,A}$) denotes the interior (respectively exterior) trace of the normal stress associated to the displacement $w_\sigma^{A_0,A}$ and the Lam\'e coefficients $(\lambda_0,\mu_0)$ (respectively $(\lambda,\mu)$). Recall that $\mathcal{T}_-^{\lambda_0,\mu_0}$ has been defined in Section~\ref{sec:layers}. The equation~\eqref{eshelby:c} thus enforces the continuity of the displacement across $\mathbb{S}$, while the equation~\eqref{eshelby:d} enforces the continuity of the normal stress.

\begin{lemma} \label{Elsheby}
Consider the Eshelby's problem~\eqref{eshelby:a}--\eqref{eshelby:d} with $\sigma = \sigma^{kl}$ for some $1\leq k,l \leq 3$, where $\sigma^{kl} \in \mathcal{S}$ is defined by~\eqref{Sij}. This problem admits a unique solution $w_{\sigma^{kl}}^{A_0,A}$ in $V_0$, and this solution reads
\begin{equation}\label{sol-E_1}
\forall x\in \R^3, \qquad w_{\sigma^{kl}}^{A_0,A}(x) = \begin{cases}
  C^{kl} \, x & \mbox{if $|x|\leq 1$},
  \\
 \widetilde{\mathcal{V}}^{\lambda,\mu} \, \varphi^{\lambda,\mu}_{C^{kl}}(x) & \mbox{if $|x|>1$}, 
\end{cases}
\end{equation}
%
%
%
where $C^{kl} \in \R^{3\times 3}$ is the constant symmetric matrix given by
\begin{equation}\label{Cij}
  C^{kl} = \begin{cases}
    \dps 2 (\mu-\mu_0) \, \left(2\mu_0 + \frac{14\mu+9\lambda}{8\mu+3\lambda} \, \mu \right)^{-1} \sigma^{kl} & \mbox{if $k\neq l$},
    \\ \noalign{\vskip 3pt}
    \dps \left( (\lambda-\lambda_0) - \frac{2(\mu-\mu_0) \, \left(\lambda_0 + \frac{6\mu+\lambda}{8\mu+3\lambda} \, \mu\right)}{2\mu_0 + \frac{14\mu+9\lambda}{8\mu+3\lambda} \, \mu} \right) \Big(2\mu_0+3\lambda_0+4\mu\Big)^{-1} \, {\rm Id} + \frac{2(\mu-\mu_0)}{2\mu_0 + \frac{14\mu+9\lambda}{8\mu+3\lambda} \, \mu} \, \sigma^{kl} & \mbox{if $k=l$}, 
\end{cases}
\end{equation}
and where $\varphi^{\lambda,\mu}_{C^{kl}}$ is defined by~\eqref{eq:def_varphiC}.
\end{lemma}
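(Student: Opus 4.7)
Uniqueness in $V_0$ is immediate: \eqref{eshelby:a}--\eqref{eshelby:d} is exactly the transmission reformulation of problem~\eqref{p-corrector} with $\mathcal{A} = A_0 = A_{\rm iso}^{\lambda_0,\mu_0}$, so Propositions~\ref{prop:existence} and~\ref{prop:existence_edp} grant a unique solution in $V_0$. What remains is to verify that the explicit ansatz~\eqref{sol-E_1}--\eqref{Cij} is indeed this solution. The plan is to (i) check that the ansatz lies in $V_0$, (ii) verify the bulk equations~\eqref{eshelby:a}--\eqref{eshelby:b} and the displacement continuity~\eqref{eshelby:c} using the properties of the single layer potential, and (iii) derive the formula~\eqref{Cij} for $C^{kl}$ by imposing the traction jump~\eqref{eshelby:d}.

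Step (i) is straightforward: inside $B$, the candidate $x \mapsto C^{kl} x$ has constant gradient and vanishes in mean on $B$ by symmetry; outside $B$, the regularity~\eqref{eq:covid18} of the single layer potential gives $w \in L^2_{\rm loc}$ and $\nabla w \in L^2(\R^3 \setminus \overline{B})^{3\times 3}$. For step (ii), equation~\eqref{eshelby:a} holds trivially because $e(w) = C^{kl}$ is constant inside $B$, while \eqref{eshelby:b} follows from the fact that $G^{\lambda,\mu}$ is the Green function of the elasticity operator $L^{\lambda,\mu}$, so $\widetilde{\mathcal{V}}^{\lambda,\mu} \varphi_{C^{kl}}^{\lambda,\mu}$ solves the homogeneous elasticity equation with parameters $(\lambda,\mu)$ on $\R^3 \setminus \mathbb{S}$. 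For the displacement continuity~\eqref{eshelby:c}, the key ingredient is~\eqref{pi}: the single layer potential is continuous across $\mathbb{S}$, so the exterior trace of $\widetilde{\mathcal{V}}^{\lambda,\mu} \varphi_{C^{kl}}^{\lambda,\mu}$ equals $\mathcal{V}^{\lambda,\mu} \varphi_{C^{kl}}^{\lambda,\mu} = p|_{\mathbb{S}}$ with $p(x) = C^{kl} x$, which matches the interior value.

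Step (iii) is the main obstacle. Using~\eqref{ntod}, the exterior traction reads $\mathcal{T}_+^{\lambda,\mu} w = -\tfrac{1}{2} \varphi_{C^{kl}}^{\lambda,\mu} + \mathcal{D}^{\star,\lambda,\mu} \varphi_{C^{kl}}^{\lambda,\mu}$ on $\mathbb{S}$, while the interior traction equals $\bigl( 2 \mu_0 C^{kl} + \lambda_0 \, {\rm Tr}(C^{kl}) \, {\rm Id} \bigr) x$ (since $e(C^{kl} x) = C^{kl}$ and $n(x) = x$ on $\mathbb{S}$). Substituting these together with the decomposition~\eqref{simple-p} of $\varphi_{C^{kl}}^{\lambda,\mu}$ into~\eqref{eshelby:d} reduces the jump condition to a linear equation for $C^{kl}$, which I would solve by projecting onto the joint eigenvectors of $\mathcal{V}^{\lambda,\mu}$ and $\mathcal{D}^{\star,\lambda,\mu}$ listed in Table~\ref{table1}. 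In the case $k \neq l$, $\sigma^{kl}$ is a multiple of $R_{kl}$ on $\mathbb{S}$ and ${\rm Tr}(\sigma^{kl}) = 0$, so the problem reduces to a single scalar equation in the $R_{ij}$-eigenspace (eigenvalue $\tfrac{2\mu-3\lambda}{30(2\mu+\lambda)}$ of $\mathcal{D}^{\star,\lambda,\mu}$), whose solution is precisely the first line of~\eqref{Cij}. In the case $k = l$, $\sigma^{kk}$ decomposes as $\tfrac{1}{3} {\rm Id} + (\sigma^{kk} - \tfrac{1}{3}{\rm Id})$, with the trace part lying in the $Z_0$-eigenspace and the traceless diagonal part in the span of the $Z_i$'s; matching coefficients in each eigenspace yields a decoupled $2 \times 2$ linear system whose unique solution gives the second line of~\eqref{Cij}. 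The algebra is mechanical once the eigenvalue table is applied, but it is the only genuinely computational step of the proof.
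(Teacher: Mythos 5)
Your proposal is correct and follows essentially the same route as the paper: uniqueness via the well-posedness of the embedded corrector problem, verification of the bulk equations and of~\eqref{eshelby:c} via~\eqref{pi} and the continuity of the single layer potential, and determination of $C^{kl}$ from the traction jump~\eqref{eshelby:d} computed with~\eqref{ntod}, the decomposition~\eqref{simple-p} and the eigenvalues of Table~\ref{table1}. The only (immaterial) difference is that the paper computes the jump for a general symmetric $C^{kl}$ and then checks that~\eqref{Cij} satisfies it, whereas you solve the resulting (triangular) linear system for $C^{kl}$ directly.
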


Although the expression of $C^{kl}$ is tedious, the proof of this lemma, which is given below, is not difficult.



\begin{corollary} \label{Elsheby_id}
Consider the Eshelby's problem~\eqref{eshelby:a}--\eqref{eshelby:d} with $\sigma = {\rm Id}$. The unique solution $w_{\rm Id}^{A_0,A}$ in $V_0$ to that problem reads as
\begin{equation}\label{sol-E_2}
\forall x \in \R^3, \qquad w_{\rm Id}^{A_0,A}(x) = \begin{cases}
  C \, x & \mbox{if $|x|\leq 1$},
  \\
  \widetilde{\mathcal{V}}^{\lambda,\mu} \, \varphi^{\lambda,\mu}_C(x) & \mbox{if $|x|>1$},
\end{cases}
\end{equation}
where the matrix $C$ is given by $\dps C = \frac{2(\mu-\mu_0) + 3(\lambda-\lambda_0)}{2\mu_0 + 3\lambda_0 + 4\mu} \ {\rm Id}$.
\end{corollary}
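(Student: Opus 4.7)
The plan is to deduce the corollary from Lemma~\ref{Elsheby} by exploiting the linearity of the Eshelby problem~\eqref{eshelby:a}--\eqref{eshelby:d} with respect to the prescribed strain $\sigma$. Since the right-hand side of the transmission condition~\eqref{eshelby:d} depends linearly on $\sigma$ and the bulk equations~\eqref{eshelby:a}--\eqref{eshelby:b} together with~\eqref{eshelby:c} are linear in $w$, the mapping $\mathcal{S} \ni \sigma \mapsto w_\sigma^{A_0,A} \in V_0$ is linear. I would first note the identity ${\rm Id} = \sigma^{11} + \sigma^{22} + \sigma^{33}$, which follows at once from~\eqref{Sij}, so that $w_{\rm Id}^{A_0,A} = w_{\sigma^{11}}^{A_0,A} + w_{\sigma^{22}}^{A_0,A} + w_{\sigma^{33}}^{A_0,A}$.

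Next I would apply Lemma~\ref{Elsheby} to each diagonal basis element $\sigma^{kk}$ to obtain $w_{\sigma^{kk}}^{A_0,A}(x) = C^{kk} x$ in $B$ and $w_{\sigma^{kk}}^{A_0,A}(x) = \widetilde{\mathcal{V}}^{\lambda,\mu}\,\varphi_{C^{kk}}^{\lambda,\mu}(x)$ outside. Summing the interior expressions yields $w_{\rm Id}^{A_0,A}(x) = C x$ in $B$ with $C = \sum_{k=1}^3 C^{kk}$. Using the $k=l$ formula in~\eqref{Cij}, this sum is of the form $3\alpha \,{\rm Id} + \beta\,({\rm Id})$, where $\alpha$ is the coefficient of ${\rm Id}$ and $\beta$ the coefficient of $\sigma^{kk}$ in~\eqref{Cij} (using $\sum_k \sigma^{kk}={\rm Id}$). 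The main computational step is then to simplify
\[
\frac{3(\lambda-\lambda_0)}{E} + \frac{2(\mu-\mu_0)}{D\,E}\Bigl[\,E - 3\lambda_0 - \tfrac{3(6\mu+\lambda)\mu}{8\mu+3\lambda}\,\Bigr],
\]
with $E = 2\mu_0+3\lambda_0+4\mu$ and $D = 2\mu_0 + \tfrac{(14\mu+9\lambda)\mu}{8\mu+3\lambda}$. A direct calculation shows that the bracketed expression equals exactly $D$, so that the $D$'s cancel and the scalar reduces to $\dfrac{2(\mu-\mu_0)+3(\lambda-\lambda_0)}{E}$, which is the claimed value.

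For the exterior expression, I would use the linearity of $C \mapsto \varphi_C^{\lambda,\mu}$ (clear from its definition~\eqref{eq:def_varphiC}) together with the linearity of $\widetilde{\mathcal{V}}^{\lambda,\mu}$, so that $\sum_{k} \widetilde{\mathcal{V}}^{\lambda,\mu}\,\varphi_{C^{kk}}^{\lambda,\mu} = \widetilde{\mathcal{V}}^{\lambda,\mu}\,\varphi_{C}^{\lambda,\mu}$ with $C = \sum_k C^{kk}$ as computed above. Uniqueness in $V_0$ is inherited from Proposition~\ref{prop:existence}, and the fact that $w_{\rm Id}^{A_0,A}$ as defined by~\eqref{sol-E_2} belongs to $V_0$ follows from~\eqref{eq:covid18} for the exterior part and linearity on $B$ (the zero mean on $B$ is automatic since $C\,x$ has zero mean there). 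The only real obstacle is the algebraic simplification above; once the cancellation $E - 3\lambda_0 - \tfrac{3(6\mu+\lambda)\mu}{8\mu+3\lambda} = D$ is observed, everything else is routine. Alternatively, one could check the corollary by direct verification of~\eqref{eshelby:a}--\eqref{eshelby:d} using Lemma~\ref{phi-and-varphi} (in particular the identity~\eqref{pi} for the continuity~\eqref{eshelby:c}), but the linearity argument is shorter given that Lemma~\ref{Elsheby} is available.
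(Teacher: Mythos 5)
Your proposal is correct and follows essentially the same route as the paper: write ${\rm Id} = \sum_{k=1}^3 \sigma^{kk}$, use linearity of the Eshelby problem to get $w_{\rm Id}^{A_0,A} = \sum_k w_{\sigma^{kk}}^{A_0,A}$ and $C = \sum_k C^{kk}$, and simplify; the algebraic cancellation $E - 3\lambda_0 - \tfrac{3(6\mu+\lambda)\mu}{8\mu+3\lambda} = D$ that you identify is exactly the "easy computation" the paper leaves to the reader, and it checks out.
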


\medskip



Using~\eqref{elseE_2} and~\eqref{sol-E_1}, we obtain that
\begin{equation} \label{eq:E_general}
\mathcal{E}^{A_0}_{\sigma^{kl}}(A)
=
2\mu_0 \, \sigma^{kl} \cdot (\sigma^{kl} + C^{kl}) + \lambda_0 \, {\rm Tr}(\sigma^{kl} + C^{kl}) \, {\rm Tr}(\sigma^{kl}) - 2\mu \, {\rm Tr}(C^{kl} \, \sigma^{kl}) - \lambda \, {\rm Tr}(\sigma^{kl}) \, {\rm Tr}(C^{kl})
\end{equation}
for any $1 \leq k,l \leq 3$ (and of course a similar expression when $\sigma = {\rm Id}$, using~\eqref{sol-E_2}). Using~\eqref{Cij}, we deduce from~\eqref{eq:E_general} that
\begin{equation}\label{energy-isotropic_1}
  \mathcal{E}^{A_0}_{\sigma^{kl}}(A)
  =
  \mu_0 - \frac{2(\mu-\mu_0)^2}{2\mu_0 + \frac{14\mu+9\lambda}{8\mu+3\lambda} \, \mu} \qquad \mbox{if $k\neq l$},
\end{equation}
and
\begin{multline*} 
  \mathcal{E}^{A_0}_{\sigma^{kk}}(A)
  =
  2\mu_0 + \lambda_0 + \big( 2(\mu_0-\mu) + 3(\lambda_0-\lambda) \big) \ \frac{(\lambda- \lambda_0) - \frac{2(\mu-\mu_0) \, \big(\lambda_0 + \frac{6\mu+\lambda}{8\mu+3\lambda} \, \mu\big)}{2\mu_0 + \frac{14\mu+9\lambda}{8\mu+3\lambda} \, \mu}}{2\mu_0+3\lambda_0+4\mu} \\ + \big( 2(\mu_0-\mu) + (\lambda_0-\lambda) \big) \, \frac{2(\mu-\mu_0)}{2\mu_0 + \frac{14\mu+9\lambda}{8\mu+3\lambda} \, \mu}.
\end{multline*}  
Likewise, using Corollary~\ref{Elsheby_id}, we infer from~\eqref{eq:E_general} the following formula in the case when $\sigma = {\rm Id}$:
\begin{equation}\label{energy-isotropic_3}
  \frac{1}{3} \, \mathcal{E}^{A_0}_{\rm Id}(A)
  =
  2\mu_0 + 3\lambda_0 -\frac{\big( 2(\mu_0-\mu) + 3(\lambda_0-\lambda) \big)^2}{2\mu_0+3\lambda_0+4\mu}.
\end{equation}

\begin{proof}[Proof of Lemma~\ref{Elsheby}]
By construction, the function $w_{\sigma^{kl}}^{A_0,A}$ defined by~\eqref{sol-E_1} belongs to $(H^1(B))^3 \cap (H^1_{\rm loc}(\R^3 \setminus \overline{B}))^3$ and its mean over $B$ vanishes. We now check that it satisfies~\eqref{eshelby:a}--\eqref{eshelby:d}. In the first place, it follows from simple calculations and properties of the single layer potentials that~\eqref{eshelby:a} and~\eqref{eshelby:b} are satisfied. Since the single layer potential is continuous across the sphere $\mathbb{S}$, we have $\gamma^+ w_{\sigma^{kl}}^{A_0,A} = \mathcal{V}^{\lambda,\mu} \varphi^{\lambda,\mu}_{C^{kl}}$. Using Lemma~\ref{phi-and-varphi}, we thus have, for any $x\in \mathbb{S}$, that $\dps \left(\gamma^+ w_{\sigma^{kl}}^{A_0,A}\right)(x) = C^{kl} \, x= \left(\gamma^- w_{\sigma^{kl}}^{A_0,A}\right)(x)$ and~\eqref{eshelby:c} is hence satisfied. We now check that~\eqref{eshelby:d} indeed holds. A direct computation yields that, for all $x\in \mathbb{S}$,
$$
\left( \mathcal{T}_-^{\lambda_0,\mu_0} \, w_{\sigma^{kl}}^{A_0,A} \right)(x) = \left(2\mu_0 \, C^{kl} + \lambda_0 \, {\rm Tr}(C^{kl}) \right) \, x,
$$
where we have used that the normal vector $n$ on $\mathbb{S}$ satisfies $n(x) = x$. To compute the exterior trace of the normal stress, we use~\eqref{ntod}, which reads
$$
\mathcal{T}_+^{\lambda,\mu} \, w_{\sigma^{kl}}^{A_0,A} = -\frac{1}{2} \varphi^{\lambda,\mu}_{C^{kl}} + \mathcal{D}^{\star,\lambda,\mu} \, \varphi^{\lambda,\mu}_{C^{kl}} \qquad \mbox{on $\mathbb{S}$}.
$$
In view of~\eqref{simple-p}, we have
$$
\varphi^{\lambda,\mu}_{C^{kl}} = \frac{15\mu(2\mu+\lambda)}{8\mu+3\lambda} \left(\widetilde{p}^{C^{kl}} + \widehat{p}_Z^{C^{kl}}\right) + (2\mu+\lambda) \, {\rm Tr}(C^{kl}) \, Z_0.  
$$
Using the spectral properties of $\mathcal{D}^{\star,\lambda,\mu}$ (see~\eqref{eq:covid5} and Table~\ref{table1}), we obtain
\begin{align*}
  & \mathcal{T}_+^{\lambda,\mu} \, w_{\sigma^{kl}}^{A_0,A}
  \\
  &=
  -\frac{1}{2} \varphi^{\lambda,\mu}_{C^{kl}} + \mathcal{D}^{\star,\lambda,\mu} \varphi^{\lambda,\mu}_{C^{kl}}
  \\  
  &=
  \left( -\frac{1}{2} + \frac{2\mu-3\lambda}{30(2\mu+\lambda)} \right) \, \frac{15\mu(2\mu+\lambda)}{8\mu+3\lambda} \left( \widetilde{p}^{C^{kl}} + \widehat{p}_Z^{C^{kl}} \right) + \left( -\frac{1}{2} + \frac{-2\mu+3\lambda}{6(2\mu+\lambda)}\right) \, (2\mu+\lambda) \, {\rm Tr}(C^{kl}) \, Z_0
  \\
  &=
  - \frac{14\mu+9\lambda}{8\mu+3\lambda} \, \mu \left (\widetilde{p}^{C^{kl}} + \widehat{p}_Z^{C^{kl}} + \frac{1}{3} {\rm Tr}(C^{kl}) \, Z_0 \right) - \frac{1}{3} \left(4 - \frac{14\mu+9\lambda}{8\mu+3\lambda} \right) \mu \, {\rm Tr}(C^{kl}) \, Z_0
  \\ 
  &=
  - \frac{14\mu+9\lambda}{8\mu+3\lambda} \, \mu \, C^{kl} \, Z_0 - \frac{6\mu+\lambda}{8\mu+3\lambda} \, \mu \, {\rm Tr}(C^{kl}) \, Z_0.
\end{align*}
We therefore have, for any $x \in\mathbb{S}$, that
\begin{equation}\label{eq:jump}
  \llbracket \mathcal{T} w_{\sigma^{kl}}^{A_0,A} \rrbracket(x) = \Big(2\mu_0 + \frac{14\mu+9\lambda}{8\mu+3\lambda} \, \mu \Big) \, C^{kl} \, x + \Big(\lambda_0 + \frac{6\mu+\lambda}{8\mu+3\lambda} \, \mu \Big) \, {\rm Tr}(C^{kl}) \, x.
\end{equation}
Inserting the definition~\eqref{Cij} of $C^{kl}$ in~\eqref{eq:jump}, we obtain~\eqref{eshelby:d} for $\sigma = \sigma^{kl}$.

\medskip

We are now left with showing that $w_{\sigma^{kl}}^{A_0,A}$ defined by~\eqref{sol-E_1} belongs to $V_0$. In view of~\eqref{eq:covid18} and of the properties already shown in this proof, we already know that $w_{\sigma^{kl}}^{A_0,A} \in L^2_{\rm loc}(\R^3)^3$ and that its mean over $B$ vanishes. Using~\eqref{eshelby:c} and again~\eqref{eq:covid18}, we have that $\nabla w_{\sigma^{kl}}^{A_0,A} \in L^2(\R^3)^{3 \times 3}$. We thus indeed have that $w_{\sigma^{kl}}^{A_0,A} \in V_0$, which concludes the proof of Lemma~\ref{Elsheby}.
\end{proof}

\begin{proof}[Proof of Corollary~\ref{Elsheby_id}]
We note that $\dps \sum_{k=1}^3 \sigma^{kk} = {\rm Id}$. Using the linearity of the Eshelby problem, we thus have $\dps w_{\rm Id}^{A_0,A} = \sum_{k=1}^3 w_{\sigma^{kk}}^{A_0,A}$. We hence obtain that the solution is given by~\eqref{sol-E_2} with $\dps C = \sum_{k=1}^3 C^{kk}$, with $C^{kk}$ given by~\eqref{Cij}. An easy computation yields the claimed expression of the matrix $C$.
\end{proof}





\section{Properties of the embedded corrector problem in isotropic elasticity} \label{sec:pro}

Using the technical tools introduced in Section~\ref{sec:iso}, we now return to the study of embedded corrector problems, focusing on the case of isotropic elasticity in dimension $d=3$. In some particular cases, we are in position to strengthen our previous concavity result (see Lemmas~\ref{lem:strict-concave-lambda} and~\ref{lem:strict-concave-mu} below). Under some additional assumptions, we are then able to show the existence of a self-consistent tensor (in a weaker sense than~\eqref{tensor4}) which again converges to the homogenized tensor (see Proposition~\ref{self-consistent} below). We stress here the fact that the adaptation of~\cite[Proposition~3.7]{PART1} in the thermal case to the current elasticity case requires specific and novel arguments.



\subsection{Strict concavity} \label{sec:strict}

Lemma~\ref{concave} states that, for any $\sigma \in \cS$ and any $\A\in L^\infty(B,\mathcal{M})$, the mapping $\mathcal{M} \ni A \rightarrow \mathcal{E}_\sigma^{\A}(\mathcal{A})$ is concave. In this section, in the particular case of isotropic elasticity in dimension $d=3$, we investigate some cases where this mapping can be proved to be {\em strictly} concave, or can be proved to not be strictly concave.

\medskip

In what follows, instead of working with the Lam\'e coefficients $\mu$ and $\lambda$, we are going to work with $\mu$ and $\kappa = 2\mu+3\lambda$, under the restriction (see~\eqref{restriction-lame}) that $\mu,\kappa > 0$. For any such $\mu$ and $\kappa$, we set
\begin{equation} \label{eq:changement_lame}
  A_{{\rm iso},2}^{\kappa,\mu} := A_{\rm iso}^{\lambda,\mu} \quad \text{with} \quad \lambda = \frac{\kappa - 2\mu}{3},
\end{equation}
where we recall that $A_{\rm iso}^{\lambda,\mu}$ is defined by~\eqref{eq:def_A-is-lambda-mu}. In view of~\eqref{eq:borne_I}, we have 
\begin{equation} \label{eq:borne_I_bis}
  \mathcal{I} = \left\{ A_{{\rm iso},2}^{\kappa,\mu}, \quad \alpha \leq \kappa \leq \beta, \quad \alpha \leq 2\mu \leq \beta \right\}.
\end{equation}
For $A = A_{{\rm iso},2}^{\kappa,\mu}$ and $A_0 = A_{{\rm iso},2}^{\kappa_0,\mu_0}$, the equation~\eqref{energy-isotropic_3} equivalently reads
\begin{equation}\label{energy-isotropic_3_kappa_mu}
  \frac{1}{3} \, \mathcal{E}^{A_0}_{\rm Id}(A)
  =
  \kappa_0 -\frac{\big( \kappa_0-\kappa \big)^2}{\kappa_0+4\mu}.
\end{equation}

\medskip

We begin by a negative result. 

\begin{lemma}
\label{lem:non-strict-concave}
Let $d=3$. Consider the set $\mathcal{I}$ defined by~\eqref{eq:borne_I_bis} and assume that $\beta \geq 5 \alpha/2$. Let $\kappa_0 = \alpha$ and let $\mu_0 \in (\alpha/2,\beta/2)$. Let $A_0 := A_{{\rm iso},2}^{\kappa_0,\mu_0} \in \mathcal{I}$ be the corresponding isotropic elasticity tensor. The mapping $\mathcal{I} \ni A \to \mathcal{E}^{A_0}_{\rm Id}(A)$ defined by~\eqref{eq:Energy} (for the specific case $\sigma = {\rm Id}$ and $\A = A_0$) is {\em not strictly concave}.
\end{lemma}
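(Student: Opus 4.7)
The plan is to exploit the explicit formula~\eqref{energy-isotropic_3_kappa_mu}. Writing the argument $A = A_{{\rm iso},2}^{\kappa,\mu}$, this formula reads
\begin{equation*}
\mathcal{E}^{A_0}_{\rm Id}(A) = 3\kappa_0 - \frac{3(\kappa_0-\kappa)^2}{\kappa_0 + 4\mu},
\end{equation*}
so on the affine slice $\{\kappa = \kappa_0\}$ of $\mathcal{I}$ the energy equals $3\kappa_0$ \emph{independently of $\mu$}. The strategy is therefore to exhibit a whole one-parameter family of distinct tensors in $\mathcal{I}$ along which $\mathcal{E}^{A_0}_{\rm Id}$ is constant; this immediately rules out strict concavity.

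Concretely, I would pick two distinct values $\mu_1 \neq \mu_2$ in $[\alpha/2,\beta/2]$ (the hypothesis $\beta \geq 5\alpha/2 > \alpha$ guarantees such a choice exists while simultaneously leaving room to place $\mu_0$ in the open interval $(\alpha/2,\beta/2)$) and set $A_i := A_{{\rm iso},2}^{\alpha,\mu_i}$ for $i=1,2$. By the characterization~\eqref{eq:borne_I_bis}, both $A_i$ belong to $\mathcal{I}$ and $A_1 \neq A_2$. Applying the displayed formula with $\kappa_0 = \alpha$ and $\kappa = \alpha$ yields $\mathcal{E}^{A_0}_{\rm Id}(A_1) = \mathcal{E}^{A_0}_{\rm Id}(A_2) = 3\alpha$.

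To conclude, I would invoke the fact that the map $(\kappa,\mu) \mapsto A_{{\rm iso},2}^{\kappa,\mu}$ is linear, as is clear from~\eqref{eq:changement_lame} together with the definition~\eqref{eq:def_A-is-lambda-mu} of $A_{\rm iso}^{\lambda,\mu}$. Hence, for any $\theta \in (0,1)$,
\begin{equation*}
\theta A_1 + (1-\theta)A_2 = A_{{\rm iso},2}^{\alpha,\, \theta\mu_1 + (1-\theta)\mu_2},
\end{equation*}
which still lies on the slice $\{\kappa = \alpha = \kappa_0\}$. Applying the formula once more gives $\mathcal{E}^{A_0}_{\rm Id}\bigl(\theta A_1 + (1-\theta)A_2\bigr) = 3\alpha = \theta\,\mathcal{E}^{A_0}_{\rm Id}(A_1) + (1-\theta)\,\mathcal{E}^{A_0}_{\rm Id}(A_2)$, saturating the concavity inequality of Lemma~\ref{concave} with distinct arguments and $\theta \in (0,1)$. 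The mapping $\mathcal{I} \ni A \mapsto \mathcal{E}^{A_0}_{\rm Id}(A)$ is therefore not strictly concave.

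There is no serious analytic obstacle here: the only conceptual point is to notice that the $\mu$-dependence in~\eqref{energy-isotropic_3_kappa_mu} disappears precisely when $\kappa$ is chosen equal to $\kappa_0$, so that $\mathcal{E}^{A_0}_{\rm Id}$ is constant on an entire one-dimensional sub-family of $\mathcal{I}$. Once this observation is made, the verification that the chosen tensors and their convex combinations belong to $\mathcal{I}$ (which is convex) is immediate, and the argument reduces to a direct substitution in the explicit formula.
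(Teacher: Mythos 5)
Your proof is correct. Both your argument and the paper's rest on the explicit formula~\eqref{energy-isotropic_3_kappa_mu} and consist in exhibiting a nontrivial segment of $\mathcal{I}$ along which $A \mapsto \mathcal{E}^{A_0}_{\rm Id}(A)$ is affine, but the segments are different. The paper varies $\kappa$ and $\mu$ simultaneously along a tuned line ($\kappa(t)$ ranging over $[(\alpha+\beta)/2,\beta]$, $\mu(t)$ over $[\alpha/2,5\alpha/4]$) so that $\kappa_0-\kappa(t)$ and $\kappa_0+4\mu(t)$ are both proportional to $1+t$ and the correction term becomes linear in $t$; this is exactly where the hypothesis $\beta \geq 5\alpha/2$ is used, to ensure $5\alpha/4 \leq \beta/2$. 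You instead freeze $\kappa = \kappa_0 = \alpha$, on which slice the term $(\kappa_0-\kappa)^2/(\kappa_0+4\mu)$ vanishes identically, so the energy is constant in $\mu$. Your choice is simpler (no computation beyond noting that the numerator vanishes) and in fact does not use the hypothesis $\beta \geq 5\alpha/2$ at all: two distinct admissible values of $\mu$ exist as soon as $\alpha < \beta$, so your argument proves the non-strict-concavity under weaker assumptions. The supporting points — linearity of $(\kappa,\mu)\mapsto A_{{\rm iso},2}^{\kappa,\mu}$ (hence the convex combination stays on the slice $\kappa=\alpha$), membership in $\mathcal{I}$ via~\eqref{eq:borne_I_bis}, and $A_1\neq A_2$ (which follows from injectivity of the parametrization, e.g. by testing against a trace-free $\sigma$) — all check out.
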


This result is in sharp contrast with the thermal case, where we are in position to use a {\em strict} concavity argument in the proof of~\cite[Lemma~3.3]{PART1}.

\begin{proof}
  %
  We have set $\kappa_0 = \alpha$ and we further set $\kappa = \beta$, $\overline{\kappa} = (\alpha+\beta)/2$ and $\kappa(t) = t \, \kappa + (1-t) \, \overline{\kappa}$. We do not make precise $\mu_0 \in (\alpha/2,\beta/2)$ (since $\mathcal{E}^{A_0}_{\rm Id}(A)$ does not depend on it) and choose $\mu = 5\alpha/4$ (we have assumed $5\alpha/4 \leq \beta/2$), $\overline{\mu} = \alpha/2$ and $\mu(t) = t \, \mu + (1-t) \, \overline{\mu}$. We consider the function $\dps f(t) = \frac{1}{3} \, \mathcal{E}^{A_0}_{\rm Id}\left(A_{{\rm iso},2}^{\kappa(t),\mu(t)}\right)$ for $t \in [0,1]$, and we underline that $A_0$ and $A_{{\rm iso},2}^{\kappa(t),\mu(t)}$ belong to $\mathcal{I}$ for any $t \in [0,1]$. Furthermore, $A_{{\rm iso},2}^{\kappa(t),\mu(t)} = t \, A_{{\rm iso},2}^{\kappa,\mu} + (1-t) \, A_{{\rm iso},2}^{\overline{\kappa},\overline{\mu}}$ is a convex linear combination of $A_{{\rm iso},2}^{\kappa,\mu} \in \mathcal{I}$ and $A_{{\rm iso},2}^{\overline{\kappa},\overline{\mu}} \in \mathcal{I}$. The macroscopic loading in the embedded corrector problem is $\sigma = {\rm Id}$, which is an isotropic matrix.

  We now show that $f$ is actually an affine function. Using~\eqref{energy-isotropic_3_kappa_mu}, we have $\dps f(t) = \kappa_0 -\frac{\big( \kappa_0-\kappa(t) \big)^2}{\kappa_0+4\mu(t)}$ with
  $$
  \kappa_0-\kappa(t) = \alpha - t \, \beta - (1-t) \frac{\alpha+\beta}{2} = \frac{\alpha-\beta}{2} \, (1+t) 
  $$
  and
  $$
  \kappa_0+4\mu(t) = \alpha + 5 \, t \, \alpha + 2 \, (1-t) \, \alpha = 3 \, \alpha \, (1+t),
  $$
  and thus
  $$
  f(t) = \alpha - \frac{(\alpha-\beta)^2}{12 \, \alpha} \, (1+t). 
  $$
  The function $f$ is thus indeed affine. 
\end{proof}

In Lemma~\ref{lem:non-strict-concave}, we have considered the function of two variables $(\kappa,\mu) \mapsto \mathcal{E}^{A_0}_{\rm Id}(A_{{\rm iso},2}^{\kappa,\mu})$. If we now restrict our attention to the variable $\kappa$, the corresponding map turns out to be strictly concave.

\begin{lemma}
\label{lem:strict-concave-lambda}
Let $d=3$. Let $\kappa_0>0$, $\mu_0>0$ and let $A_0 := A_{{\rm iso},2}^{\kappa_0,\mu_0}$ be the corresponding isotropic elasticity tensor. Fix $\mu >0$ and $0< \alpha_- < \beta^+ < +\infty$. Then the mapping $[\alpha_-,\beta^+] \ni \kappa \mapsto \mathcal{E}^{A_0}_{\rm Id}(A_{{\rm iso},2}^{\kappa,\mu})$ defined by~\eqref{eq:Energy} (for the specific case $\sigma = {\rm Id}$ and $\A = A_0$) is {\em strictly concave}.
\end{lemma}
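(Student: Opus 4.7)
The plan is to exploit the explicit formula \eqref{energy-isotropic_3_kappa_mu} which was derived from the solution of the Eshelby problem in Corollary~\ref{Elsheby_id}. Since the loading $\sigma = {\rm Id}$ is isotropic, the interior tensor $\A = A_0$ is isotropic, and the exterior tensor $A_{{\rm iso},2}^{\kappa,\mu}$ is isotropic, the formula is applicable in the form
$$
\frac{1}{3} \, \mathcal{E}^{A_0}_{\rm Id}\bigl(A_{{\rm iso},2}^{\kappa,\mu}\bigr)
= \kappa_0 \;-\; \frac{(\kappa_0-\kappa)^2}{\kappa_0 + 4\mu}.
$$
The hypotheses $\kappa_0>0$, $\mu>0$ and $\alpha_- > 0$ guarantee that the denominator $\kappa_0 + 4\mu$ is a fixed, strictly positive constant independent of $\kappa \in [\alpha_-,\beta^+]$, and that $A_{{\rm iso},2}^{\kappa,\mu}$ (viewed via~\eqref{eq:changement_lame}) defines a legitimate isotropic tensor for every $\kappa$ in this interval.

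Once the formula is in hand, the argument is immediate. Define $f(\kappa) := \mathcal{E}^{A_0}_{\rm Id}(A_{{\rm iso},2}^{\kappa,\mu})$; then, by the displayed identity,
$$
f(\kappa) = 3\kappa_0 - \frac{3(\kappa_0 - \kappa)^2}{\kappa_0 + 4\mu},
$$
which is a quadratic polynomial in $\kappa$ with leading coefficient $-3/(\kappa_0+4\mu) < 0$. Equivalently, $f''(\kappa) = -6/(\kappa_0+4\mu) < 0$ on $[\alpha_-,\beta^+]$, so $f$ is strictly concave on this interval.

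There is essentially no obstacle here: the heavy lifting was already performed in Section~\ref{sec:iso}, where the Eshelby problem was solved explicitly (Lemma~\ref{Elsheby} and Corollary~\ref{Elsheby_id}) and the corresponding energy was computed in closed form \eqref{energy-isotropic_3}--\eqref{energy-isotropic_3_kappa_mu}. The only thing to verify when writing up the proof is that the formula is being applied in the right regime (both the interior and exterior tensors isotropic, $\sigma = {\rm Id}$), which matches the setting of the lemma verbatim. The contrast with Lemma~\ref{lem:non-strict-concave} is that here we freeze $\mu$ and vary only $\kappa$, which precisely eliminates the pathological affine direction exhibited in the proof of that negative result.
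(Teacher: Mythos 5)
Your proof is correct and is essentially identical to the paper's: both simply read off from~\eqref{energy-isotropic_3_kappa_mu} that $\kappa \mapsto \mathcal{E}^{A_0}_{\rm Id}(A_{{\rm iso},2}^{\kappa,\mu})$ is a quadratic in $\kappa$ with negative leading coefficient, hence strictly concave. Your explicit computation $f''(\kappa) = -6/(\kappa_0+4\mu)$ also yields the uniform negativity of the second derivative on $[\alpha_-,\beta^+]$ that the paper records for later use.
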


\begin{proof}
  Using again~\eqref{energy-isotropic_3_kappa_mu}, we see that
  $$
  \frac{1}{3} \, \mathcal{E}^{A_0}_{\rm Id}(A_{{\rm iso},2}^{\kappa,\mu})
  =
  \kappa_0 -\frac{\big( \kappa_0-\kappa \big)^2}{\kappa_0+4\mu},
  $$
  which is obviously a strictly concave function of $\kappa$. In addition, the second derivative of this function with respect to $\kappa$ is (negative and) bounded away from 0 on the compact set $[\alpha_-,\beta^+]$.
\end{proof}

\begin{lemma}
\label{lem:strict-concave-mu}
Let $d=3$. Let $\kappa_0>0$, $\mu_0>0$ and let $A_0 := A_{{\rm iso},2}^{\kappa_0,\mu_0}$ be the corresponding isotropic elasticity tensor. Fix $\kappa>0$, $0< \alpha_- < \beta^+ < +\infty$ and consider some $1 \leq k,l \leq 3$ with $k \neq l$. Then the mapping $[\alpha_-/2,\beta^+/2] \ni \mu \to \mathcal{E}^{A_0}_{\sigma^{kl}}(A_{{\rm iso},2}^{\kappa,\mu})$ defined by~\eqref{eq:Energy} (for the specific case $\sigma = \sigma^{kl}$ and $\A = A_0$) is {\em strictly concave}.
\end{lemma}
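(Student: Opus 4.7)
The plan is to combine the characterization of the equality case in Lemma~\ref{concave} with the explicit Eshelby corrector given by Lemma~\ref{Elsheby}, thereby reducing strict concavity to the strict monotonicity of an explicit rational function of $\mu$. Set $A_\mu := A_{{\rm iso},2}^{\kappa,\mu}$ for brevity and define
$$
g(\mu) := \mathcal{E}^{A_0}_{\sigma^{kl}}(A_\mu), \qquad \mu \in [\alpha_-/2,\beta^+/2].
$$
The concavity of $g$ already follows from Lemma~\ref{concave}, once the bounds $\alpha, \beta$ defining $\mathcal{M}$ are taken, respectively, small enough and large enough so that $A_0$ and every $A_\mu$ under consideration belong to $\mathcal{M}$. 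I would argue by contradiction: since $\mu \mapsto A_\mu$ is affine when $\kappa$ is fixed, if $g$ were not strictly concave, there would exist $\mu_1 \neq \mu_2$ in the interval and some $\alpha \in (0,1)$ at which the concavity inequality becomes an equality. The equality case of Lemma~\ref{concave} would then force
$$
w_{\sigma^{kl}}^{A_0, A_{\mu_1}} = w_{\sigma^{kl}}^{A_0, A_{\mu_2}} \qquad \text{in } V_0.
$$

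The second step is to read off the interior behaviour of the correctors using Lemma~\ref{Elsheby}. Substituting $\lambda = (\kappa - 2\mu)/3$ in the expressions appearing in the denominator of~\eqref{Cij} gives the identities $14\mu+9\lambda = 8\mu+3\kappa$ and $8\mu + 3\lambda = 6\mu+\kappa$. For $k \neq l$, the formula~\eqref{Cij} then yields, for $x \in B$,
$$
w_{\sigma^{kl}}^{A_0, A_\mu}(x) = h(\mu) \, \sigma^{kl} x, \qquad h(\mu) := \frac{2(\mu-\mu_0)(6\mu+\kappa)}{P(\mu)},
$$
with $P(\mu) := 8\mu^2 + (12\mu_0 + 3\kappa)\mu + 2\mu_0\kappa$. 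Since $\sigma^{kl}$ is a nonzero symmetric matrix when $k \neq l$ (for instance, $\sigma^{kl} e_k = e_l/2$ is nonzero with $e_k/2 \in B$), the equality of the two correctors on $B$ enforces $h(\mu_1) = h(\mu_2)$.

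The last step is to show that $h$ is strictly monotonic on $(0,\infty)$, which contradicts $h(\mu_1) = h(\mu_2)$ with $\mu_1 \neq \mu_2$. Writing $h = N/P$ with $N(\mu) = 12\mu^2 + (2\kappa - 12\mu_0)\mu - 2\mu_0\kappa$, a direct expansion (in which the $\mu^3$ terms cancel) gives
$$
N'(\mu) P(\mu) - N(\mu) P'(\mu) = 10 \, \bigl[ 2(12\mu_0 + \kappa)\mu^2 + 8\mu_0 \kappa\,\mu + \mu_0 \kappa^2 \bigr].
$$
All three coefficients inside the brackets are strictly positive under the standing assumptions $\mu_0,\kappa > 0$, so $h'(\mu) > 0$ for every $\mu > 0$, as desired. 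The main (and only) obstacle is the polynomial bookkeeping in this last expansion: one must verify both that the cubic contribution to $N'P - NP'$ cancels and that the three remaining coefficients are positive. Everything else reduces to invoking Lemmas~\ref{concave} and~\ref{Elsheby}.
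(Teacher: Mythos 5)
Your proof is correct, and it takes a genuinely different route from the paper's. The paper works directly with the closed-form energy~\eqref{energy-isotropic_1}: after the change of variables $\lambda=(\kappa-2\mu)/3$ it writes $\mathcal{E}^{A_0}_{\sigma^{kl}}(A_{{\rm iso},2}^{\kappa,\mu})=\mu_0-f(\mu)$ with $f(\mu)=2(\mu-\mu_0)^2(6\mu+\kappa)/P(\mu)$, computes $f''$ explicitly, and shows that the numerator of $f''$, viewed as a quadratic polynomial in $\mu-\mu_0$, has negative discriminant and positive value at $\mu=\mu_0$, hence stays positive. You instead combine the equality case of the abstract concavity result (Lemma~\ref{concave}, which applies since $\mu\mapsto A_{{\rm iso},2}^{\kappa,\mu}$ is affine for fixed $\kappa$) with the explicit interior form $C^{kl}x=h(\mu)\,\sigma^{kl}x$ of the Eshelby corrector from Lemma~\ref{Elsheby}, reducing strict concavity to the injectivity of $h$, which you obtain from $h'>0$. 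I checked your algebra: the substitutions $14\mu+9\lambda=8\mu+3\kappa$ and $8\mu+3\lambda=6\mu+\kappa$ are right, the cubic terms in $N'P-NP'$ do cancel, and the remaining coefficients are $20(12\mu_0+\kappa)$, $80\mu_0\kappa$ and $10\mu_0\kappa^2$, all positive; your caveat about enlarging $[\alpha,\beta]$ so that all tensors involved lie in $\mathcal{M}$ is also the right thing to say. Your first-derivative computation is lighter than the paper's second-derivative-plus-discriminant analysis. The one thing your argument does not deliver is the quantitative addendum recorded at the end of the paper's proof, namely that $f''$ is bounded away from $0$ on the compact interval $[\alpha_-/2,\beta^+/2]$; this stronger fact is not part of the lemma's statement, but it is what the perturbative argument of Lemma~\ref{lem:perturb} relies on, so a purely qualitative proof of strict concavity would not suffice for that later purpose.
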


\begin{proof}
  Using~\eqref{energy-isotropic_1}, we see that $\dps \mathcal{E}^{A_0}_{\sigma^{kl}}(A_{{\rm iso},2}^{\kappa,\mu}) = \mu_0 - f(\mu)$ with
  $$
  f(\mu) = \frac{2(\mu-\mu_0)^2}{2\mu_0 + \frac{8\mu+3\kappa}{6\mu+\kappa} \, \mu} = \frac{2(\mu-\mu_0)^2 \, (6\mu+\kappa)}{2\mu_0 \, (6\mu + \kappa) + (8\mu+3\kappa) \, \mu},
  $$
  and we are thus left with showing the strict convexity of $f$. A lengthy but straighforward computation shows that
  \begin{equation} \label{eq:def_f_prime}
  f'(\mu) = \frac{4(\mu-\mu_0) \, (6\mu+\kappa) + 12(\mu-\mu_0)^2}{2\mu_0 \, (6\mu + \kappa) + (8\mu+3\kappa) \, \mu} - \frac{2(\mu-\mu_0)^2 \, (6\mu+\kappa)}{\big[ 2\mu_0 \, (6\mu + \kappa) + (8\mu+3\kappa) \, \mu \big]^2} \, (12\mu_0 + 16\mu + 3\kappa),
  \end{equation}
  and that
  \begin{equation} \label{eq:def_f_der2}
  f''(\mu) = \frac{F(\mu)}{\big[ 2\mu_0 \, (6\mu + \kappa) + (8\mu+3\kappa) \, \mu \big]^3},
  \end{equation}
  with
  \begin{align*}
    F(\mu)
    &=
    \big[ 4(6\mu+\kappa) + 48(\mu-\mu_0) \big] \big[ 2\mu_0 \, (6\mu + \kappa) + (8\mu+3\kappa) \, \mu \big]^2
    \\
    & - \big[ 4(\mu-\mu_0) \, (6\mu+\kappa) + 12(\mu-\mu_0)^2 \big] \big[ 2\mu_0 \, (6\mu + \kappa) + (8\mu+3\kappa) \, \mu \big] (12\mu_0 + 16\mu + 3\kappa)
    \\
    & - \big[ 4(\mu-\mu_0) \, (6\mu+\kappa) + 12(\mu-\mu_0)^2 \big] \big[ 2\mu_0 \, (6\mu + \kappa) + (8\mu+3\kappa) \, \mu \big] (12\mu_0 + 16\mu + 3\kappa)
    \\
    & - 32(\mu-\mu_0)^2 \, (6\mu+\kappa) \big[ 2\mu_0 \, (6\mu + \kappa) + (8\mu+3\kappa) \, \mu \big]
    \\
    & + 4(\mu-\mu_0)^2 \, (6\mu+\kappa) \, (12\mu_0 + 16\mu + 3\kappa)^2,
  \end{align*}
  where the first two lines correspond to the derivative of the first term of~\eqref{eq:def_f_prime} and the last three lines correspond to the derivative of the second term of~\eqref{eq:def_f_prime}. Observing that the second and third lines of the above expression are identical, and considering $F(\mu)$ as a polynomial function in $\mu-\mu_0$, we get
  \begin{equation} \label{eq:F_trinome}
  F(\mu) = c_2 \, (\mu-\mu_0)^2 + c_1 \, (\mu-\mu_0) + c_0,
  \end{equation}
  where the coefficients $c_0$, $c_1$ and $c_2$ (which still depend on $\mu$) are given by 
  \begin{multline*}
    c_2
    =
    -24 \big[ 2\mu_0 \, (6\mu + \kappa) + (8\mu+3\kappa) \, \mu \big] (12\mu_0 + 16\mu + 3\kappa)
    \\ - 32 (6\mu+\kappa) \big[ 2\mu_0 \, (6\mu + \kappa) + (8\mu+3\kappa) \, \mu \big]
    + 4 (6\mu+\kappa) \, (12\mu_0 + 16\mu + 3\kappa)^2,
  \end{multline*}
  and
  \begin{align*}
    c_1
    &=
    48 \big[ 2\mu_0 \, (6\mu + \kappa) + (8\mu+3\kappa) \, \mu \big]^2
    - 8 (6\mu+\kappa) \big[ 2\mu_0 \, (6\mu + \kappa) + (8\mu+3\kappa) \, \mu \big] (12\mu_0 + 16\mu + 3\kappa),
    \\
    c_0
    &=
    4(6\mu+\kappa) \big[ 2\mu_0 \, (6\mu + \kappa) + (8\mu+3\kappa) \, \mu \big]^2.
  \end{align*}
  Introducing the notation
  $$
  \gamma_0
  =
  2\mu_0 \, (6\mu + \kappa) + (8\mu+3\kappa) \, \mu,
  \qquad
  \gamma_1
  =
  12\mu_0 + 16\mu + 3\kappa,
  \qquad
  \gamma_2
  =
  6\mu + \kappa,
  $$
  we get
  $$
  c_2
  =
  4 \big[ -6 \, \gamma_0 \, \gamma_1 - 8 \, \gamma_0 \, \gamma_2 + \gamma_1^2 \, \gamma_2 \big],
  \qquad
  c_1
  =
  48 \, \gamma_0^2 - 8 \, \gamma_0 \, \gamma_1 \, \gamma_2,
  \qquad
  c_0
  =
  4 \, \gamma_0^2 \, \gamma_2.
  $$
  To show that $f$ is strictly convex, we are left with showing that $F$ remains positive. We compute the discriminant $\Delta = c_1^2 - 4 \, c_2 \, c_0$ of the polynomial~\eqref{eq:F_trinome} of degree two:
  \begin{align}
    - \Delta
    &=
    4 \, c_2 \, c_0 - c_1^2
    \nonumber
    \\
    &=
    64 \, \gamma_0^2 \, \gamma_2 \big[ -6 \, \gamma_0 \, \gamma_1 - 8 \, \gamma_0 \, \gamma_2 + \gamma_1^2 \, \gamma_2 \big] - 64 \, \gamma_0^2 \big[ 6 \, \gamma_0 - \gamma_1 \, \gamma_2 \big]^2
    \nonumber
    \\
    &=
    64 \, \gamma_0^2 \big[ -6 \, \gamma_0 \, \gamma_1 \, \gamma_2 - 8 \, \gamma_0 \, \gamma_2^2 + \gamma_1^2 \, \gamma_2^2 - (6 \, \gamma_0 - \gamma_1 \, \gamma_2)^2 \big]
    \nonumber
    \\
    &=
    64 \, \gamma_0^2 \big[ - 36 \, \gamma_0^2 + 6 \, \gamma_0 \, \gamma_1 \, \gamma_2 - 8 \, \gamma_0 \, \gamma_2^2 \big]
    \nonumber
    \\
    &=
    128 \, \gamma_0^3 \big[ - 18 \, \gamma_0 + 3 \, \gamma_1 \, \gamma_2 - 4 \, \gamma_2^2 \big].
    \label{eq:discri}
  \end{align}
  Using the expression of $\gamma_0$, $\gamma_1$ and $\gamma_2$, we observe that
  \begin{align*}
    - 18 \, \gamma_0 + 3 \, \gamma_1 \, \gamma_2 - 4 \, \gamma_2^2
    &=
    3 \, (12\mu_0 + 16\mu + 3\kappa) \, (6\mu + \kappa) - 4 \, (6\mu + \kappa)^2 - 18 \, \big( 2\mu_0 \, (6\mu + \kappa) + (8\mu+3\kappa) \, \mu \big)
    \\
    &=
    3 \, (16\mu + 3\kappa) \, (6\mu + \kappa) - 4 \, (6\mu + \kappa)^2 - 18 \, (8\mu+3\kappa) \, \mu
    \\
    &=
    5 \, \kappa^2.
  \end{align*}
  Inserting this result in~\eqref{eq:discri}, we obtain $- \Delta = 640 \, \gamma_0^3 \, \kappa^2 > 0$. In addition, $F(\mu=\mu_0) = c_0 > 0$.

  We thus obtain that the polynomial function~\eqref{eq:F_trinome}, which reads $\dps F(\mu) = c_2 \left[ \left( \mu-\mu_0 + \frac{c_1}{2 \, c_2} \right)^2 - \frac{\Delta}{4 c_2^2} \right]$, remains of constant sign (since $\Delta < 0$) and actually remains positive (since $F(\mu_0) > 0$). In view of~\eqref{eq:def_f_der2}, we deduce that the function $f$ is strictly convex with respect to $\mu$.

  In addition, on the compact set $[\alpha_-/2,\beta^+/2]$, the continuous function $f''$ is (positive and) bounded away from 0. This concludes the proof of Lemma~\ref{lem:strict-concave-mu}.
\end{proof}

In what follows, we make the following assumption: 

\begin{ass} \label{ass:fondamental}
  Let $d=3$ and consider some $\A \in L^\infty(B,\mathcal{I})$. We assume that $\A$ is such that there exists some $\alpha_-$ and $\beta^+$ with $0< \alpha_- < \alpha$ and $\beta < \beta^+ < +\infty$ such that
  \begin{itemize}
  \item for any given $\mu \in [\alpha_-/2,\beta^+/2]$, the mapping $[\alpha_-,\beta^+] \ni \kappa \to \mathcal{E}^{\A}_{\rm Id}(A_{{\rm iso},2}^{\kappa,\mu})$ is strictly concave.
  \item for any given $\kappa \in [\alpha_-,\beta^+]$, the mapping $\dps [\alpha_-/2,\beta^+/2] \ni \mu \to \sum_{1 \leq k < l \leq 3} \mathcal{E}^{\A}_{\sigma^{kl}}(A_{{\rm iso},2}^{\kappa,\mu})$ is strictly concave.
  \end{itemize}
\end{ass}

In view of Lemmas~\ref{lem:strict-concave-lambda} and~\ref{lem:strict-concave-mu}, this assumption holds (for any $\alpha_-$ and $\beta^+$ such that $0< \alpha_- < \alpha$ and $\beta < \beta^+ < +\infty$) when $\A$ is a uniform isotropic field (i.e. $\A(x) = A_{{\rm iso},2}^{\kappa_0,\mu_0} \in \mathcal{I}$ for any $x \in B$). By a perturbative argument, it also holds when $\A$ is close (in the $L^\infty(B)$ norm) of such a field. An example falling within that situation is when
\begin{equation} \label{eq:perturb}
\forall x \in B, \quad \A(x) = A_{{\rm iso},2}^{\kappa_0,\mu_0} + \eta \, \C(x),
\end{equation}
for some $\C \in L^\infty(B,\mathcal{I})$, with $\eta$ sufficiently small. We indeed have the following result.


\begin{lemma} \label{lem:perturb}
Let $d=3$ and suppose that $\A$ is of the form~\eqref{eq:perturb} for some $\kappa_0 \in [\alpha,\beta]$, some $\mu_0 \in [\alpha/2,\beta/2]$ and some $\C \in L^\infty(B,\mathcal{I})$. Then,  for any $\alpha_-$ and $\beta^+$ such that $0< \alpha_- < \alpha$ and $\beta < \beta^+ < +\infty$, the field $\A$ satisfies Assumption~\ref{ass:fondamental} when $\eta$ is sufficiently small.
\end{lemma}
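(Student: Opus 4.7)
The plan is a perturbation argument grounded in Lemmas~\ref{lem:strict-concave-lambda} and~\ref{lem:strict-concave-mu}, which already provide quantitative strict concavity for constant isotropic $\A$. First I would treat the unperturbed case $\eta = 0$: the field $\A$ reduces to the constant tensor $A_0 := A_{{\rm iso},2}^{\kappa_0,\mu_0}$, and formula~\eqref{energy-isotropic_3_kappa_mu} gives the explicit value $\partial_\kappa^2 \mathcal{E}^{A_0}_{\rm Id}(A_{{\rm iso},2}^{\kappa,\mu}) = -6/(\kappa_0 + 4\mu)$, which lies below some $-c_\kappa < 0$ uniformly on the compact rectangle $K := [\alpha_-, \beta^+] \times [\alpha_-/2, \beta^+/2]$. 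Summing over the three pairs $(k,l)$ with $k<l$ the strictly convex functions $f$ whose second derivative $f''$ is computed (and is a continuous function of $(\kappa,\mu)$) in the proof of Lemma~\ref{lem:strict-concave-mu} yields, in the same way, a uniform upper bound $-c_\mu < 0$ for the $\mu$-second derivative of $\mu \mapsto \sum_{k<l} \mathcal{E}^{A_0}_{\sigma^{kl}}(A_{{\rm iso},2}^{\kappa,\mu})$ on $K$.

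Second, I would establish joint $C^2$ regularity of the energy in $(\A, \kappa, \mu)$. Since the forms $a^{\A,A}$ and $b_\sigma^{\A,A}$ defined in~\eqref{eq:linearforms} are affine in $A$, implicit differentiation of the variational equation~\eqref{variation_1} produces well-posed problems in $V_0$ for $\partial_\kappa w$, $\partial_\mu w$ and their second-order counterparts, yielding smoothness of $(\kappa,\mu) \mapsto \mathcal{E}^\A_\sigma(A_{{\rm iso},2}^{\kappa,\mu})$ for each admissible $\A$. Continuity with respect to $\A$ follows from standard energy estimates: writing the equation satisfied by the difference $w_\sigma^{\A,A} - w_\sigma^{\A',A}$ and using Korn's and Poincar\'e--Wirtinger inequalities in $V_0$ (Lemmas~\ref{Korn4} and~\ref{poincare2}) yields a Lipschitz bound $\|w_\sigma^{\A,A} - w_\sigma^{\A',A}\|_{V_0} \leq C\|\A - \A'\|_{L^\infty(B)}$, and analogous bounds propagate to the $(\kappa,\mu)$-derivatives. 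Consequently, for all $j+k \leq 2$, the map $\A \mapsto \partial_\kappa^j \partial_\mu^k \mathcal{E}^\A_\sigma(A_{{\rm iso},2}^{\kappa,\mu})$ is continuous from $L^\infty(B,\cM)$ into $C(K)$.

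To conclude, since $\|\A - A_0\|_{L^\infty(B)} = O(\eta)$, I would pick $\eta_0 > 0$ small enough so that, for $|\eta| \leq \eta_0$: (i) $\A$ takes pointwise isotropic values with Lam\'e-type parameters strictly contained in $(\alpha_-,\beta^+) \times (\alpha_-/2,\beta^+/2)$, making the embedded problems uniformly well-posed with constants $\alpha_-, \beta^+$; and (ii) by the continuity established above, the two relevant second derivatives remain bounded from above by $-c_\kappa/2$ and $-c_\mu/2$ on $K$. Strict concavity of both maps of Assumption~\ref{ass:fondamental} then follows. The hard part will be the continuity step: transferring the perturbation estimates through the differentiated corrector problems on the unbounded domain $\R^d$ is delicate, but the cut-off technique of Section~\ref{sec:proexis} together with the Korn inequality in $V_0$ supplies exactly the uniform controls required.
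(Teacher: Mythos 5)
Your proposal is correct and follows essentially the same route as the paper's proof: quantitative (uniformly negative) second derivatives in the constant case from Lemmas~\ref{lem:strict-concave-lambda} and~\ref{lem:strict-concave-mu}, differentiation of the embedded corrector problem in $\kappa$ and $\mu$ to get well-posed problems for the derivatives of $w$ in $V_0$, continuity of the resulting second derivatives of the energy with respect to $\A$ in the $L^\infty(B)$ norm, and a smallness argument in $\eta$. The only cosmetic difference is that you make the Lipschitz estimate on $w_\sigma^{\A,A}-w_\sigma^{\A',A}$ explicit, whereas the paper simply asserts the continuity in $\A$.
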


\begin{proof}
Let $0< \alpha_- < \alpha$ and $\beta < \beta^+ < +\infty$. Fix $\sigma \in \mathcal{S}$ and consider the embedded corrector problem~\eqref{p-corrector} with the interior elasticity field $\A$ and with the exterior elasticity field $A_{{\rm iso},2}^{\kappa,\mu}$, that we denote here $A^{\kappa,\mu}$ to simplify the notation. We denote $w_\sigma^{\A,\kappa,\mu}$ the solution to~\eqref{p-corrector}:
\begin{equation}\label{p-corrector_iso}
- {\rm div} \left[ \chi_B \, \A \left(\sigma + e\left(w_\sigma^{\A,\kappa,\mu}\right) \right) + \big(1-\chi_B\big) \, A^{\kappa,\mu} \left(\sigma + e\left(w_\sigma^{\A,\kappa,\mu}\right) \right) \right] = 0 \quad \text{in $\mathcal{D}'(\R^d)$},
\end{equation}
where we recall that $\chi_B$ is the characteristic function of the unit ball $B$. 

For any fixed $(\kappa,\mu) \in [\alpha_-,\beta^+] \times [\alpha_-/2,\beta^+/2]$, we have that $A^{\kappa,\mu} \in {\cal M}$, and the solution $w_\sigma^{\A,\kappa,\mu} \in V_0$ is continuous with respect to $\A \in {\cal M}$ in the $L^\infty(B)$ norm.

We now differentiate~\eqref{p-corrector_iso} with respect to $\kappa$ (the same arguments carry over with respect to $\mu$): denoting $w_{\sigma,1}^{\A,\kappa,\mu} = \partial_\kappa w_\sigma^{\A,\kappa,\mu}$, we have
\begin{equation}\label{p-corrector_iso_1}
- {\rm div} \left[ \chi_B \, \A \, e\left(w_{\sigma,1}^{\A,\kappa,\mu}\right) + \big(1-\chi_B\big) \, A^{\kappa,\mu} \, e\left(w_{\sigma,1}^{\A,\kappa,\mu}\right) \right] = {\rm div} \left[ \big(1-\chi_B\big) \, (\partial_\kappa A^{\kappa,\mu}) \left(\sigma + e\left(w_\sigma^{\A,\kappa,\mu}\right) \right) \right] \ \ \text{in $\mathcal{D}'(\R^d)$},
\end{equation}
with $\dps (\partial_\kappa A^{\kappa,\mu}) \tau = \frac{1}{3} \, ({\rm Tr} (\tau)) \, {\rm Id}$ for any $\tau \in \mathcal{S}$.

We again observe that, for any fixed $(\kappa,\mu) \in [\alpha_-,\beta^+] \times [\alpha_-/2,\beta^+/2]$, the solution $w_{\sigma,1}^{\A,\kappa,\mu} \in V_0$ is continuous with respect to $\A \in {\cal M}$ in the $L^\infty(B)$ norm.

We differentiate~\eqref{p-corrector_iso_1} with respect to $\kappa$: denoting $w_{\sigma,2}^{\A,\kappa,\mu} = \partial_\kappa w_{\sigma,1}^{\A,\kappa,\mu}$, we have
$$
- {\rm div} \left[ \chi_B \, \A \, e\left(w_{\sigma,2}^{\A,\kappa,\mu}\right) + \big(1-\chi_B\big) \, A^{\kappa,\mu} \, e\left(w_{\sigma,2}^{\A,\kappa,\mu}\right) \right] = 2 \, {\rm div} \left[ \big(1-\chi_B\big) \, (\partial_\kappa A^{\kappa,\mu}) \, e\left(w_{\sigma,1}^{\A,\kappa,\mu}\right) \right]\quad \text{in $\mathcal{D}'(\R^d)$},
$$
and we again have that, for any fixed $(\kappa,\mu) \in [\alpha_-,\beta^+] \times [\alpha_-/2,\beta^+/2]$, the solution $w_{\sigma,2}^{\A,\kappa,\mu} \in V_0$ is continuous with respect to $\A \in {\cal M}$ in the $L^\infty(B)$ norm.

We now compute ${\cal F}(\A,\kappa) := \mathcal{E}_\sigma^{\A}(A^{\kappa,\mu})$ using~\eqref{elseE_2}, for a fixed $\mu \in [\alpha_-/2,\beta^+/2]$. Differentiating the expression with respect to $\kappa$, we obtain
$$
|B| \, \frac{\partial {\cal F}}{\partial \kappa}(\A,\kappa)
=
\int_B \sigma \cdot \A \, e\left(w_{\sigma,1}^{\A,\kappa,\mu}\right) - \int_{\mathbb{S}} w_{\sigma,1}^{\A,\kappa,\mu} \cdot ((A^{\kappa,\mu}\sigma) \, n) - \int_{\mathbb{S}} w_\sigma^{\A,\kappa,\mu} \cdot ((\partial_\kappa A^{\kappa,\mu}\sigma) \, n)
$$
and$$
|B| \, \frac{\partial^2 {\cal F}}{\partial \kappa^2}(\A,\kappa)
=
\int_B \sigma \cdot \A \, e\left(w_{\sigma,2}^{\A,\kappa,\mu}\right) - \int_{\mathbb{S}} w_{\sigma,2}^{\A,\kappa,\mu} \cdot ((A^{\kappa,\mu}\sigma) \, n) - 2 \int_{\mathbb{S}} w_{\sigma,1}^{\A,\kappa,\mu} \cdot ((\partial_\kappa A^{\kappa,\mu}\sigma) \, n).
$$
The continuities established above imply that, for any fixed $\kappa \in [\alpha_-,\beta^+]$, the quantity $\dps \frac{\partial^2 {\cal F}}{\partial \kappa^2}(\A,\kappa)$ is a function continuous with respect to $\A \in {\cal M}$ in the $L^\infty(B)$ norm.

We have shown in Lemma~\ref{lem:strict-concave-lambda} that, for $\sigma = {\rm Id}$ and $\A = A_{{\rm iso},2}^{\kappa_0,\mu_0}$, the quantity $\dps \frac{\partial^2 {\cal F}}{\partial \kappa^2}(\A,\kappa)$ is negative and bounded away from 0. As a consequence, for $\A$ of the form~\eqref{eq:perturb} for a sufficiently small parameter $\eta$, we again have that $\dps \frac{\partial^2 {\cal F}}{\partial \kappa^2}(\A,\kappa)$ is negative, which means that the mapping $[\alpha_-,\beta^+] \ni \kappa \to \mathcal{E}^{\A}_{\rm Id}(A_{{\rm iso},2}^{\kappa,\mu})$ is strictly concave. This concludes the proof of Lemma~\ref{lem:perturb}.
\end{proof}

\subsection{Self-consistent tensor in isotropic elasticity}

In general, the existence of a fourth-order tensor satisfying the self-consistent equation~\eqref{tensor4} is an open question. However, for isotropic materials, we have the following positive result.

\begin{proposition} \label{self-consistent}
Let $d=3$ and consider a sequence $(\A^N)_{N \in \N} \subset L^\infty(B,\mathcal{I})$. Since $\A^N(x)$ is isotropic, it satisfies
$$
\forall x \in B, \quad \forall \sigma \in \cS, \quad \A^N(x) \sigma = 2\mu^N(x) \, \sigma + \lambda^N(x) \, ({\rm Tr} \, \sigma) \, {\rm Id}
$$
where $\mu^N$ and $\lambda^N$ are two functions in $L^\infty(B)$. Assume in addition that the sequence $(\A^N)_{N\in\N}$ G-converges to a constant isotropic elasticity tensor $A^\star\in \mathcal{I}$, the Lam\'e coefficients of which are denoted by $\lambda^\star$ and $\mu^\star$. We also assume that, for any $N \in \N$, the field $\A^N$ satisfies Assumption~\ref{ass:fondamental}. 

Then, for all $N \in \N$, there exists $A_4^N \in \mathcal{I}$, with Lam\'e coefficients $\lambda_4^N$ and $\mu_4^N$, such that
\begin{equation}\label{eq:mu}
\mu_4^N = \frac{1}{3} \sum_{1 \leq i <  j \leq 3} \mathcal{E}_{\sigma^{ij}}^{\A^N}(A_4^N) \quad \text{and} \quad 2\mu_4^N + 3\lambda_4^N = \frac{1}{3} \mathcal{E}_{\rm Id}^{\A^N}(A_4^N).
\end{equation}
In addition, we have
\begin{equation}\label{converge4}
\mu_4^N \mathop{\longrightarrow}_{N\to +\infty} \mu^\star \quad \mbox{and} \quad \lambda_4^N \mathop{\longrightarrow}_{N\to +\infty} \lambda^\star.
\end{equation}
\end{proposition}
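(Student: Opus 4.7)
The plan is to prove Proposition~\ref{self-consistent} in two stages. First, I will establish the \emph{existence} of the self-consistent tensor $A_4^N$ via Brouwer's fixed-point theorem, the strict concavity of Assumption~\ref{ass:fondamental} being the essential ingredient to pin down a unique admissible fixed point. Second, the \emph{convergence} $(\mu_4^N,\lambda_4^N)\to(\mu^\star,\lambda^\star)$ will be obtained by passing to the limit and adapting the sandwich argument used for Approximation~4 in the proof of Proposition~\ref{prop:convergence}, this time exploiting the isotropy of $A^\star$ and of the limit points of $(A_4^N)_N$.

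For \textbf{existence}, I would seek $A_4^N$ as a fixed point of the continuous map
\begin{equation*}
\Phi^N(\kappa,\mu) := \Bigl(\tfrac{1}{3}\mathcal{E}^{\A^N}_{\rm Id}(A_{{\rm iso},2}^{\kappa,\mu}),\ \tfrac{1}{3}\sum_{1\le k<l\le 3}\mathcal{E}^{\A^N}_{\sigma^{kl}}(A_{{\rm iso},2}^{\kappa,\mu})\Bigr)
\end{equation*}
defined on the compact rectangle $K := [\alpha_-,\beta^+] \times [\alpha_-/2,\beta^+/2]$ provided by Assumption~\ref{ass:fondamental}, whose continuity in $(\kappa,\mu)$ follows from Lemma~\ref{converge-in-out}. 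A fixed point in $[\alpha,\beta]\times[\alpha/2,\beta/2]$ gives the Lam\'e parameters of $A_4^N\in\mathcal{I}$. Testing $v=0$ in~\eqref{eq:E} produces the upper bound $\mathcal{E}^{\A^N}_\sigma(A)\le \tfrac{1}{|B|}\int_B\sigma\cdot\A^N\sigma$, and a companion lower bound follows by rewriting $|B|\,\mathcal{E}^{\A^N,A}_\sigma(v) = \int_B\sigma\cdot\A^N\sigma + 2\int_B (\A^N-A)\sigma\cdot e(v) + \int_{\R^d}e(v)\cdot\mathcal{A}^{\A^N,A}e(v)$ (after integrating the boundary term by parts over $\mathbb{S}$) and minimizing using Korn's inequality (Lemma~\ref{Korn4}) and the coercivity of $\mathcal{A}^{\A^N,A}$, which yields $\mathcal{E}^{\A^N}_\sigma(A)\ge \tfrac{1}{|B|}\int_B\sigma\cdot\A^N\sigma - \tfrac{1}{\alpha|B|}\|(\A^N-A)\sigma\|^2_{L^2(B)}$. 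Combined with the strict concavity of Assumption~\ref{ass:fondamental}, which rules out spurious zeros of $\kappa\mapsto\tfrac{1}{3}\mathcal{E}^{\A^N}_{\rm Id}(A_{{\rm iso},2}^{\kappa,\mu})-\kappa$ and of $\mu\mapsto\tfrac{1}{3}\sum_{k<l}\mathcal{E}^{\A^N}_{\sigma^{kl}}(A_{{\rm iso},2}^{\kappa,\mu})-\mu$, these bounds should yield continuous single-valued implicit functions $\mu=M^N(\kappa)$ and $\kappa=K^N(\mu)$, after which Brouwer applied to $\kappa\mapsto K^N(M^N(\kappa))$ delivers the fixed point. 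The main technical obstacle is sharpening the lower energy bound enough to secure the boundary sign conditions that enable the implicit-function construction, which may force $\alpha_-$ and $\beta^+$ in Assumption~\ref{ass:fondamental} to be taken close to $\alpha$ and $\beta$.

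For \textbf{convergence}, up to extraction $A_4^N\to A_4^\infty\in\mathcal{I}$ with Lam\'e parameters $(\kappa_4^\infty,\mu_4^\infty,\lambda_4^\infty)$, and Lemma~\ref{converge-in-out} lets me pass to the limit in~\eqref{eq:mu} to obtain
\begin{equation*}
\mu_4^\infty = \tfrac{1}{3}\sum_{k<l}\mathcal{E}^{A^\star}_{\sigma^{kl}}(A_4^\infty), \qquad \kappa_4^\infty = \tfrac{1}{3}\mathcal{E}^{A^\star}_{\rm Id}(A_4^\infty).
\end{equation*}
Formula~\eqref{elseE_1} gives $\mathcal{E}^{A^\star}_\sigma(A_4^\infty)\le\sigma\cdot A^\star\sigma$ for every $\sigma$ (the two subtracted terms being non-negative), so specializing to $\sigma^{kl}$ and to $\rm Id$ yields $\mu_4^\infty\le\mu^\star$ and $\kappa_4^\infty\le\kappa^\star$, and therefore $A^\star-A_4^\infty$ is positive semi-definite on $\mathcal{S}$ (decomposing any $\sigma\in\mathcal{S}$ into its traceless and spherical parts). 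By the isotropy of both $A^\star$ and $A_4^\infty$ and the $O(3)$-equivariance of the Eshelby problem, the three energies $\mathcal{E}^{A^\star}_{\sigma^{kl}}(A_4^\infty)$ ($k<l$) coincide, so by the limiting self-consistent equation each equals $\mu_4^\infty=\sigma^{kl}\cdot A_4^\infty\sigma^{kl}$, and likewise $\mathcal{E}^{A^\star}_{\rm Id}(A_4^\infty)=3\kappa_4^\infty={\rm Id}\cdot A_4^\infty\,{\rm Id}$. For each $\sigma\in\{\sigma^{12},\sigma^{13},\sigma^{23},{\rm Id}\}$ the sandwich
\begin{equation*}
\sigma\cdot A_4^\infty\sigma = \mathcal{E}^{A^\star,A_4^\infty}_\sigma\bigl(w^{A^\star,A_4^\infty}_\sigma\bigr) \ge \mathcal{E}^{A_4^\infty,A_4^\infty}_\sigma\bigl(w^{A^\star,A_4^\infty}_\sigma\bigr) \ge \mathcal{E}^{A_4^\infty,A_4^\infty}_\sigma(0) = \sigma\cdot A_4^\infty\sigma
\end{equation*}
(whose first inequality uses $A^\star\ge A_4^\infty$ and whose second uses that $w^{A_4^\infty,A_4^\infty}_\sigma=0$ is the unique minimizer of $v\mapsto\mathcal{E}^{A_4^\infty,A_4^\infty}_\sigma(v)$, by Lemma~\ref{same-tensor} together with the strict convexity stemming from Proposition~\ref{prop:existence}) collapses to equalities, forcing $w^{A^\star,A_4^\infty}_\sigma=0$. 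Substituting $w=0$ into the Eshelby equation~\eqref{p-corrector} for $\mathcal{A}^{A^\star,A_4^\infty}$ and reading the jump of the normal stress across $\mathbb{S}$ gives $(A^\star-A_4^\infty)\sigma\cdot n=0$ for every $n\in\mathbb{S}$, hence $(A^\star-A_4^\infty)\sigma=0$; applied to $\sigma=\sigma^{12}$ this forces $\mu_4^\infty=\mu^\star$, and applied to $\sigma={\rm Id}$ it forces $\kappa_4^\infty=\kappa^\star$, so $\lambda_4^\infty=\lambda^\star$ as well. Since every convergent subsequence has the same limit, the entire sequence converges and~\eqref{converge4} follows.
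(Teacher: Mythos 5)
Your existence argument has a genuine gap, and it is precisely the one you flag yourself: the boundary sign conditions. Your upper bound $\mathcal{E}^{\A^N}_\sigma(A)\le\frac{1}{|B|}\int_B\sigma\cdot\A^N\sigma$ does give $F_{\A^N}(\mu,\kappa):=\frac13\sum_{k<l}\mathcal{E}^{\A^N}_{\sigma^{kl}}(A^{\kappa,\mu}_{{\rm iso},2})-\mu\le\beta/2-\mu<0$ at the top of the range, but your lower bound carries the additive error $\frac{1}{\alpha|B|}\|(\A^N-A)\sigma\|^2_{L^2(B)}$, which measures the heterogeneity of $\A^N$ and does not become small as $\mu\to\alpha/2$ or as $\alpha_-\to\alpha$; it can be of order $(\beta-\alpha)^2/\alpha$, so it cannot deliver $F_{\A^N}\ge 0$ at the lower end of the interval. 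Shrinking $[\alpha_-,\beta^+]$ toward $[\alpha,\beta]$ therefore does not repair the argument. The paper closes exactly this hole differently: it sandwiches $\A^N$ pointwise between the \emph{constant} isotropic tensors $\underline{A}=A_{{\rm iso},2}^{\alpha,\alpha/2}$ and $\overline{A}=A_{{\rm iso},2}^{\beta,\beta/2}$, which by monotonicity of the energy in the interior tensor gives $F_{\underline{A}}\le F_{\A^N}\le F_{\overline{A}}$, and then evaluates $F_{\underline{A}}$, $F_{\overline{A}}$ (and the analogous $G$'s) \emph{exactly} via the Eshelby solution (Lemma~\ref{Elsheby} and formulas~\eqref{energy-isotropic_1}, \eqref{energy-isotropic_3}), obtaining e.g. $F_{\underline{A}}(\mu,\kappa)=(\alpha/2-\mu)\bigl(1-\frac{2(\alpha/2-\mu)}{\alpha+\frac{8\mu+3\kappa}{6\mu+\kappa}\mu}\bigr)$, whose second factor is manifestly positive, so that $F_{\A^N}>0$ for $\mu<\alpha/2$ and $F_{\A^N}<0$ for $\mu>\beta/2$. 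This explicit computation is the missing ingredient; once you have it, your implicit-function-plus-Brouwer scheme (identical in spirit to the paper's map $H(\mu,\kappa)=(\widehat\mu(\kappa),\widehat\kappa(\mu))$, with uniqueness of the zeros coming from Assumption~\ref{ass:fondamental}) goes through.

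Your convergence step, by contrast, is correct and takes a genuinely different route from the paper's. After passing to the limit in~\eqref{eq:mu} via Lemma~\ref{converge-in-out}, the paper substitutes the closed-form expressions~\eqref{energy-isotropic_1} and~\eqref{energy-isotropic_3} into the limiting equations and concludes by a monotonicity (or discriminant) argument on the resulting scalar identities. You instead avoid the explicit formulas entirely: you use isotropy to upgrade the two scalar relations into $A^\star\ge A_4^\infty$ as quadratic forms and into $\mathcal{E}^{A^\star}_\sigma(A_4^\infty)=\sigma\cdot A_4^\infty\sigma$ for $\sigma\in\{\sigma^{12},\sigma^{13},\sigma^{23},{\rm Id}\}$, then run the sandwich/rigidity argument of Approximation~4 in Proposition~\ref{prop:convergence} to force $w^{A^\star,A_4^\infty}_\sigma=0$, and finally read $A^\star\sigma=A_4^\infty\sigma$ off the normal-stress jump as in Lemma~\ref{same-tensor} (note the jump condition gives the matrix identity $((A^\star-A_4^\infty)\sigma)\,n=0$ for all $n\in\mathbb{S}$, not a scalar product). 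This is softer and reuses machinery already proved for the general anisotropic case; the paper's route is shorter here only because the Eshelby formulas have already been paid for in the existence step.
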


Before proceeding to the proof, let us motivate the self-consistent equations~\eqref{eq:mu}. Consider the simple case when the tensor $\A$ is constant in $B$ and equal to some $A \in \cM$. In that case, the unique solution to the embedded corrector problem~\eqref{p-corrector} is of course $w_\sigma^{\A,A} = 0$ for any $\sigma \in \cS$ and $\mathcal{E}_{\sigma}^{\A}(A) = \sigma \cdot A \sigma$ in view of~\eqref{elseE_1}. Assuming now that $A$ is isotropic (with Lam\'e coefficients $\lambda$ and $\mu$), we get
\begin{equation} \label{eq:cas_simple}
  \mathcal{E}_{\sigma}^{\A}(A) = 2\mu \, \sigma \cdot \sigma + \lambda \, ({\rm Tr} \, \sigma)^2.
\end{equation}
In order to determine $\mu$ from $\mathcal{E}_{\sigma}^{\A}(A)$, let us consider trace-free matrices $\sigma$. For any $1 \leq i <  j \leq 3$, we get $\mathcal{E}_{\sigma^{ij}}^{\A}(A) = 2\mu \, \sigma^{ij} \cdot \sigma^{ij} = \mu$. This would thus motivate, in the heterogeneous case, a self-consistent equation of the form $\mu_4^N = \mathcal{E}_{\sigma^{ij}}^{\A^N}(A_4^N)$ for any fixed $1 \leq i < j \leq 3$. To obtain an equation independent of the specific choice of the couple $(i,j)$, it is natural to sum the previous relation on the three possible couples $(i,j)$, hence the first equation of~\eqref{eq:mu}.

In order to determine $\lambda$, we return to the homogeneous case~\eqref{eq:cas_simple} and set $\sigma = {\rm Id}$. We then have $\mathcal{E}_{\rm Id}^{\A}(A) = 6\mu+9\lambda$. This motivates, in the heterogeneous case, the second equation of~\eqref{eq:mu}.

\begin{proof}
Instead of working with the Lam\'e coefficients $\mu$ and $\lambda$, we are going to again work with $\mu$ and $\kappa = 2\mu+3\lambda$, as pointed out at the beginning of Section~\ref{sec:strict}. Proving the existence of a tensor $A_4^N \in \mathcal{I}$ with Lam\'e coefficients $\lambda_4^N$ and $\mu_4^N$ satisfying~\eqref{eq:mu} is equivalent to proving the existence of $\mu_4^N \in [\alpha/2, \beta/2]$ and $\kappa_4^N \in [\alpha, \beta]$ such that 
\begin{equation}\label{eq:mukappa}
\mu_4^N = \frac{1}{3} \sum_{1\leq i <  j \leq 3} \mathcal{E}_{\sigma^{ij}}^{\A_N}\left(A_{{\rm iso},2}^{\kappa_4^N,\mu_4^N}\right) \quad \text{and} \quad \kappa_4^N = \frac{1}{3} \mathcal{E}_{\rm Id}^{\A_N}\left(A_{{\rm iso},2}^{\kappa_4^N,\mu_4^N}\right).
\end{equation}
Let us denote by $\overline{A} := A_{{\rm iso},2}^{\beta,\beta/2}$ and by $\underline{A} := A_{{\rm iso},2}^{\alpha,\alpha/2}$. For any $\sigma \in \cS$, we find that
\begin{align*}
  \sigma \cdot \A^N(x) \sigma
  &=
  2\mu^N(x) \, \left( \sigma \cdot \sigma - \frac{1}{3} ({\rm Tr} \, \sigma)^2 \right) + \frac{\kappa^N(x)}{3} \, ({\rm Tr} \, \sigma)^2
  \\
  &=
  2\mu^N(x) \, \left| \sigma - \frac{{\rm Tr} \, \sigma}{3} \, {\rm Id} \right|^2 + \frac{\kappa^N(x)}{3} \, ({\rm Tr} \, \sigma)^2.
\end{align*}
Since $\A^N(x) \in \mathcal{I}$ and in view of~\eqref{eq:borne_I_bis}, we have $ \sigma \cdot \underline{A} \sigma \leq \sigma \cdot \A^N(x) \sigma \leq \sigma \cdot \overline{A} \sigma$ for any $\sigma \in \cS$. 

Let $\kappa \in [\alpha_-, \beta^+]$ and $\mu \in [\alpha_-/2, \beta^+/2]$, where $\alpha_-$ and $\beta^+$ are constants such that Assumption~\ref{ass:fondamental} is satisfied. In view of~\eqref{eq:E}, we deduce from the above estimates that, for any $v \in V_0$ and $\sigma \in \cS$,
$$
 \mathcal{E}_\sigma^{\underline{A},A_{{\rm iso},2}^{\kappa,\mu}}(v) \leq \mathcal{E}_\sigma^{\A^N,A_{{\rm iso},2}^{\kappa,\mu}}(v) \leq \mathcal{E}_\sigma^{\overline{A},A_{{\rm iso},2}^{\kappa,\mu}}(v). 
$$
Minimizing over $v \in V_0$, we hence obtain that
\begin{equation} \label{eq:covid8}
\mathcal{E}_\sigma^{\underline{A}}(A_{{\rm iso},2}^{\kappa,\mu}) \leq \mathcal{E}_\sigma^{\A^N}(A_{{\rm iso},2}^{\kappa,\mu}) \leq \mathcal{E}_\sigma^{\overline{A}}(A_{{\rm iso},2}^{\kappa,\mu}). 
\end{equation}
Let us define
\begin{equation}\label{F1}
F_{\A^N}(\mu,\kappa) := \frac{1}{3} \sum_{1 \leq i <  j \leq 3} \mathcal{E}_{\sigma^{ij}}^{\A^N}(A_{{\rm iso},2}^{\kappa,\mu}) - \mu,
\end{equation}
so that the first equation in~\eqref{eq:mukappa} reads $F_{\A^N}(\mu_4^N,\kappa_4^N) = 0$. We also define
\begin{equation}\label{FdownFup}
F_{\underline{A}}(\mu,\kappa) := \frac{1}{3} \sum_{1 \leq i< j \leq 3} \mathcal{E}_{\sigma^{ij}}^{\underline{A}}(A_{{\rm iso},2}^{\kappa,\mu}) - \mu, \qquad F_{\overline{A}}(\mu,\kappa) := \frac{1}{3} \sum_{1 \leq i < j \leq 3} \mathcal{E}_{\sigma^{ij}}^{\overline{A}}(A_{{\rm iso},2}^{\kappa,\mu}) - \mu.
\end{equation}
We obviously infer from~\eqref{eq:covid8} that
\begin{equation}\label{F-F}
\forall \mu \in [\alpha_-/2,\beta^+/2], \ \ \forall \kappa \in [\alpha_-,\beta^+], \quad  F_{\underline{A}}(\mu,\kappa) \leq F_{\A^N}(\mu,\kappa) \leq F_{\overline{A}}(\mu,\kappa). 
\end{equation}
Similarly, let us define
\begin{equation}\label{eq:G}
G_{\A^N}(\mu,\kappa) := \frac{1}{3} \mathcal{E}_{\rm Id}^{\A^N}\left(A_{{\rm iso},2}^{\kappa,\mu}\right) - \kappa,
\end{equation}
so that the second equation in~\eqref{eq:mukappa} reads $G_{\A^N}(\mu_4^N,\kappa_4^N) = 0$. We similarly introduce $G_{\underline{A}}$ and $G_{\overline{A}}$, and again infer from~\eqref{eq:covid8} that
\begin{equation}\label{G-G}
\forall \mu \in [\alpha_-/2,\beta^+/2], \ \ \forall \kappa \in [\alpha_-,\beta^+], \quad G_{\underline{A}}(\mu,\kappa) \leq G_{\A^N}(\mu,\kappa) \leq G_{\overline{A}}(\mu,\kappa).
\end{equation}
The proof proceeds in two steps.

\medskip


\noindent
{\bf Step 1: existence of a fixed point.} 
To compute $F_{\underline{A}}$, we consider the Eshelby's problem~\eqref{Eshelby-isotropic} with $\sigma = \sigma^{ij}$ for some $1\leq i <j \leq 3$, with the interior isotropic tensor $\underline{A}$ (which corresponds to the Lam\'e coefficients $\mu_0 = \alpha/2$ and $\lambda_0 = 0$) and the exterior isotropic tensor $A_{{\rm iso},2}^{\kappa,\mu}$. Using~\eqref{energy-isotropic_1}, we find
\begin{equation}\label{F_value}
F_{\underline{A}}(\mu,\kappa) = (\alpha/2-\mu) \left( 1 - \frac{2(\alpha/2-\mu)}{\alpha + \frac{8\mu+3\kappa}{6\mu+\kappa} \, \mu} \right),
\end{equation}
and we likewise have
\begin{equation}\label{F_value_bis}
F_{\overline{A}}(\mu,\kappa) = (\beta/2-\mu) \left( 1 - \frac{2(\beta/2-\mu)}{\beta + \frac{8\mu+3\kappa}{6\mu+\kappa} \, \mu} \right).
\end{equation}
Fix $\kappa$ in $[\alpha_-,\beta^+]$. For any $\mu \in [\alpha_-/2,\beta^+/2]$, we have $\dps \alpha + \frac{8\mu+3\kappa}{6\mu+\kappa} \, \mu > \alpha > 2(\alpha/2-\mu)$, hence, in view of~\eqref{F_value}, we have $F_{\underline{A}}(\mu,\kappa) < 0$ for any $\mu \in (\alpha/2,\beta^+/2]$, $F_{\underline{A}}(\mu,\kappa) > 0$ for any $\mu \in [\alpha_-/2,\alpha/2)$ and the equation $F_{\underline{A}}(\mu,\kappa) = 0$ has a unique solution in $[\alpha_-/2,\beta^+/2]$ which is $\mu = \alpha/2$. Likewise, for any $\mu \in [\alpha_-/2,\beta^+/2]$, we have $\dps \beta + \frac{8\mu+3\kappa}{6\mu+\kappa} \, \mu > \beta > 2(\beta/2-\mu)$, hence, in view of~\eqref{F_value_bis}, we have $F_{\overline{A}}(\mu,\kappa) > 0$ for any $\mu \in [\alpha_-/2,\beta/2)$, $F_{\overline{A}}(\mu,\kappa) < 0$ for any $\mu \in (\beta/2,\beta^+/2]$ and the equation $F_{\overline{A}}(\mu,\kappa) = 0$ has a unique solution in $[\alpha_-/2,\beta^+/2]$ which is $\mu = \beta/2$. The bound~\eqref{F-F} thus implies that there exists some $\widehat{\mu}(\kappa) \in [\alpha/2,\beta/2]$ such that $F_{\A^N}(\widehat{\mu}(\kappa),\kappa) = 0$. In addition, in view of $\A^N$ satisfying Assumption~\ref{ass:fondamental} and of the definition~\eqref{F1}, the map $[\alpha_-/2,\beta^+/2] \ni \mu \mapsto F_{\A^N}(\mu,\kappa)$ is strictly concave. For a fixed $\kappa$, the equation $F_{\A^N}(\mu,\kappa) = 0$ thus has a unique solution in $[\alpha/2,\beta/2]$ that we denote $\widehat{\mu}(\kappa) \in [\alpha/2,\beta/2]$ (uniqueness can e.g. be shown by considering the various possibilities for the sign on $[\alpha,\beta]$ of the derivative of $\mu \mapsto F_{\A^N}(\mu,\kappa)$). In addition, the mapping $[\alpha,\beta] \ni \kappa \mapsto \widehat{\mu}(\kappa)$ is continuous.

\medskip

In a similar manner, we compute $G_{\underline{A}}$ by considering the Eshelby's problem~\eqref{Eshelby-isotropic} with $\sigma = {\rm Id}$ with the interior isotropic tensor $\underline{A}$ and the exterior isotropic tensor $A_{{\rm iso},2}^{\kappa,\mu}$. Using~\eqref{energy-isotropic_3}, we find
\begin{equation}\label{G_value}
G_{\underline{A}}(\mu,\kappa) = (\alpha-\kappa) \left( 1 - \frac{\alpha-\kappa}{\alpha+4\mu} \right) 
\end{equation}
and we likewise have
\begin{equation}\label{G_value_bis}
G_{\overline{A}}(\mu,\kappa) = (\beta-\kappa) \left( 1 - \frac{\beta-\kappa}{\beta+4\mu} \right).
\end{equation}
Fix $\mu$ in $[\alpha_-/2,\beta^+/2]$. For any $\kappa \in [\alpha_-,\beta^+]$, we have $\alpha+4\mu > \alpha-\kappa$, hence, in view of~\eqref{G_value}, we have $G_{\underline{A}}(\mu,\kappa) < 0$ for any $\kappa \in (\alpha,\beta^+]$, $G_{\underline{A}}(\mu,\kappa) > 0$ for any $\kappa \in [\alpha_-,\alpha)$ and the equation $G_{\underline{A}}(\mu,\kappa) = 0$ has a unique solution in $[\alpha_-,\beta^+]$ which is $\kappa = \alpha$. Likewise, any $\kappa \in [\alpha_-,\beta^+]$, we have $\beta+4\mu > \beta-\kappa$, hence, in view of~\eqref{G_value_bis}, we have $G_{\overline{A}}(\mu,\kappa) > 0$ for any $\kappa \in [\alpha_-,\beta)$, $G_{\overline{A}}(\mu,\kappa) < 0$ for any $\kappa \in (\beta,\beta^+]$ and the equation $G_{\overline{A}}(\mu,\kappa) = 0$ has a unique solution in $[\alpha_-,\beta^+]$ which is $\kappa = \beta$. The bound~\eqref{G-G} thus implies that there exists some $\widehat{\kappa}(\mu) \in [\alpha,\beta]$ such that $G_{\A^N}(\mu,\widehat{\kappa}(\mu)) = 0$. In addition, in view of $\A^N$ satisfying Assumption~\ref{ass:fondamental} and of the definition~\eqref{eq:G}, the map $[\alpha_-,\beta^+] \ni \kappa \mapsto G_{\A^N}(\mu,\kappa)$ is strictly concave. For a fixed $\mu$, the equation $G_{\A^N}(\mu,\kappa)= 0$ thus has a unique solution in $[\alpha,\beta]$ that we denote $\widehat{\kappa}(\mu) \in [\alpha,\beta]$. In addition, the mapping $[\alpha/2,\beta/2] \ni \mu \mapsto \widehat{\kappa}(\mu)$ is continuous.

\medskip

Consider now the mapping $H: [\alpha/2,\beta/2] \times [\alpha,\beta] \ni (\mu,\kappa) \mapsto (\widehat{\mu}(\kappa),\widehat{\kappa}(\mu))$. Since $H$ is a continuous mapping with values in $[\alpha/2,\beta/2] \times [\alpha,\beta]$, we are in position to use Brouwer's fixed point theorem, which shows that there exists $(\mu_4^N,\kappa_4^N) \in [\alpha/2,\beta/2] \times [\alpha,\beta]$ such that $H(\mu_4^N,\kappa_4^N) = (\mu_4^N,\kappa_4^N)$. By construction, $(\mu_4^N, \kappa_4^N)$ satisfies the fixed point equations~\eqref{eq:mukappa}, which yields the claimed existence result. 

\medskip

\noindent
{\bf Step 2: limit when $N \to \infty$.}
We now prove that $\dps \mu_4^N \mathop{\longrightarrow}_{N\to +\infty} \mu^\star$ and $\dps \kappa_4^N \mathop{\longrightarrow}_{N\to +\infty} \kappa^\star$, where of course $\kappa^\star = 2\mu^\star+3\lambda^\star$. Since the sequence $(\mu_4^N, \kappa_4^N)$ is bounded, there exists $\mu_\infty \in [\alpha/2,\beta/2]$ and $\kappa_\infty \in [\alpha,\beta]$ such that (up to the extraction of a subsequence that we do not write explicitly)
$$
\mu_4^N \mathop{\longrightarrow}_{N\to +\infty} \mu_\infty \quad \mbox{and} \quad \kappa_4^N \mathop{\longrightarrow}_{N\to +\infty} \kappa_\infty. 
$$
Using Lemma~\ref{converge-in-out}, we have, for any $\sigma \in \cS$, that $\dps w_\sigma^{\A^N,A_{{\rm iso},2}^{\kappa_4^N,\mu_4^N}}$ converges to $\dps w_\sigma^{A^\star,A_{{\rm iso},2}^{\kappa_\infty,\mu_\infty}}$, weakly in $H^1_{\rm loc}(\R^3)^3$ and therefore strongly in $L^2(\mathbb{S})^3$. Lemma~\ref{converge-in-out} also yields that
$$
\A^N \left( \sigma + e\left(w_\sigma^{\A^N,A_{{\rm iso},2}^{\kappa_4^N,\mu_4^N}}\right) \right) \mathop{\rightharpoonup}_{N\to +\infty} A^\star \left( \sigma + e\left(w_\sigma^{A^\star,A_{{\rm iso},2}^{\kappa_\infty,\mu_\infty}}\right) \right) \quad \text{weakly in $L^2_{\rm loc}(\R^3)^{3 \times 3}$}.
$$
Using~\eqref{elseE_2}, we thus obtain that
$$
\forall \sigma \in \cS, \qquad \mathcal{E}_\sigma^{\A^N}\left(A_{{\rm iso},2}^{\kappa_4^N,\mu_4^N}\right) \mathop{\longrightarrow}_{N\to +\infty} \mathcal{E}_\sigma^{A^\star}\left(A_{{\rm iso},2}^{\kappa_\infty,\mu_\infty}\right).
$$
We can hence pass to the limit $N \to \infty$ in~\eqref{eq:mukappa} and obtain that
\begin{equation} \label{eq:covid10}
\mu_\infty = \frac{1}{3} \sum_{1\leq i< j \leq 3} \mathcal{E}_{\sigma^{ij}}^{A^\star}\left(A_{{\rm iso},2}^{\kappa_\infty,\mu_\infty} \right) \quad \mbox{and} \quad \kappa_\infty = \frac{1}{3} \, \mathcal{E}_{\rm Id}^{A^\star}\left(A_{{\rm iso},2}^{\kappa_\infty,\mu_\infty} \right).
\end{equation}
We first consider the equation on the left of~\eqref{eq:covid10}. Using the same comparison principle as the one used to obtain~\eqref{eq:covid8}, we note that its right-hand side is an increasing function of $\mu^\star$. Furthermore, using~\eqref{energy-isotropic_1}, this left equation reads
$$
\mu_\infty = \mu^\star - \frac{2(\mu_\infty-\mu^\star)^2}{2\mu^\star + \frac{8\mu_\infty+3\kappa_\infty}{6\mu_\infty+\kappa_\infty} \, \mu_\infty}.
$$
The right-hand side being an increasing function of $\mu^\star$, this equation implies that $\mu^\star = \mu_\infty$.

We next turn to the equation on the right of~\eqref{eq:covid10}. Using again the same comparison principle as the one used to obtain~\eqref{eq:covid8}, we note that its right-hand side is an increasing function of $\kappa^\star$. Furthermore, using~\eqref{energy-isotropic_3}, this right equation reads
$$
\kappa_\infty
=
\kappa^\star - \frac{(\kappa^\star-\kappa_\infty)^2}{\kappa^\star+4\mu_\infty}.
$$
The right-hand side being an increasing function of $\kappa^\star$, this equation implies that $\kappa^\star = \kappa_\infty$. This concludes the proof of Proposition~\ref{self-consistent}.
\end{proof}

\medskip

We conclude this section by observing that the definition~\eqref{tensor1} of $A_1^N$ can also be adapted to the context of isotropic elasticity, by introducing
\begin{equation}\label{tensor1_iso}
A_{1,{\rm iso}}^N \in \underset{A \in \mathcal{I}}{\mathop{\rm argmax}} \ \mathcal{E}^{\A^N}(A), 
\end{equation}
where we recall that $\mathcal{I} \ni A \mapsto \mathcal{E}^{\A^N}(A)$ is defined by~\eqref{E-sum}. In dimension $d=3$, if $\A^N$ satisfies Assumption~\ref{ass:fondamental}, then this mapping is strictly concave (it can be written as the sum of a strictly concave function and of a concave function, in view of Lemma~\ref{concave}). There hence exists a unique $A_{1,{\rm iso}}^N$ in $\mathcal{I}$ satisfying~\eqref{tensor1_iso}. Similarly to~\eqref{tensor2} and~\eqref{tensor3}, we can also introduce $A_{2,{\rm iso}}^N \in \widetilde{\cM}$ defined by
\begin{equation}\label{tensor2_iso}
\forall \sigma \in \cS, \qquad A_{2,{\rm iso}}^N \sigma = \frac{1}{|B|} \int_B \A^N \left(\sigma+ e\left(w_\sigma^{\A^N ,A_{1,{\rm iso}}^N}\right)\right)
\end{equation}
and $A^N_{3,{\rm iso}} \in \widetilde{\cM}$ such that 
\begin{equation}\label{tensor3_iso}
\forall \sigma \in \cS, \qquad \sigma \cdot A^N_{3,{\rm iso}} \sigma = \mathcal{E}_\sigma^{\A^N}(A^N_{1,{\rm iso}}).
\end{equation}
We then have the following proposition (which is not restricted to the case when $d=3$), the proof of which follows the same arguments as the proof of Proposition~\ref{prop:convergence}.

\begin{proposition} \label{prop:convergence_iso}
Let $(\A^N)_{N \in \N} \subset L^\infty(B,\mathcal{M})$ a family of tensors which converges in $B$ in the sense of homogenization to a constant tensor $A^\star\in \cM$. We furthermore assume that $A^\star$ is isotropic, and hence that $A^\star\in \mathcal{I}$. Let $A_{1,{\rm iso}}^N$, $A_{2,{\rm iso}}^N$ and $A_{3,{\rm iso}}^N$ be respectively defined by~\eqref{tensor1_iso}, \eqref{tensor2_iso} and~\eqref{tensor3_iso}. Then, we have 
$$
A_{1,{\rm iso}}^N \mathop{\longrightarrow}_{N\to +\infty} A^\star, \qquad A_{2,{\rm iso}}^N \mathop{\longrightarrow}_{N\to +\infty} A^\star \qquad \mbox{and} \qquad A_{3,{\rm iso}}^N \mathop{\longrightarrow}_{N\to +\infty} A^\star.  
$$
\end{proposition}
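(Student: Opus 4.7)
The plan is to adapt the proof of Proposition~\ref{prop:convergence} essentially verbatim, exploiting two features of the isotropic restriction. First, $\mathcal{I}$ is a closed subset of $\mathcal{M}$, being the intersection of $\mathcal{M}$ with the finite-dimensional subspace of isotropic tensors (parametrized by the two Lam\'e coefficients, and explicitly described for $d=3$ by~\eqref{eq:borne_I}); in particular, $\mathcal{I}$ is bounded in $\mathcal{M}$. Second, the standing hypothesis $A^\star\in\mathcal{I}$ guarantees that $A^\star$ remains an admissible competitor in the restricted maximization~\eqref{tensor1_iso}. These are the only structural changes needed; everything else of the proof of Proposition~\ref{prop:convergence} carries over unchanged.

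For $A_{1,{\rm iso}}^N$, I would first extract, from the bounded sequence $(A_{1,{\rm iso}}^N)\subset\mathcal{I}$, a subsequence converging to some $A_{1,{\rm iso}}^\infty\in\mathcal{I}$ (using closedness of $\mathcal{I}$). Since $A^\star\in\mathcal{I}$ is admissible in~\eqref{tensor1_iso}, one has
$$\sum_{1\leq i\leq j\leq d}\mathcal{E}_{\sigma^{ij}}^{\A^N}(A_{1,{\rm iso}}^N) \geq \sum_{1\leq i\leq j\leq d}\mathcal{E}_{\sigma^{ij}}^{\A^N}(A^\star).$$
Passing to the limit $N\to\infty$ using Lemma~\ref{converge-in-out} and the compactness of the trace $H^1(B)^d\to L^2(\mathbb{S})^d$ to handle the boundary term in~\eqref{elseE_2}, exactly as in the proof of Proposition~\ref{prop:convergence}, together with $w_{\sigma^{ij}}^{A^\star,A^\star}=0$ from Lemma~\ref{same-tensor}, and using~\eqref{elseE_1}, yields
$$\sum_{1\leq i\leq j\leq d}\left(\int_B e(w_{\sigma^{ij}}^{A^\star,A_{1,{\rm iso}}^\infty})\cdot A^\star e(w_{\sigma^{ij}}^{A^\star,A_{1,{\rm iso}}^\infty}) + \int_{\R^d\setminus B} e(w_{\sigma^{ij}}^{A^\star,A_{1,{\rm iso}}^\infty})\cdot A_{1,{\rm iso}}^\infty e(w_{\sigma^{ij}}^{A^\star,A_{1,{\rm iso}}^\infty})\right)\leq 0.$$
Coercivity of $A^\star$ and $A_{1,{\rm iso}}^\infty$ forces $e(w_{\sigma^{ij}}^{A^\star,A_{1,{\rm iso}}^\infty})=0$ in $\R^d$, and Lemma~\ref{lem:use} yields $w_{\sigma^{ij}}^{A^\star,A_{1,{\rm iso}}^\infty}=0$ for all pairs $i\leq j$. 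By linearity in $\sigma$ and the fact that $(\sigma^{ij})_{i\leq j}$ spans $\cS$, this extends to all $\sigma\in\cS$, and Lemma~\ref{same-tensor} then gives $A_{1,{\rm iso}}^\infty=A^\star$. Uniqueness of the subsequential limit implies that the whole sequence $(A_{1,{\rm iso}}^N)$ converges to $A^\star$.

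The convergences of $A_{2,{\rm iso}}^N$ and $A_{3,{\rm iso}}^N$ then follow immediately. Applying Lemma~\ref{converge-in-out} with $A^N=A_{1,{\rm iso}}^N\to A^\star$ gives $w_\sigma^{\A^N,A_{1,{\rm iso}}^N}\rightharpoonup w_\sigma^{A^\star,A^\star}=0$ weakly in $H^1_{\rm loc}(\R^d)^d$ and $\A^N(\sigma+e(w_\sigma^{\A^N,A_{1,{\rm iso}}^N}))\rightharpoonup A^\star\sigma$ weakly in $L^2_{\rm loc}(\R^d)^{d\times d}$; integrating over $B$ yields $A_{2,{\rm iso}}^N\sigma\to A^\star\sigma$ for every $\sigma\in\cS$. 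For $A_{3,{\rm iso}}^N$, invoking~\eqref{elseE_2}, the same weak convergences, and the compactness of the trace on $\mathbb{S}$, one obtains $\sigma^{ij}\cdot A_{3,{\rm iso}}^N\sigma^{ij}=\mathcal{E}_{\sigma^{ij}}^{\A^N}(A_{1,{\rm iso}}^N)\to\sigma^{ij}\cdot A^\star\sigma^{ij}$ for every $i\leq j$, which determines the whole tensor and gives $A_{3,{\rm iso}}^N\to A^\star$. I do not expect any genuine new obstacle compared with Proposition~\ref{prop:convergence}: the isotropy hypothesis on $A^\star$ serves solely to ensure that the restricted maximization~\eqref{tensor1_iso} still admits the true homogenized tensor as a competitor, after which the comparison argument goes through unchanged.
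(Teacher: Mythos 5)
Your proposal is correct and matches the paper, which itself proves Proposition~\ref{prop:convergence_iso} by exactly this route: the paper states that the proof "follows the same arguments as the proof of Proposition~\ref{prop:convergence}", the only structural points being the two you identify (closedness and boundedness of $\mathcal{I}$ so that subsequential limits of $A_{1,{\rm iso}}^N$ stay in $\mathcal{I}$, and the hypothesis $A^\star\in\mathcal{I}$ so that $A^\star$ remains an admissible competitor in~\eqref{tensor1_iso}).
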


\section*{Acknowledgements}

The work of FL is partially supported by ONR under Grant N00014-20-1-2691 and by EOARD under grant FA8655-20-1-7043. FL acknowledges the continuous support from these two agencies. SX gratefully acknowledges the support from Labex MMCD (Multi-Scale Modelling \& Experimentation of Materials for Sustainable Construction; French government grant ANR-11-LABX-0022 managed by ANR within the frame of the national program Investments for the Future). The authors warmly thank Eric Canc\`es for several stimulating discussions in the early stages of this work.

\appendix

\section{Korn inequalities} \label{sec:appendix}

In this appendix, we collect some auxiliary lemmas, related to Korn's inequalities and rigid displacements, which are used throughout the article. We refer to~\cite{ciarlet} for a comprehensive presentation (see also the elegant review in~\cite{DV}). For any domain $D\subset \R^d$, we denote by
\begin{equation}\label{eq:RD}
 \cR(D) := \left\{ v\in H^1(D)^d, \quad e(v) = 0 \ \ \mbox{in $D$} \right\}
\end{equation}
the set of rigid displacements of $D$. 

\medskip

\begin{lemma}[Rigid displacements, see~\cite{ciarlet}] \label{lem:rigid}
Let $D \subset \R^d$ be an open, connex and bounded domain of $\R^d$. Let $u \in H^1(D)^d$. Then, $u\in \cR(D)$ if and only if there exists $b \in \R^d$ and $M \in \R^{d\times d}$ such that $M^T = -M$ and 
$$
\forall x\in D, \quad u(x) = Mx + b.
$$
\end{lemma}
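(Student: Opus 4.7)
The plan is to prove the two implications separately. The forward direction (if $u(x)=Mx+b$ with $M$ antisymmetric, then $u\in\cR(D)$) is a direct computation: $\nabla u(x)=M$ almost everywhere, hence $e(u)=\tfrac12(M+M^T)=0$, and clearly $u\in H^1(D)^d$ since $D$ is bounded.

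For the converse, let $u\in\cR(D)$. The key algebraic identity I plan to use is that for any smooth vector field $v$ and any indices $i,j,k$, one has
\begin{equation*}
\partial^2_{ij}v_k = \partial_i e_{jk}(v) + \partial_j e_{ik}(v) - \partial_k e_{ij}(v).
\end{equation*}
This identity transfers to $H^1$ functions in the distributional sense: applied to $u\in H^1(D)^d$, it shows that $\partial^2_{ij}u_k \in H^{-1}(D)$ can be written as a sum of distributional derivatives of the components of $e(u)\in L^2(D)^{d\times d}$. Since $e(u)=0$ by assumption, we obtain $\partial^2_{ij}u_k=0$ in $\mathcal{D}'(D)$ for every $i,j,k$. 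In particular, each $\partial_j u_k$ is a distribution whose gradient vanishes; since $D$ is connected, this yields that each $\partial_j u_k$ equals some constant $M_{kj}\in\R$ almost everywhere. In turn, for each component $u_k$, the distributional gradient $\nabla u_k$ equals the constant vector $(M_{k1},\dots,M_{kd})$ almost everywhere, and connectedness of $D$ again implies that there exists $b_k\in\R$ such that $u_k(x)=\sum_{j=1}^d M_{kj}x_j + b_k$ for almost every $x\in D$. Setting $M=(M_{kj})_{1\le k,j\le d}$ and $b=(b_k)_{1\le k\le d}$, we obtain $u(x)=Mx+b$ almost everywhere in $D$. Finally, $e(u)=\tfrac12(M+M^T)=0$ forces $M^T=-M$.

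The main subtlety I anticipate is the low regularity: $u$ is only $H^1$, so the identity giving the second derivatives in terms of $e(u)$ must be read in $\mathcal{D}'(D)$ (or equivalently in $H^{-1}(D)$), not pointwise. Once this is justified by testing against $C_c^\infty(D)$ functions and integrating by parts, the conclusion that a distribution on a connected open set with vanishing gradient must be (a.e. equal to) a constant is classical. The boundedness of $D$ is not used in the proof itself but ensures that the affine function $Mx+b$ indeed lies in $H^1(D)^d$. I will cite~\cite{ciarlet} for the rigorous statement of the distributional argument so as to keep the presentation concise.
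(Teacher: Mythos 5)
Your proof is correct. The paper itself does not prove Lemma~\ref{lem:rigid} --- it only cites~\cite{ciarlet} --- and your argument (the identity $\partial^2_{ij}u_k = \partial_i e_{jk}(u) + \partial_j e_{ik}(u) - \partial_k e_{ij}(u)$ read in $\mathcal{D}'(D)$, forcing all second distributional derivatives to vanish, then connectedness to conclude $u$ is affine and $e(u)=0$ to force skew-symmetry) is precisely the standard proof found in that reference, with the low-regularity subtlety correctly identified and handled.
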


\begin{lemma}[See~\cite{ciarlet}] \label{simple}
Let $D\subset \R^d$ be an open domain and let $u\in L^2_{\rm loc}(D)^d$ so that $\nabla u \in L^2(D)^{d\times d}$. Then, we have 
$$
\|e(u)\|_{L^2(D)^{d\times d}} \leq \|\nabla u \|_{L^2(D)^{d \times d}}, \qquad \| {\rm div} \ u \|_{L^2(D)} \leq \sqrt{d} \ \|\nabla u \|_{L^2(D)^{d \times d}}. 
$$
\end{lemma}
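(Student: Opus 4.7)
The plan is to establish both inequalities pointwise almost everywhere on $D$ and then integrate, since the $L^2$ norms on both sides are just integrals of the pointwise squared norms. Given that $u \in L^2_{\rm loc}(D)^d$ with $\nabla u \in L^2(D)^{d \times d}$, all the partial derivatives $\partial_j u_i$ are well-defined in $L^2(D)$ and the symmetric gradient $e(u)$ and divergence $\text{div}\,u$ inherit their $L^2$ regularity automatically.

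For the first inequality, I would expand $|e(u)(x)|^2 = \sum_{i,j} e_{ij}(u)(x)^2 = \frac{1}{4} \sum_{i,j} \left( \partial_j u_i + \partial_i u_j \right)^2$ and apply the elementary bound $(a+b)^2 \leq 2(a^2+b^2)$ with $a = \partial_j u_i(x)$ and $b = \partial_i u_j(x)$. This yields
$$
|e(u)(x)|^2 \leq \frac{1}{2} \sum_{i,j} \left[ (\partial_j u_i)^2 + (\partial_i u_j)^2 \right] = \sum_{i,j} (\partial_j u_i)^2 = |\nabla u(x)|^2,
$$
where the last equality uses the symmetry of the double sum under swapping $i$ and $j$. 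Integrating this pointwise estimate over $D$ and taking square roots gives the first claimed inequality.

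For the second inequality, I would write $\text{div}\, u(x) = \sum_{i=1}^d \partial_i u_i(x)$ and apply the Cauchy--Schwarz inequality in $\R^d$ to the vector $(\partial_1 u_1, \ldots, \partial_d u_d)$ against $(1,\ldots,1)$:
$$
|\text{div}\, u(x)|^2 = \left( \sum_{i=1}^d \partial_i u_i(x) \right)^2 \leq d \sum_{i=1}^d (\partial_i u_i(x))^2 \leq d \sum_{i,j=1}^d (\partial_j u_i(x))^2 = d \, |\nabla u(x)|^2.
$$
Integrating over $D$ and taking square roots yields the second bound with the constant $\sqrt{d}$.

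Both steps are entirely elementary and there is no substantive obstacle: the only things to be careful about are the algebraic identity $(a+b)^2 \leq 2(a^2+b^2)$ and a single application of Cauchy--Schwarz in $\R^d$, together with the observation that the local integrability assumption on $u$ combined with $\nabla u \in L^2(D)^{d \times d}$ makes all integrals in sight finite and well-defined.
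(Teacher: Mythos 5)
Your proof is correct. The paper does not prove this lemma but simply cites~\cite{ciarlet}, and your argument is the standard elementary one: the pointwise bound $|e(u)|^2 \leq |\nabla u|^2$ via $(a+b)^2 \leq 2(a^2+b^2)$ and the symmetry of the double sum, and $|{\rm div}\, u|^2 \leq d\,|\nabla u|^2$ via Cauchy--Schwarz, both integrated over $D$.
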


In this article, we only need the Korn's inequality in $H_0^1(D)$ (and not in $H^1(D)$), which is stated as follows:




\begin{lemma}[Korn's inequality in $H_0^1$, see~\cite{ciarlet}]
\label{Korn2}
Let $D \subset \R^d$ be a regular domain. For any $u \in H_0^1(D)$, we have
$$
\|\nabla u\|_{L^2(D)^{d\times d}} \leq \sqrt{2} \ \|e( u) \|_{L^2(D)^{d\times d}}.
$$
\end{lemma}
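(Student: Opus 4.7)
The plan is to establish this Korn inequality via a direct identity relating $\|e(u)\|^2$, $\|\nabla u\|^2$ and $\|\mathrm{div}\,u\|^2$, and then to use the boundary condition $u=0$ on $\partial D$ in a crucial way to rewrite a cross term as $\|\mathrm{div}\,u\|_{L^2(D)}^2$.

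First I would argue by density: it suffices to prove the inequality for $u \in C_c^\infty(D)^d$ and then extend by continuity to $u \in H_0^1(D)^d$, since both sides of the claimed inequality are continuous with respect to the $H^1$ norm and $C_c^\infty(D)^d$ is dense in $H_0^1(D)^d$. For $u \in C_c^\infty(D)^d$, I would expand
\begin{equation*}
  4\, e_{ij}(u)\, e_{ij}(u) = (\partial_j u_i + \partial_i u_j)(\partial_j u_i + \partial_i u_j) = 2\,\partial_j u_i\, \partial_j u_i + 2\, \partial_j u_i\, \partial_i u_j,
\end{equation*}
(summation over $i,j$ implicit), which after integration over $D$ yields the identity
\begin{equation*}
  2\int_D |e(u)|^2 = \int_D |\nabla u|^2 + \int_D \partial_j u_i\, \partial_i u_j.
\end{equation*}

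The key step is then to transform the cross term. Integrating by parts twice in the cross term, using that $u$ (and hence all its derivatives transported via integration by parts) vanishes on $\partial D$ because $u \in C_c^\infty(D)^d$, I get
\begin{equation*}
  \int_D \partial_j u_i \, \partial_i u_j \;=\; -\int_D u_i\, \partial_j \partial_i u_j \;=\; \int_D \partial_i u_i\, \partial_j u_j \;=\; \int_D (\mathrm{div}\, u)^2 \;\geq\; 0.
\end{equation*}
Substituting this back gives $2 \|e(u)\|^2_{L^2(D)^{d\times d}} = \|\nabla u\|^2_{L^2(D)^{d\times d}} + \|\mathrm{div}\, u\|^2_{L^2(D)} \geq \|\nabla u\|^2_{L^2(D)^{d\times d}}$, which is exactly the claimed inequality (with constant $\sqrt{2}$) for smooth compactly supported $u$.

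The main (minor) obstacle is making the integration by parts rigorous: one must be sure that no boundary terms are produced. This is automatic for $u \in C_c^\infty(D)^d$ because $u$ is compactly supported in $D$; the regularity assumption on $D$ plays no role in this particular inequality, only the $H_0^1$ condition does. Extending from $C_c^\infty(D)^d$ to $H_0^1(D)^d$ by density then concludes the proof.
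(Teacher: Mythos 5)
Your proof is correct: the identity $2\int_D |e(u)|^2 = \int_D |\nabla u|^2 + \int_D (\mathrm{div}\,u)^2$ for $u \in C_c^\infty(D)^d$, obtained by the double integration by parts on the cross term, is exactly the classical argument for Korn's first inequality, and the density extension to $H_0^1(D)^d$ is sound. The paper does not prove this lemma itself but simply cites the literature, and your argument is the standard one found there; your remark that the regularity of $D$ is irrelevant here (only the $H_0^1$ condition matters) is also accurate.
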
 
\bibliographystyle{plain}
\bibliography{elasticity_FL}

\end{document}